\documentclass{siamart1116}



\usepackage{lipsum}
\usepackage{amsfonts}
\usepackage{graphicx}
\usepackage{epstopdf}
\usepackage{algorithmic}
\ifpdf
  \DeclareGraphicsExtensions{.eps,.pdf,.png,.jpg}
\else
  \DeclareGraphicsExtensions{.eps}
\fi

\numberwithin{theorem}{section}

\newcommand{\TheTitle}{Monotonicity in inverse medium scattering on unbounded domains}
\newcommand{\RunningTitle}{Monotonicity in inverse medium scattering}  
\newcommand{\TheAuthors}{R. Griesmaier and B. Harrach} 

\headers{\RunningTitle}{\TheAuthors}

\title{\TheTitle}

\author{
  Roland Griesmaier\thanks{Institut f\"ur Angewandte und Numerische
    Mathematik, Karlsruher Institut f\"ur Technologie, 76049 Karlsruhe, Germany
    (\email{roland.griesmaier@kit.edu}).} 
  \and
  Bastian Harrach\thanks{Institut f\"ur Mathematik
    Universit\"at Frankfurt, 60325 Frankfurt am Main, Germany
    (\email{harrach@math.uni-frankfurt.de}).}
}

\usepackage{amsmath,amssymb}
\usepackage{mathrsfs} 
\usepackage{bm}
\usepackage{stmaryrd}



\theorembodyfont{\normalfont}

\newtheorem{example}[theorem]{Example}
\newtheorem{remark}[theorem]{Remark}

\numberwithin{equation}{section}
\numberwithin{figure}{section}
\numberwithin{theorem}{section}

\theoremstyle{nonumberplain}
\theoremheaderfont{\normalfont\itshape}
\theorembodyfont{\normalfont}
\theoremseparator{.}
\theoremsymbol{}







\newcommand{\Lt}{{L^2}}
\newcommand{\Linfty}{{L^{\infty}}}
\newcommand{\LinftyC}{{L^\infty_{0,+}}}
\newcommand{\Heins}{{H^1}}

\newcommand{\LtSd}{{L^2(\Sd)}}
\newcommand{\LinftyCRd}{{\LinftyC(\Rd)}}
\newcommand{\Heinsloc}{{H^1_\loc(\Rd)}}

\newcommand{\field}[1]{{\mathbb{#1}}}
\newcommand{\C}{\field{C}}
 
\newcommand{\N}{\field{N}}
\newcommand{\R}{\field{R}}

\newcommand{\Dcal}{\mathcal{D}}

\newcommand{\Rcal}{\mathcal{R}}
\newcommand{\Scal}{\mathcal{S}}

\newcommand{\bs}{\boldsymbol}

\newcommand{\bfA}{{\bs A}}

\newcommand{\bfF}{{\bs F}}

\newcommand{\bfI}{{\bs I}}

\newcommand{\bfS}{{\bs S}}
\newcommand{\bfT}{{\bs T}}

\newcommand{\loc}{{\mathrm{loc}}}

\newcommand{\ol}[1]{\overline{#1}}

\newcommand{\tm}{\subseteq} 
\newcommand{\di}{\partial}

\newcommand{\ds}{\, \dif s}

\newcommand{\dx}{\, \dif x}
\newcommand{\dy}{\, \dif y}

\newcommand{\xhat}{\widehat{x}}
\newcommand{\yhat}{\widehat{y}}

\newcommand{\gtilde}{{\widetilde g}}

\newcommand{\Vtilde}{{\widetilde V}}

\newcommand{\rmi}{\mathrm{i}}

\newcommand{\eps}{\varepsilon}

\newcommand{\uinfty}{u^\infty}
\newcommand{\ui}{u^i}
\newcommand{\us}{u^s}

\newcommand{\Sd}{{S^{d-1}}}
\newcommand{\Rd}{{\R^d}}


\newcommand{\qmin}{q_\mathrm{min}}
\newcommand{\qmax}{q_\mathrm{max}}
\newcommand{\qminE}{q_{\mathrm{min},E}}
\newcommand{\qmaxE}{q_{\mathrm{max},E}}

\newcommand{\BR}{{B_R(0)}}

\newcommand{\leqfin}{\leq_{\mathrm{fin}}}
\newcommand{\geqfin}{\geq_{\mathrm{fin}}}
\newcommand{\leqd}{\leq_{r}}
\newcommand{\geqd}{\geq_{r}}

\newcommand{\Vperp}{{V^\perp}}

\DeclareMathAlphabet{\mathbi}{\encodingdefault}{\rmdefault}{\bfdefault}{\itdefault}
\DeclareRobustCommand{\vec}[1]{\ifmmode\mathbi{#1}\else\textbf{\textit{#1}}\fi}

\DeclareMathOperator{\dif}{d\!}

\DeclareMathOperator{\supp}{supp}


\DeclareMathOperator{\real}{Re}

\DeclareMathOperator{\sign}{sign}

\DeclareMathOperator{\sinc}{sinc}


\usepackage[resetlabels]{multibib}
\newcites{erratum}{References}
\renewcommand*{\thefootnote}{\fnsymbol{footnote}}

\ifpdf
\hypersetup{
  pdftitle={\TheTitle},
  pdfauthor={\TheAuthors}
}
\fi


\externaldocument{ex_supplement}


\begin{document}

\maketitle

\let\thefootnote\relax\footnotetext{\hrule \vspace{1ex} \centering This is a preprint version of a journal article published in\\
 \emph{SIAM J. Appl. Math.} \textbf{78}(5), 2533--2557, 2018
(\url{https://doi.org/10.1137/18M1171679}).
}

\begin{abstract}
  We discuss a time-harmonic inverse scattering problem for the
  Helmholtz equation with compactly supported penetrable and possibly
  inhomogeneous scattering objects in an unbounded homogeneous
  background medium, and we develop a monotonicity relation for the far
  field operator that maps superpositions of incident plane waves to
  the far field patterns of the corresponding scattered waves. 
  We utilize this monotonicity relation to establish novel 
  characterizations of the support of the scattering objects in terms
  of the far field operator.
  These are related to and extend corresponding results known from
  factorization and linear sampling methods to determine the support
  of unknown scattering objects from far field observations of
  scattered fields.
  An attraction of the new characterizations is that they only
  require the refractive index of the scattering objects to be above
  or below the refractive index of the background medium locally and
  near the boundary of the scatterers. 
  An important tool to prove these results are so-called localized
  wave functions that have arbitrarily large norm in some prescribed
  region while at the same time having arbitrarily small norm in some
  other prescribed region. 
  We present numerical examples to illustrate our theoretical
  findings. 
\end{abstract}

\begin{keywords}
   Inverse scattering, Helmholtz equation, monotonicity,
   far field operator, inhomogeneous medium
\end{keywords}

\begin{AMS}
 35R30, 65N21
\end{AMS}

\section{Introduction}
\label{sec:Introduction}
Accurately recovering the location and the shape of unknown scattering
objects from far field observations of scattered acoustic or
electromagnetic waves is a basic but severely ill-posed inverse
problem in remote sensing, and in the past twenty years 
efficient qualitative reconstruction methods for this purpose have
received a lot of attention (see, e.g., 
\cite{cakoni2014qualitative,CakColMon11,colton2013inverse,kirsch2008factorization,potthast2006survey}
and the references therein). 
In this work we develop a new approach for this shape
reconstruction problem that is based on a \emph{monotonicity relation}
for the far field operator that maps superpositions of incident plane
waves, which are being scattered at the unknown scattering objects,
to the far field patterns of the corresponding scattered waves.  
Throughout we assume that the scattering objects are penetrable,
non-absorbing, and possibly inhomogeneous.

The new monotonicity relation generalizes similar results for the
Neumann-to-Dirichlet map for the Laplace equation on bounded domains
that have been established in~\cite{harrach2013monotonicity}, where
they have been utilized to justify and extend an earlier monotonicity
based reconstruction scheme for electrical impedance tomography
developed in~\cite{tamburrino2002new}, using so-called localized
potentials introduced in \cite{gebauer2008localized}. 
This is also related to corresponding estimates for the Laplace equation 
developed in \cite{ikehata1998size,kang1997inverse}. 
The analysis from \cite{harrach2013monotonicity} has recently been
extended for the Neumann-to-Dirichlet operator for the Helmholtz
equation on bounded domains in \cite{harrach2017monotonicity}, and the
main contribution of the present work is the generalization of these 
results to the inverse medium scattering problem on unbounded domains
with plane wave incident fields and far field observations of the
scattered waves. 

The monotonicity relation for the far field operator essentially
states that the real part of a suitable unitary transform of the
difference of two far field operators corresponding to two different
inhomogeneous media is positive or negative semi-definite up to a
finite dimensional subspace, if the difference of the corresponding
refractive indices is either non-negative or non-positive pointwise
almost everywhere. 
This can be translated into criteria and algorithms for shape
reconstruction by comparing a given (or observed) far field operator
to various  virtual (or simulated) far field operators corresponding to
a sufficiently small or large index of refraction on some probing
domains to decide whether these probing domains are contained inside
the support of the unknown scattering objects or whether the probing
domains contain the unknown scattering objects.
In fact the situation is even more favorable, since it turns out to
be sufficient to compare the given far field operator to linearized
versions of the probing far field operators, i.e., Born far field
operators, which can be simulated numerically very efficiently. 
An advantage of these new characterizations is that they only require
the refractive index of the scattering object to be above or below the
refractive index of the background medium locally and near the
boundary of the scatterers, i.e., they apply to a large class of
so-called indefinite scatterers.

Besides the monotonicity relation, the
second main ingredient of our analysis are so-called \emph{localized
  wave functions}, which are special solutions to scattering problems
corresponding to suitably chosen incident waves that have arbitrarily
large norm on some prescribed region $B\tm\Rd$, while at the same time
having arbitrarily small norm on a different prescribed region $D\tm\Rd$,
assuming that $\Rd\setminus\ol{D}$ is connected and $B\not\tm D$. 
This generalizes corresponding results on so-called localized
potentials for the Laplace equation established in
\cite{gebauer2008localized}. 
The arguments that we use to prove the existence of such localized
wave functions are inspired by the analysis of the factorization
method (see \cite{Bru01,Kir98,kirsch1999factorization,kirsch2002music}
for the origins of the method and
\cite{hanke2011sampling,harrach2013recent,kirsch2008factorization} for
recent overviews),  
and of the linear sampling method for the inverse medium scattering
problem (see,
e.g.,~\cite{cakoni2014qualitative,CakColMon11,colton1996simple}).  

It is interesting to note that 
the characterizations of the support of the scattering objects in
terms of the far field operator developed in this work are independent
of so-called transmission eigenvalues (see, e.g.,
\cite{cakoni2014qualitative,CakColHad16,colton2013inverse} and
\cite{Lec09}). 
On the other hand, the monotonicity relation for the far field
operator is somewhat related to well-known monotonicity principles for
the phases of the eigenvalues of the so-called scattering operator,
which have been discussed, e.g., in \cite{kirsch2013inside}, where
they have actually been utilized to characterize transmission
eigenvalues. 
The latter have recently been extended to monotonicity relations for
the difference of far field operators in
\cite{lakshtanov2016difference} that are closely related to our
results.
Our work substantially extends the results in
\cite{lakshtanov2016difference}, using very different analytical
tools. 

For further recent contributions on monotonicity based reconstruction
methods for various inverse problems for partial differential equations we
refer to
\cite{barth2017detecting,brander2017monotonicity,garde2017comparison,garde2017convergence,garde2017regularized,harrach2015combining,harrach2017monotonicity_fractional,harrach2016enhancing,harrach2016monotonicity,harrach2015resolution,maffucci2016novel,su2017monotonicity,tamburrino2016monotonicity,ventre2017design,zhou2017monotonicity}.
We further note that this approach has also been utilized to obtain
theoretical uniqueness results for inverse problems (see, e.g., 
\cite{arnold2013unique,harrach2009uniqueness,harrach2012simultaneous,harrach2010exact,harrach2017local}). 

The outline of this article is as follows.  
After briefly introducing the mathematical setting of the scattering
problem in Section~\ref{sec:DirectProblem}, we develop the
monotonicity relation for the far field operator in
Section~\ref{sec:Monotonicity}.  
In Section~\ref{sec:Localized} we discuss the existence of localized
wave functions for the Helmholtz equation in unbounded domains, and we
use them to provide a converse of the monotonicity relation from
Section~\ref{sec:Monotonicity}.
In Section~\ref{sec:ShapeReconstruction} we establish rigorous
characterizations of the support of scattering objects in terms of
the far field operator.
An efficient and suitably regularized numerical implementation of these
criteria is beyond the scope of this article, but we discuss a
preliminary algorithm and two numerical examples for the sign-definite
case (i.e., when the refractive index of the scattering objects is either
above or below the refractive index of the background medium) in
Section~\ref{sec:NumericalExamples} to illustrate our theoretical
findings.
This preliminary algorithm cannot be considered competitive when
compared against state-of-the-art implementations of linear sampling
or factorization methods, but, as outlined in our final remarks, this
may change in the future.

\section{Scattering by an inhomogeneous medium}
\label{sec:DirectProblem}
We use the Helmholtz equation as a simple model for the propagation of
time-harmonic acoustic or electromagnetic waves in an isotropic
non-absorbing inhomogeneous medium in $\Rd$, $d=2,3$.
Assuming that the inhomogeneity is compactly supported, the 
\emph{refractive index} can be written as  $n^2=1+q$ with a
real-valued \emph{contrast function} $q\in\LinftyCRd$, where 
$\LinftyCRd$ denotes the space of compactly supported
$L^\infty$-functions satisfying $q>-1$ a.e.\ on $\Rd$.  

The wave motion caused by an \emph{incident field} $\ui$
satisfying 
\begin{equation}
  \label{eq:HelmholtzUi}
  \Delta \ui + k^2 \ui \,=\, 0 \qquad \text{in } \Rd, 
\end{equation}
with \emph{wave number} $k>0$, that is being scattered at the
inhomogeneous medium is described by the \emph{total field} $u_q$,
which is a superposition 
\begin{subequations}
  \label{eq:ScatteringProblem}  
  \begin{equation}
    \label{eq:ScatteredField}
    u_q \,=\, \ui+\us_q
  \end{equation}
  of the incident field and the \emph{scattered field} $\us_q$ such
  that the Helmholtz equation 
  \begin{equation}
    \label{eq:Helmholtz}
    \Delta u_q + k^2n^2 u_q \,=\, 0 \qquad \text{in } \Rd \,
  \end{equation}
  is satisfied together with the
  \emph{Sommerfeld radiation condition} 
  \begin{equation}
    \label{eq:Sommerfeld}
    \lim_{r\to\infty} r^{\frac{d-1}2} 
    \Bigl(\frac{\di \us_q}{\di r}(x)-\rmi k \us_q(x)\Bigr) 
    \,=\, 0 \,, \qquad r=|x| \,,
  \end{equation}
\end{subequations}
uniformly with respect to all directions $x/|x|\in\Sd$.

\begin{remark}
  Throughout this work, Helmholtz equations are always to be
  understood in distributional (or weak) sense. 
  For instance, $u_q\in\Heinsloc$ is a solution to
  \eqref{eq:Helmholtz} if and only if  
  \begin{equation*}
    \int_\Rd (\nabla u_q \cdot\nabla v - k^2 n^2 u_q v) \dx \,=\, 0
    \qquad \text{for all $v\in C_0^\infty(\Rd)$\,.} 
  \end{equation*}
  Accordingly, standard regularity results yield smoothness of $u_q$
  and $\us_q$ in $\Rd\setminus\ol{\BR}$, where $\BR$ is a ball
  containing the support of the contrast function $\supp(q)$, and the
  entire solution $\ui$ is smooth throughout $\Rd$. 
  In particular the Sommerfeld radiation condition
  \eqref{eq:Sommerfeld} is well defined.\footnote{As usual, we call
    a (weak) solution to a Helmholtz equation on an unbounded domain
    that satisfies the Sommerfeld radiation condition a
    \emph{radiating solution}.}
  \hfill$\lozenge$
\end{remark}

\begin{lemma}
  Suppose that the incident field $\ui\in\Heinsloc$ satisfies
  \eqref{eq:HelmholtzUi}, then the scattering problem
  \eqref{eq:ScatteringProblem} has a unique solution $u_q\in\Heinsloc$. 
  Furthermore, the scattered field $\us_q=u_q-\ui\in\Heinsloc$ has the
  asymptotic behavior 
  \begin{equation}
    \label{eq:FarfieldExpansion}
    \us_q(x) = C_d \frac{e^{\rmi k |x|}}{|x|^{\frac{d-1}{2}}} 
    \uinfty_q(\xhat) + O(|x|^{-\frac{d+1}{2}}) \,, \qquad 
    |x| \to \infty \,,
  \end{equation}
  uniformly in all directions $\xhat:=x/|x|\in\Sd$,
  where
  \begin{equation}
    \label{eq:DefCd}
    C_d \,=\, {e^{\rmi\pi/4}}/{\sqrt{8\pi k}}
    \quad \text{if $n=2$} \qquad \text{and} \qquad
    C_d \,=\, 1/(4\pi) \quad \text{if $n=3$}\,,
  \end{equation}
  and the \emph{far field pattern} $\uinfty_q$ is given by
  \begin{equation}
    \label{eq:FarfieldPattern}
    \uinfty_q(\xhat) 
    \,=\, \int_{\di B_R(0)} \Bigl( \us_q(y) 
    \frac{\di e^{-\rmi k \xhat\cdot y}}{\di\nu_y} 
    - e^{-\rmi k \xhat\cdot y} 
    \frac{\di \us_q}{\di\nu}(y) \Bigr) \ds(y) \,,
    \qquad \xhat\in\Sd \,.
  \end{equation}
\end{lemma}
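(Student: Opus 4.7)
The plan is to split the statement into (i) well-posedness of the scattering problem and (ii) the far field expansion, and to treat them by classical Lippmann--Schwinger / Green's representation arguments.

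For (i) I would reformulate the scattering problem as an integral equation. Choose a ball $\BR$ containing $\supp(q)$ and let $\Phi_k$ denote the radiating fundamental solution of the Helmholtz equation in $\Rd$. Any $u_q \in \Heinsloc$ that is a distributional solution of $\Delta u_q + k^2 n^2 u_q = 0$ with $u_q-\ui$ radiating satisfies the Lippmann--Schwinger equation
\begin{equation*}
u_q(x) \,=\, \ui(x) - k^2 \int_\Rd \Phi_k(x,y)\, q(y)\, u_q(y) \dy, \qquad x\in\Rd,
\end{equation*}
and conversely, any $\L{2}$-solution of this equation on $\BR$ extends by the right-hand side to an $\Heinsloc$-solution of \eqref{eq:ScatteringProblem}. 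The volume potential is compact on $L^2(\BR)$, so the Fredholm alternative applies and existence reduces to uniqueness of the homogeneous problem ($\ui=0$). For uniqueness, I would test the homogeneous problem with $u_q$ on $\BR$, use the jump/normal trace on $\di\BR$ and the Sommerfeld radiation condition to deduce $\int_{\Sd}|\uinfty_q|^2\dsigma = 0$ in the standard way, invoke Rellich's lemma (together with the connectedness of $\Rd\setminus\ol{\BR}$) to conclude $u_q\equiv 0$ outside $\BR$, and finally apply unique continuation for the Helmholtz equation with $L^\infty$ potential $k^2 n^2$ to conclude $u_q\equiv 0$ throughout $\Rd$.

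For (ii), since $\supp(q)\tm\BR$, the scattered field $\us_q$ is a radiating solution of the homogeneous Helmholtz equation on $\Rd\setminus\ol{\BR}$, hence smooth there by interior regularity. Green's representation formula yields, for $|x|>R$,
\begin{equation*}
\us_q(x) \,=\, \int_{\di\BR}\Bigl(\us_q(y)\frac{\di \Phi_k(x,y)}{\di\nu_y} - \Phi_k(x,y)\frac{\di \us_q}{\di\nu}(y)\Bigr)\ds(y).
\end{equation*}
Inserting the classical asymptotic expansion
\begin{equation*}
\Phi_k(x,y) \,=\, C_d\,\frac{e^{\rmi k|x|}}{|x|^{(d-1)/2}}\,e^{-\rmi k\xhat\cdot y} + O(|x|^{-(d+1)/2}),
\end{equation*}
differentiated once in $y$ for the first term, uniformly for $y\in\di\BR$ and $\xhat\in\Sd$, and collecting the leading-order contribution gives precisely \eqref{eq:FarfieldExpansion} with $\uinfty_q$ as in \eqref{eq:FarfieldPattern}.

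The part I expect to be most delicate is the uniqueness step, since $n^2=1+q\in L^\infty$ is merely bounded (and $n^2$ may vanish on a set of positive measure when $q=-1$ there), so one must appeal to a unique continuation result valid for the Helmholtz equation with $L^\infty$ potential rather than to the classical Carleman estimates for smooth coefficients. Everything else is a routine application of Fredholm theory, Rellich's lemma, and the asymptotics of $\Phi_k$.
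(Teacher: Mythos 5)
Your argument is correct and is essentially the standard one that the paper simply delegates to the literature: the paper's proof consists of citing \cite[Thm.~8.7]{colton2013inverse} for unique solvability (which is proved there exactly via the Lippmann--Schwinger equation and the Riesz--Fredholm theory, with uniqueness from Rellich's lemma and unique continuation) and \cite[Thm.~2.6]{colton2013inverse} for the far field asymptotics (proved there via Green's representation and the expansion of the fundamental solution, just as you do). Your remark that unique continuation must be taken in a form valid for $L^\infty$ coefficients is apt and consistent with the paper's later reference to \cite[Sec.~2.3]{harrach2017monotonicity} for precisely this point.
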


\begin{proof}
  The unique solvability follows, e.g., immediately
  from~\cite[Thm.~8.7]{colton2013inverse} (see also 
  \cite[Thm.~6.9]{kirsch2011introduction}), and the farfield
  asymptotics are, e.g., shown in~\cite[Thm.~2.6]{colton2013inverse}. 
\end{proof}

For the special case of a \emph{plane wave incident field} 
$\ui(x;\theta):=e^{\rmi k\theta\cdot x}$, we explicitly indicate the
dependence on the \emph{incident direction} $\theta\in\Sd$ by a second 
argument, and accordingly we write 
$u_q(\cdot;\theta)$, $\us_q(\cdot;\theta)$, and 
$\uinfty_q(\cdot;\theta)$ for the corresponding scattered field, total
field, and far field pattern, respectively. 
As usual, we collect the far field patterns $\uinfty_q(\xhat;\theta)$
for all possible observation and incident directions
$\xhat,\theta\in\Sd$ in the \emph{far field operator}
\begin{equation}
  \label{eq:FarfieldOperator}
  F_q: \LtSd \to \LtSd \,,\quad
  (F_qg)(\xhat)
  := \int_{\Sd} \uinfty_q(\xhat;\theta) g(\theta) \ds(\theta) \,,
\end{equation}
which is compact and normal (see, e.g.,
\cite[Thm.~3.24]{colton2013inverse}). 
Moreover, the \emph{scattering operator} is defined by
\begin{equation}
  \label{eq:ScatteringOperator}
  \Scal_q: \LtSd \to \LtSd \,,\quad
  \Scal_q g := (I+2\rmi k|C_d|^2F_q)g \,,
\end{equation}
where $C_d$ is again the constant from \eqref{eq:DefCd}.
The operator $\Scal_q$ is unitary, and consequently the eigenvalues of
$F_q$ lie on the circle of radius $1/(2k|C_d|^2)$ centered in
$\rmi/(2k|C_d|^2)$ in the complex plane (cf.,
e.g.,~\cite[pp.~285--286]{colton2013inverse}). 

By linearity, for any given function $g\in\Lt(\Sd)$, the solution to
the direct scattering problem~\eqref{eq:ScatteringProblem} with
incident field 
\begin{subequations}
  \label{eq:Herglotz}
  \begin{equation}
    \label{eq:HerglotzA}
    \ui_g(x) 
    \,=\, \int_\Sd e^{\rmi k x\cdot\theta} g(\theta) \ds(\theta) \,, 
    \qquad x\in\Rd \,,
  \end{equation}
  is given by 
  \begin{equation}
    \label{eq:HerglotzB}
    u_{q,g}(x) 
    \,=\, \int_\Sd u_q(x;\theta) g(\theta) \ds(\theta) \,, 
    \qquad x\in\Rd \,,
  \end{equation}
  and the corresponding scattered field
  \begin{equation}
    \label{eq:HerglotzC}
    \us_{q,g}(x) 
    \,=\, \int_\Sd \us_q(x;\theta) g(\theta) \ds(\theta) \,, 
    \qquad x\in\Rd \,,
  \end{equation}
  has the far field pattern $\uinfty_{q,g}=F_qg$ satisfying
  \begin{equation}
    \label{eq:HerglotzD}
    \uinfty_{q,g}(\xhat) 
    \,=\, \int_{\di B_R(0)} \Bigl( \us_{q,g}(y) 
    \frac{\di e^{-\rmi k \xhat\cdot y}}{\di\nu_y} 
    - e^{-\rmi k \xhat\cdot y} 
    \frac{\di \us_{q,g}}{\di\nu}(y) \Bigr) \ds(y) \,,
    \qquad \xhat\in\Sd \,.
  \end{equation}
\end{subequations}
Incident fields as in \eqref{eq:HerglotzA} are usually called 
\emph{Herglotz wave functions}.

\section{A monotonicity relation for the far field operator}
\label{sec:Monotonicity}
We will frequently be discussing relative orderings compact
self-adjoint operators. 
The following extension of the Loewner order was introduced in
\cite{harrach2017monotonicity}. 
Let  $A,B:X\to X$ be two compact self-adjoint linear operators on a
Hilbert space $X$. 
We write
\begin{equation*}
  A \,\leqd\, B \qquad \text{for some $r\in\N$} \,,
\end{equation*}
if $B-A$ has at most $r$ negative eigenvalues.
Similarly, we write $A\leqfin B$ if $A\leqd B$ holds for some
$r\in\N$, and the notations $A\geqd B$ and $A\geqfin B$ are defined
accordingly. 

The following result was shown in
\cite[Cor.~3.3]{harrach2017monotonicity}. 

\begin{lemma}
  \label{lmm:leqfin}
  Let $A,B:X\to X$ be two compact self-adjoint linear operators on a
  Hilbert space $X$ with scalar product $\langle\cdot,\cdot\rangle$,
  and let $r\in\N$.
  Then the following statements are equivalent:
  \begin{itemize}
  \item[(a)] $A\leqd B$
  \item[(b)] There exists a finite-dimensional subspace $V\tm X$ with
    $\dim(V)\leq r$ such that
    \begin{equation*}
      \langle (B-A)v,v\rangle \,\geq\, 0 
      \qquad\text{for all } v\in\Vperp \,.  
    \end{equation*}
  \end{itemize}
\end{lemma}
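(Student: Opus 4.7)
The plan is to set $K := B - A$, which is again compact and self-adjoint on $X$, and to apply the spectral theorem for compact self-adjoint operators. This produces an (at most countable) orthonormal system of eigenvectors $\{e_j\}$ with real eigenvalues $\{\lambda_j\}$ whose only possible accumulation point is $0$, together with $\ker K$, yielding an orthogonal decomposition of $X$. The number of strictly negative $\lambda_j$ (counted with multiplicity) is exactly the quantity controlling $\leqd$.

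For the direction (a) $\Rightarrow$ (b), I would take $V$ to be the span of all eigenvectors corresponding to strictly negative eigenvalues; by hypothesis this has dimension at most $r$. Any $v \in \Vperp$ then decomposes, via Parseval's identity, only into the remaining eigenvectors (those with $\lambda_j \geq 0$) and into $\ker K$, so
\[
\langle K v, v\rangle \,=\, \sum_{\lambda_j \geq 0} \lambda_j |\langle v, e_j\rangle|^2 \,\geq\, 0,
\]
which is precisely the inequality required in (b).

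For the converse (b) $\Rightarrow$ (a), I would argue by contraposition. Suppose $K$ has at least $r+1$ negative eigenvalues (counted with multiplicity), and let $W \subseteq X$ be the $(r+1)$-dimensional span of corresponding eigenvectors. Given any finite-dimensional subspace $V \subseteq X$ with $\dim(V) \leq r$, the restriction to $W$ of the orthogonal projection onto $V$ is a linear map from a space of dimension $r+1$ into a space of dimension at most $r$, and therefore has nontrivial kernel. Any nonzero $w$ in that kernel lies in $W \cap \Vperp$ and satisfies
\[
\langle K w, w\rangle \,=\, \sum_j \lambda_j |\langle w, e_j\rangle|^2 \,<\, 0,
\]
the sum running over the $r+1$ chosen negative eigenvalues and being strictly negative because $w \neq 0$. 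This contradicts (b) for the given $V$, so no finite-dimensional $V$ of dimension at most $r$ can satisfy the inequality, and the contrapositive yields (a).

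The only point requiring a little care is selecting the right form of the spectral decomposition so as to accommodate eigenvalue multiplicities and a possibly nontrivial $\ker K$; beyond that, both implications reduce to Parseval's identity together with the elementary observation that a linear map between finite-dimensional spaces of decreasing dimensions has nontrivial kernel. I do not anticipate any genuine analytic obstacle.
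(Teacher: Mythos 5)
Your proof is correct. Note that the paper does not prove Lemma~\ref{lmm:leqfin} at all but imports it from \cite[Cor.~3.3]{harrach2017monotonicity}, and the argument given there is essentially the one you reconstruct: diagonalize $K=B-A$ by the spectral theorem, take $V$ to be the span of the eigenvectors with negative eigenvalues for (a)$\Rightarrow$(b), and for the converse use rank--nullity to find a nonzero vector of an $(r+1)$-dimensional negative-definite eigenspace sum inside $\Vperp$. The only point worth making explicit is that ``at most $r$ negative eigenvalues'' in the definition of $\leqd$ must be read with multiplicities counted, exactly as you assume; since nonzero eigenvalues of a compact operator have finite multiplicity, your selection of $r+1$ orthonormal eigenvectors is always possible and both implications go through.
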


In particular this lemma shows that $\leqfin$ and $\geqfin$ are
transitive relations (see \cite[Lmm.~3.4]{harrach2017monotonicity})
and thus preorders. 
We use this notation in the following \emph{monotonicity relation} for
the far field operator. 

\begin{theorem}
  \label{thm:Monotonicity}
  Let $q_1,q_2\in\LinftyCRd$.
  Then there exists a finite-dimensional subspace $V\tm\LtSd$ such that
  \begin{equation}
    \label{eq:Monotonicity1a}
    \real\Bigl(\int_\Sd g\, \ol{\Scal_{q_1}^*(F_{q_2}-F_{q_1})g} \ds\Bigr)
    \,\geq\, k^2 \int_{\Rd} (q_2-q_1) |u_{q_1,g}|^2 \dx 
    \quad \text{for all $g\in \Vperp$} \,.
  \end{equation}
  In particular
  \begin{equation}
    \label{eq:Monotonicity1b}
    q_1 \leq q_2 \qquad \text{implies that} \qquad 
    \real(\Scal_{q_1}^*F_{q_1}) 
    \leqfin \real(\Scal_{q_1}^*F_{q_2}) \,,
  \end{equation}
  where as usual the real part of a linear operator
  $A:X\to X$ on a Hilbert space $X$ is the self-adjoint operator given
  by $\real(A):=\frac12(A+A^*)$.
\end{theorem}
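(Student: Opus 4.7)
I plan to prove the theorem in three steps: derive an exact identity for the left-hand side of \eqref{eq:Monotonicity1a} via Green's formula, isolate the defect from the desired lower bound as a residual quadratic form, and invoke Lemma~\ref{lmm:leqfin} after showing that the residual is sign-definite up to finitely many directions.

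\textbf{Step 1: The exact identity.} Set $u_j := u_{q_j,g}$, $u^s_j := u_j - \ui_g$, and pick $R$ large enough that $\BR \supseteq \supp q_1 \cup \supp q_2$. Green's second identity applied to $u_1$ and $\overline{u_2}$ on $\BR$, combined with the Helmholtz equations for $u_1$ and $\overline{u_2}$ (the latter using that the contrasts are real-valued), yields
\begin{equation*}
\int_{\di\BR}\left(\overline{u_2}\,\di_\nu u_1 - u_1\,\di_\nu \overline{u_2}\right)\ds \,=\, k^2\int_{\Rd}(q_2 - q_1)\,u_1 \overline{u_2}\dx.
\end{equation*}
I would split $u_j = \ui_g + u^s_j$ on $\di\BR$ and expand the boundary integral into four bilinear pieces. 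The (inc--inc) term vanishes because $\ui_g$ is an entire Helmholtz solution. The two (inc--scat) terms reduce via the far field representation \eqref{eq:FarfieldPattern} (and its complex conjugate) to $-\langle F_{q_1}g, g\rangle$ and $\langle F_{q_2}^* g, g\rangle$. The (scat--scat) term equals $2\rmi k|C_d|^2\langle F_{q_1}g, F_{q_2}g\rangle$ by the Sommerfeld asymptotics in the limit $R\to\infty$, which is in fact attained for every admissible $R$ by $R$-independence of the left-hand side. Adding the resulting identity to its complex conjugate, taking real parts, and identifying the left-hand side using $\Scal_{q_1}^* = I - 2\rmi k|C_d|^2 F_{q_1}^*$ produces the exact identity
\begin{equation*}
\real\!\int_{\Sd} g\,\overline{\Scal_{q_1}^*(F_{q_2}-F_{q_1})g}\ds \,=\, k^2\int_{\Rd}(q_2-q_1)\,\real\!\left(u_{q_1,g}\,\overline{u_{q_2,g}}\right)\dx.
\end{equation*}

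\textbf{Step 2: Residual form and spectral reduction.} Writing $u_{q_2,g} = u_{q_1,g} + w_g$ and using $\real(u_1\overline{u_2}) = |u_1|^2 + \real(u_1\overline{w_g})$, inequality \eqref{eq:Monotonicity1a} becomes equivalent to $Q(g)\geq 0$ for $g\in\Vperp$, where
\begin{equation*}
Q(g) \,:=\, k^2\int_{\Rd}(q_2-q_1)\,\real(u_{q_1,g}\,\overline{w_g})\dx.
\end{equation*}
Fix a bounded open set $D$ containing $\supp q_1\cup\supp q_2$. The map $g\mapsto \ui_g|_D$ is compact from $\LtSd$ into $L^2(D)$ (in fact it factors through $C^\infty(D)$ by interior analyticity of Herglotz waves), and composing with the bounded Lippmann--Schwinger solution operators yields compact maps $g\mapsto u_{q_j,g}|_D$ and $g\mapsto w_g|_D$. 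Hence the real-valued continuous quadratic form $Q$ corresponds to a compact self-adjoint operator $\mathcal{Q}$ on $\LtSd$. By Lemma~\ref{lmm:leqfin}, the desired finite-dimensional $V$ exists if and only if $\mathcal{Q}$ has only finitely many negative eigenvalues.

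\textbf{Step 3: Main obstacle and conclusion.} The hard part will be this spectral bound on $\mathcal{Q}$. Using the PDE $(\Delta + k^2(1+q_2))w_g = -k^2(q_2-q_1)u_{q_1,g}$ with Green's formula on $\BR$ and letting $R\to\infty$ (the boundary integral $\real\int_{\di\BR}\overline{w_g}\,\di_\nu w_g\,\ds$ has purely imaginary leading asymptotic by the Sommerfeld condition), one rewrites, for any bounded Lipschitz $\Omega\supseteq\supp q_2$,
\begin{equation*}
Q(g) \,=\, \int_\Omega\left(|\grad w_g|^2 - k^2(1+q_2)|w_g|^2\right)\dx - \real\langle \Lambda_{\mathrm{ext}}\,\gamma w_g,\gamma w_g\rangle_{\di\Omega},
\end{equation*}
where $\gamma$ is the trace onto $\di\Omega$ and $\Lambda_{\mathrm{ext}}\colon H^{1/2}(\di\Omega)\to H^{-1/2}(\di\Omega)$ is the exterior Dirichlet-to-Neumann map for Sommerfeld radiating solutions on $\Rd\setminus\overline{\Omega}$. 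The bulk term is the indefinite Helmholtz quadratic form on the bounded domain $\Omega$, which has finite Morse index, and the symmetric boundary term is a bounded perturbation that preserves this property; the right-hand side therefore has only finitely many negative directions as a form on $H^1(\Omega)$. Pulled back along the continuous map $g\mapsto w_g|_\Omega\in H^1(\Omega)$, this shows $\mathcal{Q}$ has only finitely many negative eigenvalues, as required, and Lemma~\ref{lmm:leqfin} produces the finite-dimensional $V$, completing the proof of \eqref{eq:Monotonicity1a}. The claim \eqref{eq:Monotonicity1b} is then immediate: under $q_1\leq q_2$ the right-hand side of \eqref{eq:Monotonicity1a} is nonnegative, so the compact self-adjoint operator $\real(\Scal_{q_1}^* F_{q_2})-\real(\Scal_{q_1}^* F_{q_1})$ has nonnegative quadratic form on $\Vperp$, and a second application of Lemma~\ref{lmm:leqfin} yields $\real(\Scal_{q_1}^* F_{q_1})\leqfin\real(\Scal_{q_1}^* F_{q_2})$.
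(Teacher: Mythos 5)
Your Steps 1 and 2 are correct and run parallel to the paper's argument: your exact identity obtained by Green's formula and the splitting $u_j=\ui_g+\us_{q_j,g}$ is a compact reformulation of Lemma~\ref{lmm:useful_relations}, Lemma~\ref{lmm:Monotonicity1} and the remark containing \eqref{eq:RewriteRealF}, and the reduction of \eqref{eq:Monotonicity1a} to the statement that the residual form $Q$, equivalently
\begin{equation*}
  Q(g)\,=\,\int_\Omega\bigl(|\grad w_g|^2-k^2(1+q_2)|w_g|^2\bigr)\dx
  -\real\int_{\di\Omega}\ol{w_g}\,\frac{\di w_g}{\di\nu}\ds ,\qquad w_g=\us_{q_2,g}-\us_{q_1,g},
\end{equation*}
has only finitely many negative directions, followed by the pull-back along $g\mapsto w_g|_\Omega$ and Lemma~\ref{lmm:leqfin}, is exactly the structure of Lemma~\ref{lmm:Monotonicity3} and of the final proof. (Minor slip: $\Omega$ must contain $\supp(q_1)\cup\supp(q_2)$, not just $\supp(q_2)$, for $w_g$ to be a radiating Helmholtz solution outside $\Omega$ and for the volume integrals to localize.)

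The genuine gap is in Step 3, at the assertion that ``the symmetric boundary term is a bounded perturbation that preserves'' the finite Morse index of the bulk form. Bounded self-adjoint perturbations do not preserve finite Morse index: the form $\|v\|_{\Heins}^2$ minus a compact form has finite Morse index, but subtracting in addition $2\|v\|_{\Heins}^2$ (a bounded symmetric perturbation) destroys it. So boundedness of $v\mapsto\real\langle\Lambda_{\mathrm{ext}}\gamma v,\gamma v\rangle$ proves nothing; what is needed is that the boundary term is compact as a quadratic form on $H^1(\Omega)$, or sign-definite modulo a compact form, and neither is automatic, since the trace $\gamma:H^1(\Omega)\to H^{1/2}(\di\Omega)$ is not compact and $\Lambda_{\mathrm{ext}}$ is an operator of order one. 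This is precisely the point the paper's proof is engineered around: because $w_g$ solves the homogeneous Helmholtz equation in the annulus $\BR\setminus\ol{B_{R-\eps}(0)}$, its Cauchy data on $\di\BR$ factor through the bounded map $N_\eps:H^1(\BR)\to L^2(\di\BR)$ and the \emph{compact} exterior Neumann-to-Dirichlet operator $\Lambda$ on $L^2(\di\BR)$, so the boundary contribution $\real(N_\eps^*\Lambda N_\eps)$ is compact on $H^1(\BR)$ and the whole residual is ``identity minus compact'', whence the finite-dimensional eigenspace $W$ in Lemma~\ref{lmm:Monotonicity3}. To repair your argument without this device you would have to, e.g., split $\Lambda_{\mathrm{ext}}$ into the exterior Dirichlet-to-Neumann map of the Laplacian, whose contribution to $-\real\langle\Lambda_{\mathrm{ext}}\gamma v,\gamma v\rangle$ is nonnegative, plus a difference that is smoothing and hence gives a compact form; as written, the finite Morse index of the full form on $H^1(\Omega)$ is unproven, and with it the existence of the finite-dimensional space $V$.
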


\begin{remark}
  \label{rem:Monotonicity2}
  Since the scattering operators $\Scal_1$ and $\Scal_2$ are unitary,
  we find using~\eqref{eq:ScatteringOperator} that 
  \begin{equation*}
    \begin{split}
      &\Scal_{q_1}^*(F_{q_2}-F_{q_1})
      \,=\, \frac1{2\rmi k|C_d|^2} 
      \Scal_{q_1}^*(\Scal_{q_2}-\Scal_{q_1})
      \,=\, \frac1{2\rmi k|C_d|^2} (\Scal_{q_1}^*\Scal_{q_2}-I)\\
      &\,=\, \Bigl( \frac1{2\rmi k|C_d|^2} 
      (I-\Scal_{q_2}^*\Scal_{q_1}) \Bigr)^*
      \,=\, \Bigl( \frac1{2\rmi k|C_d|^2} 
      \Scal_{q_2}^*(\Scal_{q_2}-\Scal_{q_1})  \Bigr)^*
      \,=\, \bigl( \Scal_{q_2}^*(F_{q_2}-F_{q_1}) \bigr)^* \,.
    \end{split}
  \end{equation*}
  Recalling that the eigenvalues of a compact linear operator and of its
  adjoint are complex conjugates of each other, we conclude that the
  spectra of $\real(\Scal_{q_1}^*(F_{q_2}-F_{q_1}))$ and 
  $\real(\Scal_{q_2}^*(F_{q_2}-F_{q_1}))$ coincide. 
  Consequently, the monotonicity relations
  \eqref{eq:Monotonicity1a}--\eqref{eq:Monotonicity1b} remain true, if
  we replace $\Scal_{q_1}^*$ by $\Scal_{q_2}^*$ in these formulas.
  \hfill$\lozenge$
\end{remark}

Interchanging the roles of $q_1$ and $q_2$, except for $\Scal_{q_1}^*$
(see Remark~\ref{rem:Monotonicity2}), we may restate 
Theorem~\ref{thm:Monotonicity} as follows.

\begin{corollary}
  \label{cor:Monotonicity}
  Let $q_1,q_2\in\LinftyCRd$. 
  Then there exists a finite-dimensional subspace ${V\tm\LtSd}$ such
  that 
  \begin{equation}
    \label{eq:Monotonicity1c}
    \real\Bigl(\int_\Sd g\, \ol{\Scal_{q_1}^* (F_{q_2}-F_{q_1})g} \ds\Bigr)
    \,\leq\, k^2 \int_{\Rd} (q_2-q_1) |u_{q_2,g}|^2 \dx 
    \quad \text{for all $g\in \Vperp$} \,.
  \end{equation}
\end{corollary}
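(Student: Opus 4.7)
The plan is to reduce this statement directly to Theorem~\ref{thm:Monotonicity}, applied with the roles of $q_1$ and $q_2$ interchanged, and then to compensate for the remaining asymmetry in the scattering operator by means of Remark~\ref{rem:Monotonicity2}. No new analysis of the scattering problem is required; the derivation is purely formal.

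First I apply Theorem~\ref{thm:Monotonicity} with $q_1$ and $q_2$ swapped. This provides a finite-dimensional subspace $V\tm\LtSd$ such that
\begin{equation*}
  \real\Bigl(\int_\Sd g\,\ol{\Scal_{q_2}^*(F_{q_1}-F_{q_2})g}\ds\Bigr)
  \,\geq\, k^2\int_\Rd (q_1-q_2)|u_{q_2,g}|^2\dx
  \qquad\text{for all } g\in\Vperp\,.
\end{equation*}
Multiplying this inequality by $-1$ flips its direction, replaces $F_{q_1}-F_{q_2}$ by $F_{q_2}-F_{q_1}$ on the left, and $q_1-q_2$ by $q_2-q_1$ on the right, while preserving the total field $u_{q_2,g}$.

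Next I invoke Remark~\ref{rem:Monotonicity2}, which identifies $\Scal_{q_1}^*(F_{q_2}-F_{q_1})$ and $\Scal_{q_2}^*(F_{q_2}-F_{q_1})$ as adjoints of one another. Since the quadratic form $g\mapsto\real\langle Ag,g\rangle$ coincides for $A$ and $A^*$, the left-hand side of the inequality just obtained is unchanged when $\Scal_{q_2}^*$ is replaced by $\Scal_{q_1}^*$. This produces exactly~\eqref{eq:Monotonicity1c} on the same subspace $V$.

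There is essentially no obstacle: the only point that requires a little care is the asymmetric bookkeeping. On the right-hand side the swap of $q_1$ and $q_2$ must be carried out fully, so that the total field appearing is $u_{q_2,g}$ rather than $u_{q_1,g}$; on the left-hand side, however, the corresponding swap of $\Scal^*$ is cancelled by the adjoint identity from the remark, which leaves $\Scal_{q_1}^*$ in place. This is precisely the content of the parenthetical ``except for $\Scal_{q_1}^*$'' in the sentence preceding the corollary.
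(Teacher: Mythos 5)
Your proposal is correct and is exactly the paper's argument: the authors obtain the corollary by ``interchanging the roles of $q_1$ and $q_2$, except for $\Scal_{q_1}^*$'' and citing Remark~\ref{rem:Monotonicity2}, which is precisely your swap-plus-adjoint-identity reduction. Your observation that $\real\langle A^*g,g\rangle=\real\langle Ag,g\rangle$ pointwise in $g$ correctly supplies the detail needed to transfer the quadratic-form inequality (not just the spectral statement) from $\Scal_{q_2}^*$ to $\Scal_{q_1}^*$.
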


\begin{remark}
  \label{rem:Monotonicity1}
  A well known monotonicity principle for the phases of the
  eigenvalues of the far field operator, which has been discussed,
  e.g., in \cite[Lmm.~4.1]{kirsch2013inside}, can be rephrased as 
  $\real(F_q)\geqfin0$ if $q>0$ and $\real(F_q)\leqfin0$ if $q<0$
  a.e.\ on the support of the contrast function $\supp(q)$.
  This result can now also be obtained as a special case of
  \eqref{eq:Monotonicity1a} in Theorem~\ref{thm:Monotonicity} with
  $q_1=0$ and $q_2=q$ if $q>0$ (or $q_1=q$ and $q_2=0$ and
  $\Scal_{q_1}^*$ replaced by $\Scal_{q_2}^*$ (see
  Remark~\ref{rem:Monotonicity2}) if $q<0$). 

  The monotonicity relation \eqref{eq:Monotonicity1b}, which is a
  consequence of the stronger result \eqref{eq:Monotonicity1a}, has
  already been established in
  \cite[Lmm.~3]{lakshtanov2016difference}, using rather different
  techniques.~$\lozenge$
\end{remark}

The proof of Theorem~\ref{thm:Monotonicity} is a simple corollary of
the following lemmas. 
We begin by summarizing some useful identities for the solution of the
scattering problem \eqref{eq:ScatteringProblem}.

\begin{lemma}
  \label{lmm:useful_relations}
  Let $q\in\LinftyCRd$, $n^2=1+q$, and let $\BR$ be a ball containing
  $\supp(q)$.  Then 
  \begin{equation}
    \label{eq:useful_1}
    \int_\Sd g\, \ol{F_{q}g}\ds 
    \,=\, k^2 \int_\BR q \ui_{g} \ol{u_{q,g}} \dx
    \qquad \text{for all } g\in\LtSd \,,
  \end{equation}
  and, for any $v\in H^1(\BR)$,
  \begin{equation}
    \label{eq:useful_2}
    \int_\BR \bigl(\nabla\us_{q,g}\cdot \nabla v - k^2n^2 \us_{q,g} v
    \bigr) \dx - \int_{\di\BR} v \frac{\di\us_{q,g}}{\di\nu}\ds
    \,=\, k^2 \int_\BR q\ui_{g} v\dx \,.    
  \end{equation}

  Furthermore, if $q_1,q_2\in\LinftyCRd$ and $\BR$ is a ball
  containing $\supp(q_1)\cup \supp(q_2)$, then, for any 
  $j,l\in \{1,2\}$,
  \begin{equation}
    \label{eq:useful_3}
    \int_{\di\BR} \Bigl(
    \us_{q_j,g} \ol{\frac{\di\us_{q_l,g}}{\di\nu}}
    - \ol{\us_{q_l,g}} \frac{\di\us_{q_j,g}}{\di\nu} \Bigr) \ds
    \,=\, -2\rmi k|C_d|^2 \int_\Sd F_{q_j}g\, \ol{F_{q_l}g} \ds \,,
  \end{equation}
  where $C_d$ denotes the constant from \eqref{eq:DefCd}.
\end{lemma}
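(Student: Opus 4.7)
The plan is to prove the three identities in succession, each as a direct application of Green's identities to suitable pairs of functions satisfying Helmholtz equations.

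I would first establish \eqref{eq:useful_2}, which is essentially the weak formulation of the equation satisfied by $\us_{q,g}$. Writing $u_{q,g}=\ui_g+\us_{q,g}$, subtracting $\Delta\ui_g+k^2\ui_g=0$ from $\Delta u_{q,g}+k^2 n^2 u_{q,g}=0$, and using $n^2=1+q$ yields in distributional sense $\Delta\us_{q,g}+k^2 n^2\us_{q,g}=-k^2 q\ui_g$ on $\Rd$. Testing this against an arbitrary $v\in H^1(\BR)$ and applying Green's first identity on $\BR$, which produces the boundary term $\int_{\di\BR}v\,\frac{\di\us_{q,g}}{\di\nu}\ds$, immediately yields \eqref{eq:useful_2}.

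For \eqref{eq:useful_1}, I would start from the representation \eqref{eq:HerglotzD} for $\uinfty_{q,g}=F_qg$, take its complex conjugate, pair with $g(\xhat)$, and interchange the order of integration so that the Herglotz wave $\ui_g(y)=\int_\Sd g(\theta)e^{\rmi k\theta\cdot y}\ds(\theta)$ appears inside the surface integral. This recasts $\int_\Sd g\,\ol{F_qg}\ds$ as $\int_{\di\BR}\bigl(\ol{\us_{q,g}}\frac{\di\ui_g}{\di\nu}-\ui_g\,\ol{\frac{\di\us_{q,g}}{\di\nu}}\bigr)\ds$. Applying Green's second identity on $\BR$ to the pair $(\ui_g,\ol{\us_{q,g}})$ converts this boundary Wronskian into a volume integral of $\ui_g\Delta\ol{\us_{q,g}}-\ol{\us_{q,g}}\Delta\ui_g$; substituting $\Delta\ui_g=-k^2\ui_g$ together with the complex conjugate of the equation for $\us_{q,g}$ makes the $n^2$-terms cancel and leaves exactly $-k^2 q\,\ui_g(\ol{\ui_g}+\ol{\us_{q,g}})=-k^2 q\,\ui_g\,\ol{u_{q,g}}$, which, after accounting for the sign produced by Green's second identity, yields \eqref{eq:useful_1}.

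Finally, for \eqref{eq:useful_3} I would use that outside $\BR$ both $\us_{q_j,g}$ and $\us_{q_l,g}$ satisfy $\Delta u+k^2 u=0$, so Green's second identity implies that the left-hand side of \eqref{eq:useful_3} is unchanged when $\di\BR$ is replaced by $\di B_\rho$ for any $\rho>R$. Passing to the limit $\rho\to\infty$ and substituting \eqref{eq:FarfieldExpansion} for both scattered fields together with the matching expansion $\frac{\di\us_{q,g}}{\di r}(x)=\rmi k\,C_d\frac{e^{\rmi k|x|}}{|x|^{(d-1)/2}}\uinfty_{q,g}(\xhat)+O(|x|^{-(d+1)/2})$ for the radial derivatives (which follows from combining the Sommerfeld condition with \eqref{eq:FarfieldExpansion}), the oscillating phases cancel pairwise, the $\rho^{d-1}$ growth of the surface measure exactly balances the $\rho^{-(d-1)}$ decay of the leading cross terms, and all remainder cross products are $O(\rho^{-1})$; the surviving contribution is precisely $-2\rmi k|C_d|^2\int_\Sd F_{q_j}g\,\ol{F_{q_l}g}\ds$. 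No step poses a serious obstacle; the only point demanding care is the bookkeeping of signs and complex conjugates in \eqref{eq:useful_1}, where swapping the order of arguments in Green's second identity flips the sign of the Wronskian and must be matched against the sign convention in \eqref{eq:HerglotzD}.
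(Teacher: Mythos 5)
Your proposal is correct and follows essentially the same route as the paper: the weak form of $\Delta\us_{q,g}+k^2n^2\us_{q,g}=-k^2q\,\ui_g$ via Green's first identity for \eqref{eq:useful_2}, Green's second identity applied to the far-field representation \eqref{eq:HerglotzD} for \eqref{eq:useful_1}, and Green's identity on the annulus $B_\rho\setminus\ol{\BR}$ combined with the Sommerfeld condition and the expansion \eqref{eq:FarfieldExpansion} for \eqref{eq:useful_3}. The only (cosmetic) difference is in \eqref{eq:useful_1}, where you integrate over $\theta$ first to assemble the Herglotz wave $\ui_g$ and then apply Green's identity to the pair $(\ui_g,\ol{\us_{q,g}})$, whereas the paper applies Green's identity to each plane wave $e^{-\rmi k\theta\cdot y}$ before integrating in $\theta$; both yield the same cancellation.
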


\begin{proof}
  Let $g\in\LtSd$, then the scattered  field $\us_{q,g}\in
  \Heinsloc$ from \eqref{eq:HerglotzC} solves
  \begin{equation*}
    \Delta \us_{q,g} + k^2 n^2 \us_{q,g} 
    \,=\, -\Delta \ui_{q,g} - k^2 n^2 \ui_{q,g}
    \,=\, - k^2 q \ui_{q,g} \qquad \text{in $\BR$}\,,
  \end{equation*}
  and accordingly Green's formula shows that, for any $v\in H^1(\BR)$,
  \begin{equation*}
    \begin{split}
      \int_{\BR}\!\! \nabla \us_{q,g}\cdot \nabla v \dx 
      &\,=\, \int_{\di\BR}\!\! v \frac{\di\us_{q,g}}{\di\nu} \ds
      + k^2\! \int_{\BR}\!\! n^2 \us_{q,g} v \dx 
      + k^2\! \int_{\BR}\!\! q \ui_{q,g} v \dx \,,
    \end{split}
  \end{equation*}
  which proves \eqref{eq:useful_2}.

  Likewise, we obtain from \eqref{eq:HerglotzD} and Green's formula
  that 
  \begin{equation*}
    \begin{split}
      \uinfty_{q,g}(\theta)
      &\,=\, \int_{\di\BR} \Bigl( \us_{q,g}(y) 
      \frac{\di e^{-\rmi k \theta\cdot y}}{\di\nu_y}
      - e^{-\rmi k \theta\cdot y} \frac{\di\us_{q,g}}{\di\nu}(y) \Bigr) 
      \ds(y)\\
      &\,=\, k^2 \int_\BR q(y) u_{q,g}(y) e^{-\rmi k \theta\cdot y} \dy \,,
    \end{split}
  \end{equation*}
  and thus
  \begin{equation*}
\begin{split}
    \int_\Sd g\, \ol{F_{q}g} \ds 
    &\,=\, k^2 \int_\BR  q(y) \ol{u_{q,g}(y)} 
    \int_\Sd g(\theta) e^{\rmi k \theta\cdot y} \ds(\theta) \dy \,. 
\end{split}
  \end{equation*}
  Using \eqref{eq:HerglotzA} this shows \eqref{eq:useful_1}.

  To see \eqref{eq:useful_3} let $r>R$, then
  $\us_{q_j,g},\us_{q_l,g}\in \Heinsloc$ solve 
  (for $q=q_j$ and $q=q_l$)
  \begin{equation*}
    \Delta \us_{q,g} + k^2 \us_{q,g} 
    \,=\, 0 
    \qquad \text{in $B_r(0)\setminus \ol{\BR}$}\,,
  \end{equation*}
  and applying Green's formula we obtain that
  \begin{equation}
    \label{eq:ProofUseful6}
      \int_{\di B_r(0)}\!\! \Bigl(
      \us_{q_j,g}\ol{\frac{\di\us_{q_l,g}}{\di\nu}} 
      -\ol{\us_{q_l,g}}\frac{\di\us_{q_j,g}}{\di\nu} 
      \Bigr) \ds
      \,=\,
      \int_{\di\BR}\!\! \Bigl(
      \us_{q_j,g}\ol{\frac{\di\us_{q_l,g}}{\di\nu}} 
      - \ol{\us_{q_l,g}}\frac{\di\us_{q_j,g}}{\di\nu} 
      \Bigr) \ds \,.
  \end{equation}
  Using the radiation condition \eqref{eq:Sommerfeld} and the far field
  expansion \eqref{eq:FarfieldExpansion} (for $q=q_j$ and $q=q_l$) we
  find that, as $r\to\infty$,   
  \begin{equation}
    \label{eq:ProofUseful8}
    \begin{split}
      \int_{\di B_r(0)} \Bigl(
      \us_{q_j,g}\ol{\frac{\di\us_{q_l,g}}{\di\nu}} 
      - \ol{\us_{q_l,g}}\frac{\di\us_{q_j,g}}{\di\nu} \Bigr) \ds
      &\,=\, -2 \rmi k \int_{\di B_r(0)} 
      \us_{q_j,g} \ol{\us_{q_l,g}} \ds + o(1)\\ 
      &\,=\, -2 \rmi k |C_d|^2 \int_{\Sd} 
      F_{q_j}g \,\ol{F_{q_l}g} \ds + o(1) \,.
    \end{split}
  \end{equation}
  Substituting \eqref{eq:ProofUseful8} into \eqref{eq:ProofUseful6} and
  letting $r\to\infty$ finally gives \eqref{eq:useful_3}.
\end{proof}

The next tool we will use to prove the monotonicity relation for the
far field operator in Theorem~\ref{thm:Monotonicity} is the following
integral identity.

\begin{lemma}
  \label{lmm:Monotonicity1}
  Let $q_1,q_2\in\LinftyCRd$ and let $\BR$ be a ball containing
  $\supp(q_1)\cup \supp(q_2)$. 
  Then, for any $g\in\LtSd$,
  \begin{multline}
    \int_\Sd\!\! (g \, \ol{F_{q_2} g} - \ol{g}\,  F_{q_1} g)\!\ds
    + 2\rmi k|C_d|^2\!\! \int_\Sd\!\!\! F_{q_1}g\, \ol{F_{q_2}g}\!\ds
    + k^2\!\!\! \int_\Rd\!\! (q_1-q_2)|u_{q_1,g}|^2 \!\dx\\
    \,=\, \int_\BR(|\nabla(\us_{q_2,g}-\us_{q_1,g})|^2
    -k^2n_2^2|\us_{q_2,g}-\us_{q_1,g}|^2)\dx\\
    -\int_{\di\BR}\ol{(\us_{q_2,g}-\us_{q_1,g})}
    \frac{\di(\us_{q_2,g}-\us_{q_1,g})}{\di\nu}\ds \,.
    \label{eq:Monotonicity_equality}  
  \end{multline}
\end{lemma}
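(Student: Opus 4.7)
My plan is to transform both sides of (\ref{eq:Monotonicity_equality}) into the \emph{same} volume integral over $\BR$, namely $k^2\int_\BR (q_2-q_1)u_{q_1,g}\,\ol{w}\dx$, where $w:=\us_{q_2,g}-\us_{q_1,g}$ and $n_j^2 := 1+q_j$. The right-hand side will be handled by Green's first identity together with the PDEs for $\us_{q_1,g}$ and $\us_{q_2,g}$, while the left-hand side will be unpacked using the three identities of Lemma~\ref{lmm:useful_relations} and Green's second identity.

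For the right-hand side I would first observe, via Green's first identity on $\BR$, that
\begin{equation*}
  \int_\BR\bigl(|\nabla w|^2-k^2n_2^2|w|^2\bigr)\dx-\int_{\di\BR}\ol{w}\,\frac{\di w}{\di\nu}\ds
  \,=\, -\int_\BR \ol{w}\bigl(\Delta w+k^2n_2^2 w\bigr)\dx.
\end{equation*}
Subtracting the two inhomogeneous Helmholtz equations $\Delta\us_{q_j,g}+k^2\us_{q_j,g}=-k^2 q_j u_{q_j,g}$ and rearranging using $u_{q_2,g}=\us_{q_2,g}+\ui_g$ gives $\Delta w+k^2n_2^2 w=-k^2(q_2-q_1)u_{q_1,g}$, so the right-hand side of (\ref{eq:Monotonicity_equality}) equals $k^2\int_\BR (q_2-q_1)u_{q_1,g}\ol{w}\dx$.

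For the left-hand side, I would first apply (\ref{eq:useful_1}) to obtain $\int_\Sd g\ol{F_{q_2}g}\ds=k^2\int_\BR q_2\ui_g\ol{u_{q_2,g}}\dx$ and, by the same far-field representation with complex conjugates swapped, $\int_\Sd \ol{g}F_{q_1}g\ds=k^2\int_\BR q_1 u_{q_1,g}\ol{\ui_g}\dx$. To handle the scattering-operator term I would combine (\ref{eq:useful_3}) (with $j=1$, $l=2$) with Green's second identity applied to $\us_{q_1,g}$ and $\ol{\us_{q_2,g}}$ on $\BR$; inserting the PDEs $\Delta\us_{q_j,g}+k^2\us_{q_j,g}=-k^2q_j u_{q_j,g}$ into that identity converts the boundary integral into a volume integral and yields
\begin{equation*}
  2\rmi k|C_d|^2\int_\Sd F_{q_1}g\,\ol{F_{q_2}g}\ds
  \,=\, k^2\int_\BR\bigl(q_2\us_{q_1,g}\ol{u_{q_2,g}}-q_1 u_{q_1,g}\ol{\us_{q_2,g}}\bigr)\dx.
\end{equation*}

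Substituting these three volume representations into the left-hand side and writing $\ui_g=u_{q_1,g}-\us_{q_1,g}$ in the first term and $\ol{\ui_g}=\ol{u_{q_2,g}}-\ol{\us_{q_2,g}}$ in the second, the four contributions involving mixed $\us$/$u$ products collapse cleanly; expanding $\ol{w}=\ol{u_{q_2,g}}-\ol{u_{q_1,g}}$ on the right-hand side and matching coefficients of $q_1|u_{q_1,g}|^2$, $q_2 u_{q_1,g}\ol{u_{q_2,g}}$, etc., produces the desired equality. I expect the main obstacle to be the Green's-identity manipulation in the last step above, where one must be careful that the non-self-adjoint pairing $\us_{q_1,g}$ against $\ol{\us_{q_2,g}}$ (with \emph{different} refractive indices on the two sides) generates precisely the asymmetric volume term that cancels the remaining contributions from (\ref{eq:useful_1}); the remainder of the argument is purely algebraic bookkeeping.
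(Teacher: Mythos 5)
Your argument is correct, and it is organized differently from (and more economically than) the paper's own proof. The paper evaluates the right-hand side by expanding the quadratic form in $\us_{q_2,g}-\us_{q_1,g}$ into the three sesquilinear pieces $a(\us_{q_2,g},\us_{q_2,g})-2\real a(\us_{q_2,g},\us_{q_1,g})+a(\us_{q_1,g},\us_{q_1,g})$, evaluating each with the weak formulation \eqref{eq:useful_2} and the boundary identity \eqref{eq:useful_3}, and then grinding through roughly a page of regrouping before invoking \eqref{eq:useful_1} at the very end. You instead identify the single common value $k^2\int_\BR(q_2-q_1)u_{q_1,g}\ol{w}\dx$ with $w=\us_{q_2,g}-\us_{q_1,g}$ and reduce both sides to it: the right-hand side via Green's first identity together with the observation that $\Delta w+k^2n_2^2w=-k^2(q_2-q_1)u_{q_1,g}$ (which I verified), and the left-hand side via \eqref{eq:useful_1}, its complex conjugate, and \eqref{eq:useful_3} combined with Green's second identity for the pair $\us_{q_1,g}$, $\ol{\us_{q_2,g}}$ (your asserted volume representation of $2\rmi k|C_d|^2\int_\Sd F_{q_1}g\,\ol{F_{q_2}g}\ds$ also checks out, including signs). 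The final cancellation using $\ui_g+\us_{q_1,g}=u_{q_1,g}$ and $\ol{\ui_g}+\ol{\us_{q_2,g}}=\ol{u_{q_2,g}}$ is exactly right. The two proofs use the same toolkit --- Lemma~\ref{lmm:useful_relations} plus Green's identities --- but your reduction to a common cross term buys a substantially shorter computation and makes it transparent why the difference $n_2^2-n_1^2$ enters only through $u_{q_1,g}$. The only point worth making explicit in a write-up is that the Green's identities are applied to $H^1_\loc$ functions whose distributional Laplacians lie in $L^2(\BR)$ (so the boundary terms are meaningful, and classical near $\di\BR$ where the fields are smooth); this is the same level of rigor the paper itself uses.
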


\begin{proof}
  The identity \eqref{eq:useful_3} (with $j=1$ and $l=2$) immediately
  implies that\vspace*{-2em}
  \begin{multline*}
    2\real\int_{\di\BR} \ol{\us_{q_1,g}}\frac{\di\us_{q_2,g}}{\di\nu} \ds\\
    \,=\, \int_{\di\BR} \Bigl( \ol{\us_{q_1,g}}\frac{\di\us_{q_2,g}}{\di\nu} 
    + \ol{\us_{q_2,g}}\frac{\di\us_{q_1,g}}{\di\nu}\Bigr) \ds
    -2\rmi k|C_d|^2 \int_\Sd F_{q_1}g\, \ol{F_{q_2}g} \ds \,.
  \end{multline*}
  Using this and \eqref{eq:useful_2} we find that\vspace*{-1em}
  \begin{equation*}
    \begin{split}
      &\int_\BR \bigl(|\nabla \us_{q_2,g}-\nabla\us_{q_1,g}|^2 
      - k^2 n_2^2 |\us_{q_2,g}-\us_{q_1,g}|^2\bigr) \dx\\
      &\phantom{\,=\,}
      -\int_{\di\BR} \ol{(\us_{q_2,g}-\us_{q_1,g})}
      \frac{\di(\us_{q_2,g}-\us_{q_1,g})}{\di\nu} \ds\\
      &\,=\, \int_\BR \bigl(|\nabla \us_{q_2,g}|^2 
      -k^2n_2^2|\us_{q_2,g}|^2\bigr) \dx
      + \int_\BR \bigl(|\nabla \us_{q_1,g} |^2 
      - k^2 n_2^2 |\us_{q_1,g}|^2\bigr) \dx\\
      &\phantom{\,=\,}
      -2\real\Bigl( \int_\BR \bigl(\nabla\us_{q_2,g}\cdot\ol{\nabla\us_{q_1,g}} 
      -k^2 n_2^2\us_{q_2,g}\ol{\us_{q_1,g}}\bigr) \dx 
      -\int_{\di\BR} \ol{\us_{q_1,g}} \frac{\di\us_{q_2,g}}{\di\nu}\ds \Bigr)\\
      &\phantom{\,=\,} 
      + 2\rmi k|C_d|^2 \int_\Sd F_{q_1}g\, \ol{F_{q_2}g}\ds
      - \int_{\di\BR} \ol{\us_{q_2,g}} \frac{\di\us_{q_2,g}}{\di\nu} \ds
       - \int_{\di\BR} \ol{\us_{q_1,g}} \frac{\di\us_{q_1,g}}{\di\nu} \ds\\
      &\,=\, k^2 \int_\BR q_2 \ui_g\ol{\us_{q_2,g}} \dx
      - 2 \real\Bigl( k^2\int_\BR q_2 \ui_g \ol{\us_{q_1,g}}\dx\Bigr)
      + k^2 \int_\BR q_1 \ui_g\ol{\us_{q_1,g}}\dx\\
      &\phantom{\,=\,} 
      + k^2\int_\BR (q_1-q_2) |\us_{q_1,g}|^2\dx
      + 2\rmi k|C_d|^2 \int_\Sd F_{q_1}g\, \ol{F_{q_2}g}\ds \,.\\[8em]
    \end{split}
  \end{equation*}
  Further simple manipulations give
  \begin{equation*}
    \begin{split}
      &\int_\BR \bigl(|\nabla \us_{q_2,g}-\nabla\us_{q_1,g}|^2 
      - k^2 n_2^2 |\us_{q_2,g}-\us_{q_1,g}|^2\bigr) \dx\\ 
      &\phantom{\,=\,}   
      -\int_{\di\BR} \ol{(\us_{q_2,g}-\us_{q_1,g})}
      \frac{\di(\us_{q_2,g}-\us_{q_1,g})}{\di\nu} \ds\\
      &\,=\, k^2 \int_\BR q_2 \ui_g \ol{\us_{q_2,g}}\dx 
      - 2 \real\Bigl( k^2\int_\BR (q_2-q_1) \ui_g \ol{\us_{q_1,g}}\dx\Bigr)\\
      &\phantom{\,=\,} 
      - k^2\! \int_\BR\!\! q_1 \ol{\ui_g} \us_{q_1,g} \dx
      - k^2\! \int_\BR\!\! (q_2-q_1) |\us_{q_1,g}|^2 \dx
      + 2\rmi k|C_d|^2\! \int_\Sd\!\! F_{q_1}g\, \ol{F_{q_2}g}\ds\\
      &\,=\, k^2 \int_\BR q_2 \ui_g \ol{u_{q_2,g}} \dx
      - k^2 \int_\BR q_1 \ol{\ui_g} u_{q_1,g} \dx
      - k^2\int_\BR (q_2 - q_1) |\ui_g|^2 \dx\\
      &\phantom{\,=\,}  
      - 2 \real\Bigl( k^2\int_\BR (q_2-q_1) \ol{\ui_g} \us_{q_1,g}\dx\Bigr)
      - k^2 \int_\BR (q_2-q_1) |\us_{q_1,g}|^2\dx \\
      &\phantom{\,=\,} 
      + 2\rmi k|C_d|^2 \int_\Sd F_{q_1}g\, \ol{F_{q_2}g}\ds\\
      &\,=\, k^2 \int_\BR q_2 \ui_g \ol{u_{q_2,g}} \dx 
      - k^2 \int_\BR q_1 \ol{\ui_g} u_{q_1,g} \dx
      - k^2\int_\BR (q_2-q_1) |u_{q_1,g}|^2 \dx\\
      &\phantom{\,=\,}  
      + 2\rmi k|C_d|^2 \int_\Sd F_{q_1}g\, \ol{F_{q_2}g}\ds \,.
    \end{split}
  \end{equation*}\\
  Finally, applying \eqref{eq:useful_1} we obtain that
  \begin{equation*}
    \begin{split}
      &\int_\BR \bigl(|\nabla \us_{q_2,g}-\nabla\us_{q_1,g}|^2 
      - k^2 n_2^2 |\us_{q_2,g}-\us_{q_1,g}|^2\bigr) \dx\\
      &\phantom{\,=\,}  
      -\int_{\di\BR}
      \ol{(\us_{q_2,g}-\us_{q_1,g})}
      \frac{\di(\us_{q_2,g}-\us_{q_1,g})}{\di\nu} \ds\\
      &\,=\,\int_\Sd ( g \, \ol{F_{q_2} g} - \ol{g}\,  F_{q_1} g ) \ds 
      - k^2\int_\BR (q_2-q_1) |u_{q_1,g}|^2 \dx\\
      &\phantom{\,=\,}  
      + 2\rmi k|C_d|^2 \int_\Sd F_{q_1}g\, \ol{F_{q_2}g}\ds \,,
    \end{split}
  \end{equation*}
  which proves the assertion.
\end{proof}

\begin{remark}
  Since the adjoint of the scattering operator $\Scal_{q_1}$ from
  \eqref{eq:ScatteringOperator} is given by 
  \begin{equation*}
    \Scal_{q_1}^*
    \,=\, I-2\rmi k|C_d|^2F_{q_1}^* \,,
  \end{equation*}
  we find that
  \begin{equation*}
    \Scal_{q_1}^*(F_{q_2}-F_{q_1})
    \,=\, F_{q_2}-F_{q_1} -2\rmi k|C_d|^2
    (F_{q_1}^*F_{q_2}-F_{q_1}^*F_{q_1}) \,, 
  \end{equation*}
  and accordingly, 
  \begin{equation*}
    \real(\Scal_{q_1}^*(F_{q_2}-F_{q_1}))
    \,=\, \real(F_{q_2}-F_{q_1} - 2\rmi k|C_d|^2F_{q_1}^*F_{q_2}) \,. 
  \end{equation*}
  Therefore the real part of the first two terms on the left hand
  side of \eqref{eq:Monotonicity_equality} fulfills
  \begin{equation}
    \label{eq:RewriteRealF}
    \begin{split}
      \real\Bigl(
      &\int_\Sd (g \, \ol{F_{q_2} g} - \ol{g}\,  F_{q_1} g)\ds
      + 2\rmi k|C_d|^2 \int_\Sd F_{q_1}g\, \ol{F_{q_2}g}\ds \Bigr)\\
      &\,=\, \real\Bigl(
      \int_\Sd g \ol{(F_{q_2}-F_{q_1}-2\rmi k|C_d|^2F_{q_1}^*F_{q_2})g}
      \ds\Bigr)\\
      &\,=\, \real\Bigl( \int_\Sd g \, \ol{\Scal_{q_1}^*(F_{q_2}-F_{q_1}) g}
      \ds\Bigr) \,.
    \end{split}
  \end{equation}
  The operator $\Scal_{q_1}^*(F_{q_2}-F_{q_1})$ is compact and normal
  (see \cite[Lemma~1]{lakshtanov2016difference}). 
  \hfill$\lozenge$
\end{remark}

Next we consider the right hand side of
\eqref{eq:Monotonicity_equality}, and we show that it is nonnegative
if $g$ belongs to the complement of a certain finite dimensional 
subspace $V\tm\LtSd$.  
To that end we denote by $I:H^1(\BR)\to H^1(\BR)$
the identity operator, by $J:H^1(\BR)\to L^2(\BR)$ the compact
embedding, and accordingly we define, for any $q\in\LinftyCRd$ and any
ball $\BR$ containing $\supp(q)$, the operator ${K:H^1(\BR)\to H^1(\BR)}$ by
\begin{equation*}
  Kv \,:=\, J^*J v \,,
\end{equation*}
and $K_{q}:H^1(\BR)\to H^1(\BR)$ by
\begin{equation*}
  K_{q} v \,:=\, J^*((1+q)J v) \,.
\end{equation*}
Then $K$ and $K_{q}$ are compact self-adjoint linear operators, and,
for any $v\in H^1(\BR)$,
\begin{equation*}
  \bigl\langle (I-K-k^2K_{q})v,v\bigr\rangle_{H^1(\BR)}\\
  \,=\, \int_\BR(|\nabla v|^2
  -k^2 (1+q) |v|^2)\dx \,.
\end{equation*}

For $0<\eps<R$ we denote by $N_\eps:H^1(\BR) \to L^2(\di\BR)$
the bounded linear operator that maps $v\in H^1(\BR)$ to the normal
derivative $\di v_\eps/\di\nu$ on $\di\BR$ of the radiating solution
to the exterior boundary value problem 
\begin{equation*}
  \Delta v_\eps+k^2v_\eps 
  \,=\, 0 \quad\text{in }\Rd\setminus \ol{B_{R-\eps}(0)} \,, 
  \qquad v_\eps\,=\,v \quad\text{on }\di B_{R-\eps}(0) \,,
\end{equation*}
and 
${\Lambda: L^2(\di\BR)\to L^2(\di\BR)}$ denotes the compact
exterior Neumann-to-\linebreak[4]Dirichlet operator that maps 
$\psi\in L^2(\di\BR)$ to the trace $w|_{\di\BR}$ of the
radiating solution to 
\begin{equation*}
  \Delta w+k^2w \,=0\, \quad\text{in }\Rd\setminus\ol{\BR} \,, 
  \qquad \frac{\di w}{\di\nu}\,=\,\psi \quad\text{on }\di\BR \,,
\end{equation*}
(see, e.g., \cite[p.~51--55]{colton2013inverse}).
Then,
\begin{equation*}
  N_\eps v \,=\,  \frac{\di v}{\di\nu}\Big|_{\di\BR}
  \qquad\text{and}\qquad
  \Lambda N_\eps \,=\,  v|_{\di\BR} \,,
\end{equation*}
and accordingly
\begin{equation*}
  \bigl\langle N_\eps^*\Lambda N_\eps v, v\bigr\rangle_{H^1(\BR)}
  \,=\, \bigl\langle \Lambda N_\eps v, N_\eps v\bigr\rangle_{L^2(\di\BR)}
  \,=\, \int_{\di\BR}\ol{v} \frac{\di v}{\di\nu}\ds 
\end{equation*}
for any $v\in H^1(\BR)$ that can be extended to a radiating solution
of the Helmholtz equation 
\begin{equation*}
  \Delta v+k^2v
  \,=\, 0 \quad\text{in }\Rd\setminus\ol{B_{R-\eps}(0)} \,.
\end{equation*}
In particular this holds for $v=\us_{q_2,g}-\us_{q_1,g}$ if the ball
$B_{R-\eps}(0)$ contains $\supp(q_1)$ and $\supp(q_2)$. 

\begin{lemma}
  \label{lmm:Monotonicity3}
  Let $q_1,q_2\in\LinftyCRd$ and let $\BR$ be a ball containing
  $\supp(q_1)\cup \supp(q_2)$. Then there exists a finite dimensional
  subspace $V\subset \LtSd$ such that 
  \begin{multline*}
    \label{eq:Monotonicity3}
    \int_\BR(|\nabla(\us_{q_2,g}-\us_{q_1,g})|^2
    -k^2n_2^2|\us_{q_2,g}-\us_{q_1,g}|^2) \dx\\
    - \real\Bigl( \int_{\di\BR}\ol{(\us_{q_2,g}-\us_{q_1,g})}
    \frac{\di(\us_{q_2,g}-\us_{q_1,g})}{\di\nu}\ds \Bigr)
    \,\geq\, 0 \qquad \text{for all } g\in V^\perp \,.
  \end{multline*}
\end{lemma}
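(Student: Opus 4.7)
The plan is to recast the left-hand side of the claimed inequality as a Hermitian form on $H^1(\BR)$ evaluated on the scattered-field difference $w_g := \us_{q_2,g} - \us_{q_1,g}$, to observe that the underlying operator is a compact self-adjoint perturbation of the identity, and finally to pull back the resulting finite co-dimensional positivity statement from $H^1(\BR)$ to $\LtSd$ via the bounded solution operator $g \mapsto w_g$.

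First I choose $\eps>0$ small enough that $\supp(q_1)\cup\supp(q_2) \tm B_{R-\eps}(0)$, which is possible because both supports are compact and strictly contained in $\BR$. With this choice $w_g$ satisfies $\Delta w_g + k^2 w_g = 0$ in $\Rd\setminus\ol{B_{R-\eps}(0)}$ together with the Sommerfeld radiation condition, so the auxiliary operators $N_\eps$ and $\Lambda$ from the excerpt apply to $w_g$ and give the identity
\[
\int_{\di\BR}\ol{w_g}\,\frac{\di w_g}{\di\nu}\ds \,=\, \bigl\langle N_\eps^*\Lambda N_\eps w_g, w_g\bigr\rangle_{H^1(\BR)}.
\]
Combining this with the volume identity $\langle(I - K - k^2 K_{q_2}) v, v\rangle_{H^1(\BR)} = \int_\BR(|\nabla v|^2 - k^2 n_2^2 |v|^2)\dx$ recorded before the lemma yields
\[
\text{LHS of the lemma} \,=\, \langle M w_g, w_g\rangle_{H^1(\BR)}, \qquad M \,:=\, I - K - k^2 K_{q_2} - \real(N_\eps^*\Lambda N_\eps).
\]

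Since $K$ and $K_{q_2}$ are compact self-adjoint by construction, and $N_\eps^*\Lambda N_\eps$ is compact as well (the Neumann-to-Dirichlet map $\Lambda$ is compact and $N_\eps$ is bounded), $M$ is a compact self-adjoint perturbation of the identity on $H^1(\BR)$. Standard spectral theory therefore implies that $M$ has at most finitely many non-positive eigenvalues, and the associated finite-dimensional spectral subspace $V'\tm H^1(\BR)$ satisfies $\langle Mv, v\rangle_{H^1(\BR)}\geq 0$ for every $v\in (V')^\perp$. The linear operator $\Psi:\LtSd\to H^1(\BR)$, $g\mapsto w_g$, is bounded by well-posedness of the direct scattering problem, so $V:=\Psi^*(V')\tm\LtSd$ is finite-dimensional with $\dim V\leq\dim V'$. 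For every $g\in V^\perp$ and every $v\in V'$ one has $\langle w_g, v\rangle_{H^1(\BR)}=\langle g, \Psi^*v\rangle_{\LtSd}=0$, hence $w_g\in(V')^\perp$ and $\langle M w_g, w_g\rangle_{H^1(\BR)}\geq 0$, which is the claim.

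The main technical obstacle is the clean identification of the LHS as $\langle M w_g, w_g\rangle_{H^1(\BR)}$ with $M$ of the desired form; in particular, the initial choice of $\eps$ is crucial, since only then does $w_g$ extend as a radiating Helmholtz solution on all of $\Rd\setminus\ol{B_{R-\eps}(0)}$ and allow the boundary integral to be absorbed into an $H^1(\BR)$ sesquilinear form through $N_\eps$ and $\Lambda$. Everything afterwards is routine spectral theory for compact perturbations of the identity.
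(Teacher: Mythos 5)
Your proof is correct and follows essentially the same route as the paper: the same choice of $\eps$, the same rewriting of the left-hand side as the quadratic form of $I-K-k^2K_{q_2}-\real(N_\eps^*\Lambda N_\eps)$ evaluated at $(\us_{q_2,g}-\us_{q_1,g})|_{\BR}$, and the same pull-back of the finite-dimensional exceptional subspace to $\LtSd$ via the adjoint of the solution operator. The only cosmetic difference is that the paper phrases the spectral step in terms of the eigenspaces of the compact part with eigenvalues larger than $1$ rather than the non-positive spectral subspace of $M$, which is the same thing.
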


\begin{proof}
  Let $\eps>0$ be sufficiently small, so that $\supp(q_1)\cup
  \supp(q_2)\subset B_{R-\eps}(0)$. 
  Then
  \begin{multline*}
      \int_\BR(|\nabla(\us_{q_2,g}-\us_{q_1,g})|^2
      -k^2n_2^2|\us_{q_2,g}-\us_{q_1,g}|^2) \dx\\
      - \real\Bigl(\int_{\di\BR}\ol{(\us_{q_2,g}-\us_{q_1,g})}
      \frac{\di(\us_{q_2,g}-\us_{q_1,g})}{\di\nu}\ds\Bigr)\\
      \,=\,
      \bigl\langle(I-K-k^2K_{q_2}-\real(N_\eps^*\Lambda N_\eps))(S_2-S_1)g,
      (S_2-S_1)g\bigr\rangle_{H^1(\BR)} \,,
  \end{multline*}
  where for $j=1,2$ we denote by $S_j:\LtSd\to H^1(\BR)$ the bounded
  linear operator that maps $g\in \LtSd$ to the restriction of the
  scattered field $\us_{q_j,g}$ on~$\BR$.

  Let $W$ be the sum of eigenspaces of the compact self-adjoint
  operator $K+k^2K_{q_2}+\real(N_\eps^*\Lambda N_\eps)$ associated to
  eigenvalues larger than $1$. Then $W$ is finite dimensional and 
  \begin{equation*}
    \bigl\langle(I-K-k^2K_{q_2}-\real(N_\eps^*\Lambda N_\eps))w, 
    w\bigr\rangle_{H^1(\BR)}
    \,\geq\, 0 
    \qquad \text{ for all } w\in W^\perp.
  \end{equation*}
  Since, for any $g\in\LtSd$,
  \begin{equation*}
    (S_2-S_1)g\in W^\perp 
    \qquad\text{if and only if}\qquad
    g\in \bigl((S_2-S_1)^*W\bigr)^\perp \,,
  \end{equation*}
  and of course $\dim((S_2-S_1)^*W)\leq\dim(W)<\infty$, choosing
  $V:=(S_2-S_1)^*W$ 
  ends the proof. 
\end{proof}

\begin{proof}[Proof of Theorem~\ref{thm:Monotonicity}]
  Taking the real part of \eqref{eq:Monotonicity_equality} and
  applying \eqref{eq:RewriteRealF}, the result follows immediately
  from Lemma~\ref{lmm:Monotonicity3}.
\end{proof}

\section{Localized wave functions}
\label{sec:Localized}
In this section we establish the existence of 
\emph{localized wave functions} that have arbitrarily large norm
on some prescribed region~$B\tm\Rd$ while at the same time having
arbitrarily small norm in a different region $D\tm\Rd$, assuming that 
$\Rd\setminus\ol{D}$ is connected.  
These will be utilized to establish a rigorous characterization of the
support of scattering objects in terms of the far field operator using
the monotonicity relations from Theorem~\ref{thm:Monotonicity} and
Corollary~\ref{cor:Monotonicity} in
Section~\ref{sec:ShapeReconstruction} below. 

\begin{theorem}
  \label{thm:LocPot1}
  Suppose that $q\in\LinftyCRd$ and
  let $B,D\tm\Rd$ be open and bounded such that $\Rd\setminus\ol{D}$
  is connected. 

  If $B\not\tm D$, then for any finite dimensional subspace
  $V\tm\LtSd$ there exists a sequence $(g_m)_{m\in\N}\tm\Vperp$ such
  that 
  \begin{equation*}
    \int_B |u_{q,g_m}|^2\dx \to \infty 
    \qquad\text{and}\qquad
    \int_D |u_{q,g_m}|^2\dx \to 0 \qquad \text{as } m\to\infty \,,
  \end{equation*}
  where $u_{q,g_m}\in \Heinsloc$ is given by \eqref{eq:HerglotzB}
  with $g=g_m$.
\end{theorem}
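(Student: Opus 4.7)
My plan is to adapt the localized-potentials strategy of \cite{gebauer2008localized} to the present far field setting. For an open bounded set $M\tm\Rd$ I introduce the bounded restriction operator
\begin{equation*}
  L_M\colon\LtSd\to L^2(M), \qquad L_Mg:=u_{q,g}|_M,
\end{equation*}
so that the claim of the theorem reads: for every finite-dimensional $V\tm\LtSd$ there is a sequence $(g_m)\tm\Vperp$ with $\|L_Bg_m\|\to\infty$ and $\|L_Dg_m\|\to 0$. Douglas's range-factorization lemma applied to the restrictions $L_B|_\Vperp$ and $L_D|_\Vperp$ reduces this to the purely algebraic statement
\begin{equation*}
  \text{Range}(L_B^*) \not\tm \text{Range}(L_D^*) + V.
\end{equation*}

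Next I would identify the adjoints. A direct computation gives $(L_M^*\phi)(\theta) = \int_M\ol{u_q(x;\theta)}\phi(x)\dx$, and Green's formula together with the Sommerfeld condition (in the style of the proof of Lemma~\ref{lmm:useful_relations}) yields the reciprocity identity $\ol{(L_M^*\phi)(-\theta)} = w_{\bar\phi}^\infty(\theta)$, where $w_{\bar\phi}$ denotes the unique radiating solution of the source problem $\Delta w+k^2n^2w = -\bar\phi$ in $\Rd$. Up to the unitary transformation $f\mapsto\ol{f(-\,\cdot\,)}$, $\text{Range}(L_M^*)$ therefore coincides with the space of far field patterns of radiating solutions produced by $L^2$-sources supported in $M$.

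For the central analytic step I would fix $x_0\in B\setminus\ol{D}$ and a small open ball $B_\delta(x_0)$ with $\ol{B_\delta(x_0)}\tm B\setminus\ol{D}$. A double integration by parts shows that the range of
\begin{equation*}
  L\colon H_0^2(B_\delta(x_0))\to L^2(B_\delta(x_0)), \qquad Lu:=-\Delta u-k^2n^2 u,
\end{equation*}
has orthogonal complement equal to the (infinite-dimensional) space of $L^2$-solutions of $\Delta\psi+k^2n^2\psi=0$ in $B_\delta(x_0)$. I select $N:=\dim V+1$ linearly independent densities $\phi^{(1)},\ldots,\phi^{(N)}\in\text{Range}(L)^\perp$ and argue by contradiction. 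If the range inclusion above were to hold, then by the adjoint identification there would exist $\psi_j\in L^2(D)$ and $\tilde v_j$ in a fixed finite-dimensional subspace of dimension at most $\dim V$ with $w_{\phi^{(j)}}^\infty = w_{\psi_j}^\infty+\tilde v_j$ for every $j$. A pigeonhole argument on $\C^N$ produces $c\neq 0$ with $\sum_j c_j\tilde v_j=0$, and setting $\phi:=\sum_j c_j\phi^{(j)}\neq 0$, $\psi:=\sum_j c_j\psi_j$, the radiating solutions $w_\phi$ and $w_\psi$ share a single far field pattern. Rellich's lemma then gives $w_\phi=w_\psi$ outside a large ball, and weak unique continuation for $\Delta+k^2n^2$ on the connected open set $\Rd\setminus(\ol{D}\cup\ol{B_\delta(x_0)})$ propagates this equality throughout. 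Since $w_\phi-w_\psi\in H^2_\loc$ by interior elliptic regularity, its Dirichlet and Neumann traces on $\di B_\delta(x_0)$ must vanish, so $u:=(w_\phi-w_\psi)|_{B_\delta(x_0)}$ lies in $H_0^2(B_\delta(x_0))$ and satisfies $Lu=\phi$. This places $\phi\in\text{Range}(L)$, contradicting $0\neq\phi\in\text{Range}(L)^\perp$.

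I expect the main obstacle to be the weak unique continuation step, which must be invoked under only the hypothesis $n^2\in L^\infty$; I would cite classical Aronszajn--Cordes-type results for Helmholtz operators with bounded coefficients. A minor topological point is the connectedness of $\Rd\setminus(\ol{D}\cup\ol{B_\delta(x_0)})$, which is immediate in $d=3$ and follows from a simple detour argument in $d=2$ using the hypothesis that $\Rd\setminus\ol{D}$ is connected. Once the adjoint identification is in place, the pigeonhole and Cauchy-data matching steps are essentially mechanical.
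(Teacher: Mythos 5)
Your proposal is correct, and it shares the overall frame of the paper's proof: you introduce exactly the operators $L_{q,B},L_{q,D}$ of Lemma~\ref{lmm:LocPot1}, identify $\Rcal(L_{q,M}^*)$ (up to a fixed unitary of $\LtSd$) with the far field patterns of radiating solutions generated by $L^2$-sources in $M$, and reduce the existence of the sequence $(g_m)\tm\Vperp$ via a Douglas-type lemma (the paper's Lemma~\ref{lmm:Workhorse}) to the non-inclusion $\Rcal(L_{q,B}^*)\not\tm\Rcal(L_{q,D}^*)+V$. Where you genuinely diverge is in how that non-inclusion is proved. The paper proves (Lemma~\ref{lmm:LocPot2}) that $\Rcal(L_{q,B}^*)\cap\Rcal(L_{q,D}^*)=\{0\}$ and that $\Rcal(L_{q,B}^*)$ is dense: two sources in the disjoint sets $B$ and $D$ producing the same far field are glued, via Rellich and unique continuation, into an entire radiating solution of the homogeneous problem, hence zero; a finite-dimensionality argument then yields the non-inclusion. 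You instead transplant the original localized-potentials device of Gebauer: pick $\dim V+1$ sources in a small ball $B_\delta(x_0)\tm B\setminus\ol{D}$ lying in the orthogonal complement of the range of $-\Delta-k^2n^2$ on $H^2_0(B_\delta(x_0))$, pigeonhole away the $V$-components, and use Rellich, unique continuation and Cauchy-data matching on $\di B_\delta(x_0)$ to force such a source into that range, a contradiction. Both routes rest on the same hard ingredients (Rellich plus weak unique continuation for $L^\infty$ refractive indices, which is fine in $d=2,3$); yours needs in addition interior $H^2$-regularity and the infinite-dimensionality of the space of $L^2$-solutions of $\Delta\psi+k^2n^2\psi=0$ in the small ball (provable, e.g., via a uniquely solvable impedance problem or restrictions of the total fields $u_q(\cdot;\theta)$), while the paper's needs injectivity of $L_{q,B}$ via the Lippmann--Schwinger equation to get density. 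Three small points to tighten in a write-up: (i) your identification via reciprocity involves the antilinear map $f\mapsto\ol{f(-\,\cdot\,)}$, so either track the conjugations (harmless here since $n^2$ is real and the local solution space is conjugation-invariant) or use the linear identification $L_{q,M}^*f=\Scal_q^*w^\infty$ of Lemma~\ref{lmm:LocPot1}; (ii) choosing $x_0\in B\setminus\ol{D}$ tacitly needs $B\not\tm\ol{D}$, but the paper's own reduction (replacing $B$ by a ball in $B\setminus\ol{D_\eps}$) makes the identical tacit assumption, so you are on equal footing; (iii) passing from the failure of the norm inequality on $\Vperp$ to a sequence with blow-up on $B$ and decay on $D$ requires the standard rescaling step, which is worth one sentence.
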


The proof of Theorem~\ref{thm:LocPot1} relies on the following
lemmas. 

\begin{lemma}
  \label{lmm:LocPot1}
  Suppose that $q\in\LinftyCRd$, let
  $n^2=1+q$, and assume that $D\tm\Rd$ is open and bounded.  We define 
  \begin{equation*}
    L_{q,D}:\LtSd\to\Lt(D) \,, \quad g\mapsto u_{q,g}|_D \,,
  \end{equation*}
  where $u_{q,g}\in \Heinsloc$ is given by \eqref{eq:HerglotzB}.
  Then $L_{q,D}$ is a compact linear operator and its adjoint is given by
  \begin{equation*}
    L_{q,D}^*: \Lt(D)\to\LtSd \,, \quad f\mapsto \Scal_q^*w^\infty \,,
  \end{equation*}
  where $\Scal_q$ denotes the scattering operator from
  \eqref{eq:ScatteringOperator}, and ${w^\infty\in\LtSd}$ is the far
  field pattern of the radiating solution $w\in\Heinsloc$
  to\footnote{Throughout, we identify $f\in\Lt(D)$ with its
    continuation to $\Rd$ by zero whenever appropriate.}
  \begin{equation}
    \label{eq:Defw}
    \Delta w+k^2n^2w \,=\, -f \qquad\text{in }\Rd \,.
  \end{equation}
\end{lemma}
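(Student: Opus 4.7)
The plan is to verify compactness of $L_{q,D}$ via Rellich's embedding and then identify the adjoint by applying Green's second formula and passing to the far-field limit. Linearity is immediate. For compactness, fix a ball $\BR\supset D\cup\supp(q)$. The map $g\mapsto u_{q,g}|_{\BR}$ is bounded from $\LtSd$ into $H^1(\BR)$, combining the obvious boundedness of the Herglotz operator $g\mapsto \ui_g$ (direct from the smooth kernel $e^{\rmi k\theta\cdot x}$) with well-posedness of the scattering problem, and composing with Rellich's compact embedding $H^1(\BR)\hookrightarrow\Lt(\BR)$ and the bounded restriction $\Lt(\BR)\to\Lt(D)$ yields compactness of $L_{q,D}$.

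For the adjoint, I would take $f\in\Lt(D)$, extend $f$ by zero to $\Rd$, and let $w\in\Heinsloc$ denote the radiating solution of \eqref{eq:Defw}. Since $n^2$ is real, $\overline{u_{q,g}}$ satisfies the homogeneous Helmholtz equation $\Delta\overline{u_{q,g}}+k^2n^2\overline{u_{q,g}}=0$ in $\Rd$. Applying Green's second formula to $\overline{u_{q,g}}$ and $w$ on a ball $\BR\supset D\cup\supp(q)$ and using $\Delta w+k^2n^2 w=-f$, the $k^2n^2$ contributions cancel and one obtains
\begin{equation*}
  \int_D \overline{u_{q,g}}\,f\dx
  \,=\,-\int_{\di\BR}\Bigl(\overline{u_{q,g}}\frac{\di w}{\di\nu}
       - w\frac{\di\overline{u_{q,g}}}{\di\nu}\Bigr)\ds.
\end{equation*}
I would then split $u_{q,g}=\ui_g+\us_{q,g}$ and treat the incident and scattered contributions to the boundary integral separately.

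For the incident part, interchanging the $\theta$-integral in $\overline{\ui_g(x)}=\int_\Sd e^{-\rmi k\theta\cdot x}\overline{g(\theta)}\ds(\theta)$ with the boundary integral and invoking the far-field representation \eqref{eq:FarfieldPattern} applied to $w$ produces exactly $\int_\Sd \overline{g(\theta)}\,w^\infty(\theta)\ds(\theta)$. For the scattered part, both $\us_{q,g}$ and $w$ are radiating, so the boundary integral is analysed as $R\to\infty$ using the Sommerfeld condition together with the far-field expansion \eqref{eq:FarfieldExpansion}, exactly as in the derivation of \eqref{eq:useful_3}; the limit equals $-2\rmi k|C_d|^2\int_\Sd \overline{\uinfty_{q,g}(\hat x)}\,w^\infty(\hat x)\ds(\hat x)$. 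Substituting $\uinfty_{q,g}=F_qg$, rewriting this as $-2\rmi k|C_d|^2\int_\Sd \overline{g}\,(F_q^*w^\infty)\ds$, and invoking the definition \eqref{eq:ScatteringOperator} of $\Scal_q$, one assembles
\begin{equation*}
  \int_D \overline{u_{q,g}}\,f\dx
  \,=\, \int_\Sd \overline{g}\bigl(w^\infty-2\rmi k|C_d|^2F_q^*w^\infty\bigr)\ds
  \,=\, \langle \Scal_q^*w^\infty,\,g\rangle_{\LtSd}.
\end{equation*}
Taking complex conjugates identifies $L_{q,D}^*f=\Scal_q^*w^\infty$.

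The main technical point is the careful tracking of complex conjugates and signs---the far-field representation \eqref{eq:FarfieldPattern} carries one minus sign, while combining the two Sommerfeld conditions brings in the factor $2\rmi k|C_d|^2$---together with uniform $o(1)$ control of the remainder in the far-field limit. Both are routine but easy to slip on; compactness and the application of Green's formula present no essential difficulty.
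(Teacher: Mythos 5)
Your proof is correct and follows essentially the same route as the paper's: Green's identity reduces the $L^2(D)$ pairing to a boundary integral over $\di\BR$, which is split into an incident part (handled by the far-field representation formula applied to $w$) and a scattered part (handled via the Sommerfeld condition and the far-field asymptotics), assembling to $w^\infty-2\rmi k|C_d|^2F_q^*w^\infty=\Scal_q^*w^\infty$. The only cosmetic difference is the compactness argument, where the paper simply notes that $L_{q,D}$ is an integral operator with square-integrable (Hilbert--Schmidt) kernel instead of invoking the $H^1$ bound and Rellich's embedding.
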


\begin{proof}
  The representation formula for the total field in
  \eqref{eq:HerglotzB} shows that $L_{q,D}$ is a Fredholm integral
  operator with square integrable kernel and therefore compact and
  linear from $\LtSd$ to $\Lt(D)$.

  The existence and uniqueness of a radiating solution
  ${w\in\Heinsloc}$ of \eqref{eq:Defw} follows again
  from~\cite[Thm.~8.7]{colton2013inverse} (see also 
  \cite[Thm.~6.9]{kirsch2011introduction}). 
  To determine the adjoint of $L_{q,D}$ we first observe that, for any
  ball $\BR$, this solution satisfies, for any $v\in\Heins(\BR)$, 
  \begin{equation}
    \label{eq:ProofLocPot1-1}
    \int_\BR (\nabla w\cdot\nabla v -k^2n^2wv)\dx  
    \,=\, \int_\BR f v \dx + \int_{\di\BR} v \frac{\di w}{\di\nu}\ds \,.
  \end{equation}
  We choose $R>0$ large enough such that $\supp(q)$ and $D$ are
  contained in $\BR$.  
  Applying \eqref{eq:ProofLocPot1-1}, Green's formula, and the
  representation formula for the far field pattern~$w^\infty$ of $w$
  analogous to \eqref{eq:FarfieldPattern} we find that, for any
  $g\in\LtSd$ and $f\in\Lt(D)$, 
  \begin{equation}
    \label{eq:ProofLocpot1-1}
    \begin{split}
      &\int_D (L_{q,D}g) \ol{f} \dx
      \,=\, \int_\BR(\nabla u_{q,g}\cdot\ol{\nabla w}
      -k^2n^2u_{q,g}\ol{w})\dx
      -\int_{\di\BR} u_{q,g}\ol{\frac{\di w}{\di\nu}}\ds\\
      &\,=\, \int_{\di\BR} \Bigl(\frac{\di u_{q,g}}{\di\nu}\ol{w} 
      -u_{q,g}\ol{\frac{\di w}{\di\nu}}\Bigr)\ds\\
      &\,=\, \int_\Sd g(\theta) 
      \int_{\di\BR} \Bigl(\frac{\di e^{\rmi k\theta\cdot y}}{\di\nu_y}\ol{w(y)} 
      -e^{\rmi k\theta\cdot y}
      \ol{\frac{\di w}{\di\nu}(y)}\Bigr)\ds(y)\ds(\theta)\\
      &\phantom{\,=\,}+\int_\Sd g(\theta) 
      \int_{\di\BR} \Bigl(\frac{\di \us_{q,g}}{\di\nu_y}(y;\theta)\ol{w(y)} 
      -\us_{q,g}(y;\theta)\ol{\frac{\di w}{\di\nu}(y)}\Bigr)
      \ds(y)\ds(\theta)\\
      &\,=\, \int_\Sd g(\theta) \ol{w^\infty(\theta)} \ds(\theta)\\
      &\phantom{\,=\,}+\int_\Sd g(\theta) 
      \int_{\di\BR} \Bigl(\frac{\di \us_{q,g}}{\di\nu_y}(y;\theta)\ol{w(y)} 
      -\us_{q,g}(y;\theta)\ol{\frac{\di w}{\di\nu}(y)}\Bigr)
      \ds(y)\ds(\theta) \,.
    \end{split}
  \end{equation}
  Using the radiation condition \eqref{eq:Sommerfeld} and the farfield
  expansion \eqref{eq:FarfieldExpansion} we obtain that, as $R\to\infty$,
  \begin{equation*}
    \begin{split}
      \int_{\di\BR} \Bigl(\frac{\di \us_{q,g}}{\di\nu_y}(y;\theta)\ol{w(y)} 
      -&\us_{q,g}(y;\theta)\ol{\frac{\di w}{\di\nu}(y)}\Bigr) \ds(y)\\
      &\,=\, 2\rmi k \int_{\di\BR} \us_{q,g}(y;\theta)\ol{w(y)} \ds(y) 
      + o(1)\\
      &\,=\, 2\rmi k |C_d|^2  
      \int_\Sd \uinfty_{q,g}(\yhat;\theta )\ol{w^\infty(\yhat)}\ds(\yhat) 
      + o(1) \,.
    \end{split}
  \end{equation*}
  Accordingly, substituting this into \eqref{eq:ProofLocpot1-1}, and
  using \eqref{eq:FarfieldOperator} and \eqref{eq:ScatteringOperator}
  gives 
  \begin{equation*}
    \begin{split}
      &\int_D (L_{q,D}g) \ol{f} \dx\\
      &\,=\, \int_\Sd g(\theta) \ol{w^\infty(\theta)} \ds(\theta)
      +2\rmi k |C_d|^2 \int_\Sd g(\theta) 
      \int_\Sd\uinfty_{q,g}(\yhat;\theta)\ol{w^\infty(\yhat)}\ds(\yhat)
      \ds(\theta)\\
      &\,=\, \int_\Sd\!\! g(\theta) \ol{w^\infty(\theta)} \ds(\theta)
      +2\rmi k |C_d|^2 \int_\Sd\!\! (F_qg)(\yhat) 
      \ol{w^\infty(\yhat)} \ds(\yhat)
      \,=\, \int_\Sd\!\! g \, \ol{\Scal_q^*w^\infty} \ds \,.
    \end{split}
  \end{equation*}
\,
\end{proof}

\begin{lemma}
  \label{lmm:LocPot2}
  Suppose that $q\in\LinftyCRd$.  Let
  $B,D\tm\Rd$ be open and bounded such that
  $\Rd\setminus(\ol{B}\cup\ol{D})$ is connected and 
  $\ol{B}\cap\ol{D}=\emptyset$.  Then,
  \begin{equation*}
    \Rcal(L_{q,B}^*)\cap\Rcal(L_{q,D}^*) \,=\, \{0\} \,,
  \end{equation*}
  and $\Rcal(L_{q,B}^*),\Rcal(L_{q,D}^*)\tm\LtSd$ are both dense.
\end{lemma}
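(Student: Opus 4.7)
The plan is to use Lemma 4.2 to rewrite the ranges $\Rcal(L_{q,B}^*)$ and $\Rcal(L_{q,D}^*)$ in terms of far field patterns of radiating solutions to an inhomogeneous Helmholtz equation, and then combine Rellich's lemma, unique continuation, and uniqueness of the radiating solution. For the density claims I would work with the equivalent reformulation $\overline{\Rcal(L_{q,D}^*)}=\Ncal(L_{q,D})^\perp$, so it suffices to prove that $L_{q,D}$ (and by the same argument $L_{q,B}$) is injective. If $L_{q,D}g = u_{q,g}|_D = 0$, then since $u_{q,g}\in\Heinsloc$ satisfies $(\Delta+k^2n^2)u_{q,g}=0$ on the connected set $\Rd$ and vanishes on the non-empty open set $D$, unique continuation for elliptic equations with $\Linfty$ potentials forces $u_{q,g}\equiv 0$. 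Outside $\supp(q)$ this reads $\ui_g=-\us_{q,g}$, exhibiting $\ui_g$ as an entire solution of the free Helmholtz equation that satisfies the Sommerfeld radiation condition; uniqueness of the radiating solution of the homogeneous equation in $\Rd$ then gives $\ui_g\equiv 0$, and injectivity of the Herglotz operator yields $g=0$.

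For the intersection claim, let $\phi\in\Rcal(L_{q,B}^*)\cap\Rcal(L_{q,D}^*)$ and write $\phi=\Scal_q^* w_B^\infty=\Scal_q^* w_D^\infty$, where $w_B,w_D\in\Heinsloc$ are the radiating solutions of $(\Delta+k^2n^2)w_B=-f_B$ and $(\Delta+k^2n^2)w_D=-f_D$ with $f_B\in\Lt(B)$, $f_D\in\Lt(D)$. Since $\Scal_q$ is unitary, $\Scal_q^*$ is injective, so $w_B^\infty=w_D^\infty$ and therefore $w:=w_B-w_D$ is a radiating solution with vanishing far field. Rellich's lemma applied outside a large ball containing $\supp(q)\cup\ol{B}\cup\ol{D}$, combined with unique continuation on the connected open set $\Rd\setminus(\ol{B}\cup\ol{D})$ (where $w$ solves the homogeneous equation $(\Delta+k^2n^2)w=0$ because $f_B,f_D$ vanish there), shows $w\equiv 0$ on $\Rd\setminus(\ol{B}\cup\ol{D})$.

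The main technical step, and in my view the main obstacle, is to promote this to $w_B\equiv 0$ on $\Rd\setminus\ol{B}$, which immediately yields $w_B^\infty=0$ and hence $\phi=0$. Since $n^2\in\Linfty$ and $f_B,f_D\in\Lt$, elliptic regularity gives $w\in H^2_\loc(\Rd)$; combined with the disjointness hypothesis $\ol{B}\cap\ol{D}=\emptyset$, which provides a genuine one-sided neighborhood of $\partial B$ lying inside $\Rd\setminus(\ol{B}\cup\ol{D})$ on which $w$ vanishes, this forces both the $H^2$-trace and the normal trace of $w$ on $\partial B$ to vanish. Consequently $u:=\chi_B w$ extends by zero to a compactly supported $H^2(\Rd)$-function (and is trivially radiating), and a distributional check across $\partial B$ (no jump in either trace) together with $(\Delta+k^2n^2)w=-f_B$ on $B$ (using that $f_D\equiv 0$ on $B$ because $\ol{B}\cap\ol{D}=\emptyset$) shows that $u$ is a radiating solution of $(\Delta+k^2n^2)u=-f_B$ on all of $\Rd$. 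Uniqueness of the radiating solution of this inhomogeneous problem identifies $u=w_B$, so $w_B=\chi_B w$ vanishes off $\ol{B}$, its far field vanishes, and $\phi=0$.
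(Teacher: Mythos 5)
Your proposal is correct, and while the first two parts coincide with the paper's argument in spirit, the intersection step takes a genuinely different route. For injectivity you pass from $u_{q,g}\equiv 0$ to $g=0$ by observing that $\ui_g=-\us_{q,g}$ off $\supp(q)$ is an entire \emph{and} radiating solution, hence zero; the paper instead substitutes $u_{q,g}\equiv 0$ into the Lippmann--Schwinger equation to conclude $\ui_g=0$. Both are standard and equally short. For the intersection, the paper does not form the difference $w_B-w_D$ and extend it by zero: after obtaining $w_B=w_D$ on $\Rd\setminus(\ol{B}\cup\ol{D})$ via Rellich and unique continuation, it \emph{swaps} the two solutions on the defects, defining $w:=w_D$ on $B$ and $w:=w_B$ on $D$ (and $w_B=w_D$ elsewhere). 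Since $f_D$ vanishes near $\ol{B}$ and $f_B$ near $\ol{D}$, this glued $w$ is a global radiating solution of the \emph{homogeneous} equation $\Delta w+k^2n^2w=0$, hence $w\equiv 0$, and $w^\infty=w_B^\infty=0$ gives $h=0$. The swap trick avoids entirely the question of whether $\chi_B(w_B-w_D)$ defines an $H^2$ function with matching traces across $\di B$, and it only needs $\Heinsloc$-regularity and uniqueness for the homogeneous radiating problem. Your route instead invokes uniqueness for the inhomogeneous problem with datum $f_B$, which is equally available here (it is the same well-posedness theorem), so the conclusion $u=w_B$ and $w_B^\infty=0$ is valid.

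One small caveat on your ``main technical step'': since $B$ is merely an open bounded set, speaking of the $H^{3/2}$- and normal traces of $w$ on $\di B$ is not rigorous without boundary regularity. This is easily repaired, and in a way that makes the trace discussion unnecessary: choose an open neighborhood $U$ of $\ol{B}$ with $\ol{U}\cap\ol{D}=\emptyset$; then $w=w_B-w_D$ satisfies $(\Delta+k^2n^2)w=-f_B$ on all of $U$ and vanishes on $U\setminus\ol{B}$, so gluing $w|_U$ with the zero function on $\Rd\setminus\ol{B}$ (they agree on the overlap $U\setminus\ol{B}$) produces the desired compactly supported $\Heinsloc$-solution directly, with no jump computation across $\di B$. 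With that adjustment your argument is complete.
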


\begin{proof}
  To start with, we show the injectivity of $L_{q,B}$, and we note
  that the injectivity of $L_{q,D}$ follows analogously.
  Let $R>0$ such that $\supp(q)\tm\BR$.
  Then the solution $u_{q,g}$ of \eqref{eq:ScatteringProblem} from
  \eqref{eq:HerglotzB} satisfies the Lippmann-Schwinger equation
  \begin{equation}
    \label{eq:LippmannSchwinger}
    u_{q,g}(x) 
    \,=\, u_g^i(x) + k^2 \int_\Rd q(y) \Phi(x-y) u_{q,g}(y) \dy \,, 
    \qquad x\in\ \BR \,,
  \end{equation}
  where $\Phi$ denotes the fundamental solution to the
  Helmholtz equation (cf., e.g., \cite[Thm.~8.3]{colton2013inverse}). 
  By unique continuation, $L_{q,B}g=u_{q,g}|_B=0$ implies that
  $u_{q,g}=0$ in $\Rd$ (cf., e.g.,
  \cite[Sec.~2.3]{harrach2017monotonicity}). 
  Substituting this into \eqref{eq:LippmannSchwinger}, we find that the
  Herglotz wave function $u_g^i=0$ in $\BR$, and thus by
  analyticity on all of $\Rd$. 
  This implies that $g=0$ (cf., e.g.,
  \cite[Thm.~3.19]{colton2013inverse}), i.e., $L_{q,B}$ is injective.

  The injectivity of $L_{q,B}$ and $L_{q,D}$ immediately yields that
  $\Rcal(L_{q,B}^*)$ and $\Rcal(L_{q,D}^*)$ are dense in $\LtSd$.
  Next suppose that $h\in\Rcal(L_{q,B}^*)\cap\Rcal(L_{q,D}^*)$.  Then
  Lemma~\ref{lmm:LocPot1} shows that there exist $f_B\in\Lt(B)$, 
  $f_D\in\Lt(D)$, and $w_B,w_D\in\Heinsloc$ such that the far field
  patterns $w^\infty_B$ and $w^\infty_D$ of the radiating solutions to
  \begin{equation*}
    \Delta w_B+k^2(1+q)w_B \,=\, -f_B \qquad\text{and}\qquad
    \Delta w_D+k^2(1+q)w_D \,=\, -f_D \qquad\text{in } \Rd \,,
  \end{equation*}
  satisfy
  \begin{equation*}
    w^\infty_B \,=\, w^\infty_D \,=\, \Scal_qh \,.
  \end{equation*}
  Rellich's lemma and unique continuation guarantee that $w_B=w_D$ in
  $\Rd\setminus(\ol{B}\cup\ol{D})$ (cf., e.g., \cite[Thm.~2.14]{colton2013inverse}).
  Hence we may define $w\in\Heinsloc$ by
  \begin{equation*}
    w \,:=\,
    \begin{cases}
      w_B=w_D &\text{in }\Rd\setminus(\ol{B}\cup\ol{D})\,,\\
      w_B &\text{in }D\,,\\
      w_D &\text{in }B\,,
    \end{cases}
  \end{equation*}
  and $w$ is the unique radiating solution to
  \begin{equation*}
    \Delta w + k^2(1+q)w \,=\, 0 \qquad\text{in }\Rd \,.
  \end{equation*}
  Thus $w=0$ in $\Rd$, and since the scattering operator is unitary,
  this shows that $h=\Scal_q^*w^\infty=0$. 
\end{proof}

In the next lemma we quote a special case of Lemma~2.5 in
\cite{harrach2013monotonicity}.
\begin{lemma}
  \label{lmm:Workhorse}
  Let $X,Y$ and $Z$ be Hilbert spaces, and let $A:X\to Y$ and $B:X\to
  Z$ be bounded linear operators.  Then, 
  \begin{equation*}
    \exists C>0:\; \|Ax\|\leq C\|Bx\| \quad \forall x\in X 
    \qquad\text{if and only if}\qquad
    \Rcal(A^*)\tm\Rcal(B^*) \,.
  \end{equation*}
\end{lemma}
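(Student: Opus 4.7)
The plan is to treat the two directions separately. This is a version of Douglas' range inclusion theorem: the inequality $\|Ax\|\leq C\|Bx\|$ is an operator-factorization property in disguise, and both implications come from building a bounded factor operator between the relevant Hilbert spaces.

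For the implication $(\Rightarrow)$, I would start from the inequality and construct an auxiliary bounded operator $T$ that factors $A$ through $B$. Concretely, I would set $T(Bx):=Ax$ on the subspace $\Rcal(B)\tm Z$; the estimate $\|Ax\|\leq C\|Bx\|$ guarantees both well-definedness (on $\ker B$ both sides vanish) and boundedness by $C$, so $T$ extends by continuity to $\ol{\Rcal(B)}$ and by zero on $\Rcal(B)^\perp=\ker B^*$, yielding a bounded $T:Z\to Y$ with $TB=A$. Taking adjoints gives $A^*=B^*T^*$, hence $\Rcal(A^*)\tm\Rcal(B^*)$.

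For the converse $(\Leftarrow)$, the cleanest route is via the closed graph theorem. Given $y\in Y$, the hypothesis $\Rcal(A^*)\tm\Rcal(B^*)$ produces some $z\in Z$ with $B^*z=A^*y$, and the set of such $z$ is $z_0+\ker B^*$; define $My\in Z$ to be the unique minimum-norm representative, i.e.\ the orthogonal projection onto $\ker(B^*)^\perp$. The map $M:Y\to Z$ is linear. To verify closedness of its graph, suppose $y_n\to y$ and $My_n\to z^\ast$ in $Z$; then $B^*(My_n)=A^*y_n\to A^*y$, so $B^*z^\ast=A^*y$, and since each $My_n$ lies in the closed subspace $\ker(B^*)^\perp$, so does $z^\ast$, forcing $z^\ast=My$. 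Hence $M$ is bounded, $A^*=B^*M$, and taking adjoints once more gives $A=M^*B$ and therefore $\|Ax\|\leq\|M^*\|\,\|Bx\|$ for all $x\in X$.

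The only subtle point is the well-definedness of $M$ in the converse direction: one must pick a canonical solution $z$ of $B^*z=A^*y$ so that $M$ is genuinely a function, and ensure the selection is compatible with closedness. Projecting onto $\ker(B^*)^\perp$ achieves both, since that subspace is closed; this is really the only place where any care is needed, and the rest is a routine application of the closed graph theorem and duality.
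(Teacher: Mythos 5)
Your proof is correct. The paper does not prove this lemma at all --- it quotes it as a special case of Lemma~2.5 in the cited reference \cite{harrach2013monotonicity}, where the argument is essentially the one you give: the forward direction by factoring $A=TB$ through $\ol{\Rcal(B)}$ and taking adjoints, the converse by a closed-graph argument applied to the minimum-norm solution operator $M$ with $A^*=B^*M$. Your handling of the one delicate point --- selecting the representative in $\ker(B^*)^\perp$ so that $M$ is well defined, linear, and has closed graph --- is exactly right, so your write-up is a complete self-contained proof of the quoted result.
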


Now we give the proof of Theorem~\ref{thm:LocPot1}. 

\begin{proof}[Proof of Theorem~\ref{thm:LocPot1}]
  Suppose that $q\in\LinftyCRd$, let
  $B,D\tm\Rd$ be open such that $\Rd\setminus\ol{D}$ is
  connected, and let $V\tm\LtSd$ be a finite dimensional subspace. 
  We first note that without loss of generality we may assume that
  $\ol{B}\cap\ol{D}=\emptyset$ and
  that~${\Rd\setminus(\ol{B}\cup\ol{D})}$ is connected (otherwise we
  replace $B$ by a sufficiently small 
  ball ${\widetilde{B}\tm B\setminus\ol{D_\eps}}$, where $D_\eps$
  denotes a sufficiently small neighborhood of~$D$).

  We denote by $P_V:\LtSd\to\LtSd$ the orthogonal projection on $V$.
  Lemma~\ref{lmm:LocPot2} shows that 
  $\Rcal(L_{q,B}^*)\cap\Rcal(L_{q,D}^*)=\{0\}$, and that $\Rcal(L_{q,B}^*)$ is
  infinite dimensional. Using a simple dimensionality argument
  (see~\cite[Lemma~4.7]{harrach2017monotonicity}) 
  it follows that
  \begin{equation*}
    \Rcal(L_{q,B}^*) 
    \,\not\tm\, \Rcal(L_{q,D}^*)+V 
    \,=\, \Rcal(
    \begin{pmatrix}
      L_{q,D}^* & P_V^*
    \end{pmatrix})
    \,=\, \Rcal\biggl(
    \begin{pmatrix}
      L_{q,D}\\ P_V
    \end{pmatrix}^*\biggr) \,.
  \end{equation*}
  Accordingly, Lemma~\ref{lmm:Workhorse} implies that there is no
  constant $C>0$ such that 
  \begin{equation*}
    \|L_{q,B}g\|^2_{\Lt(B)}
    \,\leq\, C^2\biggl\|
    \begin{pmatrix}
      L_{q,D}\\P_V
    \end{pmatrix}
    g\biggr\|^2_{\Lt(D)\times\Lt(\Sd)}
    \,=\, C^2\bigl( \|L_{q,D}g\|^2_{\Lt(D)}+\|P_Vg\|^2_{\Lt(\Sd)} \bigr)
  \end{equation*}
  for all $g\in\LtSd$.
  Hence, there exists as sequence $(\gtilde_m)_{m\in\N}\tm\LtSd$ such
  that 
  \begin{equation*}
    \|L_{q,B}\gtilde_m\|_{\Lt(B)}\to\infty \quad\text{and}\quad
    \|L_{q,D}\gtilde_m\|_{\Lt(D)} + \|P_V\gtilde_m\|_{\LtSd}\to 0 
    \qquad\text{as } m\to\infty\,.
  \end{equation*}
  Setting $g_m:=\gtilde_m-P_V\gtilde_m\in \Vperp\tm\LtSd$ for any
  $m\in\N$, we finally obtain
  \begin{align*}
    \|L_{q,B}g_m\|_{\Lt(B)} 
    \,\geq\,
    \|L_{q,B}\gtilde_m\|_{\Lt(B)}-\|L_{q,B}\|\|P_V\gtilde_m\|_{\LtSd}
    &\,\to\,\infty 
    &&\text{as } m\to\infty \,,\\
    \|L_{q,D}g_m\|_{\Lt(D)} 
    \,\leq\, \|L_{q,D}\gtilde_m\|_{\Lt(D)}+\|L_{q,D}\|\|P_V\gtilde_m\|_{\LtSd} 
    &\,\to\, 0 
    &&\text{as } m\to\infty \,.
  \end{align*}
  Since $L_{q,B}g_m=u_{q,g_m}|_B$ and $L_{q,D}g_m=u_{q,g_m}|_D$, this
  ends the proof. 
\end{proof}

The next result is a simple consequence of the
Lemmas~\ref{lmm:LocPot1} and \ref{lmm:Workhorse}. 
\begin{theorem}
  \label{thm:LocPot2}
  Suppose that $q_1,q_2\in\LinftyCRd$, and let $D\tm\Rd$ be open and
  bounded. 
  If $q_1(x)=q_2(x)$ for a.e.\ $x\in\Rd\setminus\ol{D}$, then
  there exist constants $c,C>0$ such that 
  \begin{equation*}
    \label{eq:LocPot2}
    c\int_D|u_{q_1,g}|^2\dx 
    \,\leq\, \int_D|u_{q_2,g}|^2\dx
    \,\leq\, C\int_D|u_{q_1,g}|^2\dx 
    \qquad\text{for all } g\in\LtSd \,,
  \end{equation*}
  where $u_{q_j,g}\in\Heinsloc$, $j=1,2$, is given by
  \eqref{eq:HerglotzB} with $q=q_j$.
\end{theorem}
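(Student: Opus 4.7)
Write $L_{q_j,D}:\LtSd\to L^2(D)$, $g\mapsto u_{q_j,g}|_D$, as in Lemma~\ref{lmm:LocPot1}, so that the claimed inequalities read $c\,\|L_{q_1,D}g\|_{L^2(D)}^2 \leq \|L_{q_2,D}g\|_{L^2(D)}^2 \leq C\,\|L_{q_1,D}g\|_{L^2(D)}^2$. By Lemma~\ref{lmm:Workhorse}, each of the two inequalities is equivalent to an inclusion between the ranges $\Rcal(L_{q_1,D}^*)$ and $\Rcal(L_{q_2,D}^*)$, so the entire theorem reduces to the range equality $\Rcal(L_{q_1,D}^*)=\Rcal(L_{q_2,D}^*)$. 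My plan is to establish this equality by constructing a bounded intertwining operator $B:L^2(D)\to L^2(D)$ with $L_{q_2,D}=B\,L_{q_1,D}$; taking adjoints then gives $\Rcal(L_{q_2,D}^*)\tm\Rcal(L_{q_1,D}^*)$, and the symmetric construction (interchanging $q_1$ and $q_2$) yields the reverse inclusion.

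To build $B$, I would use that the difference $w:=u_{q_2,g}-u_{q_1,g}$ is a radiating solution of
\begin{equation*}
\Delta w + k^2(1+q_2)\,w \,=\, -k^2(q_2-q_1)\,u_{q_1,g} \quad \text{in } \Rd,
\end{equation*}
as follows by subtracting the Helmholtz equations satisfied by $u_{q_1,g}$ and $u_{q_2,g}$ and rearranging. Since $q_1=q_2$ a.e.\ on $\Rd\setminus\ol{D}$, the right-hand side is an $L^2$-function supported in $\ol{D}$. Let $T_{q_2}:L^2(D)\to L^2(D)$ be the solution operator that sends $h\in L^2(D)$ (extended by zero to $\Rd$) to the restriction to $D$ of the radiating solution $\tilde{w}$ of $\Delta\tilde{w}+k^2(1+q_2)\tilde{w}=-h$; this is essentially the same solution operator implicitly used in Lemma~\ref{lmm:LocPot1} to define $L_{q_2,D}^*$. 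Then $w|_D = k^2\,T_{q_2}M_{q_2-q_1}(u_{q_1,g}|_D)$, where $M_{q_2-q_1}$ denotes multiplication by $q_2-q_1\in\Linfty$, and hence
\begin{equation*}
L_{q_2,D}\,g \,=\, (I + k^2\,T_{q_2}M_{q_2-q_1})\,L_{q_1,D}\,g \,=:\, B\,L_{q_1,D}\,g.
\end{equation*}
Since both $T_{q_2}$ and $M_{q_2-q_1}$ are bounded on $L^2(D)$, so is $B$.

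From $L_{q_2,D}=B\,L_{q_1,D}$ I get $L_{q_2,D}^* = L_{q_1,D}^*B^*$, hence $\Rcal(L_{q_2,D}^*)\tm\Rcal(L_{q_1,D}^*)$; interchanging the roles of $q_1$ and $q_2$ yields an analogous bounded intertwiner $B'$ with $L_{q_1,D}=B'L_{q_2,D}$ and the reverse inclusion. Two applications of Lemma~\ref{lmm:Workhorse} then deliver both inequalities. The only substantive step is the boundedness of $T_{q_2}$ on $L^2(D)$, which reduces to the $L^2$-well-posedness and interior $H^2$-regularity for the exterior Helmholtz problem with compactly supported source, already invoked in this section; the remainder is algebra.
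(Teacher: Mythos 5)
Your argument is correct, and it reaches the same reduction as the paper (comparison of the ranges of $L_{q_1,D}^*$ and $L_{q_2,D}^*$ via Lemma~\ref{lmm:Workhorse}) by a noticeably different route. The paper works entirely on the dual side: it uses the formula $L_{q_j,D}^*f=\Scal_{q_j}^*w_j^\infty$ from Lemma~\ref{lmm:LocPot1} and observes that the radiating solution $w_1$ of the $q_1$-source problem also solves the $q_2$-source problem with the modified source $f+k^2(q_1-q_2)w_1$, which still lies in $\Lt(D)$ because $q_1-q_2$ vanishes outside $\ol{D}$; this yields $\Rcal(\Scal_{q_1}L_{q_1,D}^*)=\Rcal(\Scal_{q_2}L_{q_2,D}^*)$, and the unitarity of the scattering operators together with Lemma~\ref{lmm:Workhorse} finishes the proof. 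You perform the analogous source-relocation trick on the primal side, for the difference of total fields $u_{q_2,g}-u_{q_1,g}$, and package it as an explicit bounded intertwiner $L_{q_2,D}=(I+k^2T_{q_2}M_{q_2-q_1})L_{q_1,D}$. This buys two things: the upper bound $\|L_{q_2,D}g\|_{\Lt(D)}\leq\|I+k^2T_{q_2}M_{q_2-q_1}\|\,\|L_{q_1,D}g\|_{\Lt(D)}$ actually follows immediately from the factorization, so the detour through adjoint ranges and Lemma~\ref{lmm:Workhorse} is optional rather than essential; and you never need to manipulate the unitaries $\Scal_{q_1},\Scal_{q_2}$ that appear in the paper's range identity. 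The only ingredients you must supply are the unique solvability of the radiating source problem $\Delta\tilde w+k^2(1+q_2)\tilde w=-h$ for $h\in\Lt(D)$ and the boundedness of $h\mapsto\tilde w|_D$ on $\Lt(D)$; the former is exactly what is invoked in Lemma~\ref{lmm:LocPot1}, and for the latter continuous dependence in $\Heins_{\mathrm{loc}}$ suffices (no interior $H^2$-regularity is needed). One small slip: in your factorization the symmetric operator for the reverse inequality should be built from $T_{q_1}$ and $M_{q_1-q_2}$, as you indicate; both directions require both contrasts, consistent with the two-sided statement of the theorem.
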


\begin{proof}
  Let $q_1,q_2\in\LinftyCRd$.  We denote by
  $L_{q_1,D}$ and $L_{q_2,D}$ the operators from
  Lemma~\ref{lmm:LocPot1} with $q=q_1$ and $q=q_2$, respectively.  
  We showed in Lemma~\ref{lmm:LocPot1} that for any $f\in\Lt(D)$ 
  \begin{equation}
    \label{eq:ProofCorLocPot1}
    L_{q_1,D}^*f \,=\, \Scal_{q_1}^*w_1^\infty \qquad\text{and}\qquad
    L_{q_2,D}^*f \,=\, \Scal_{q_2}^*w_2^\infty \,,
  \end{equation}
  where $w_j^\infty$, $j=1,2$, are the far field patterns of the
  radiating solutions to 
  \begin{equation*}
    \Delta w_j + k^2(1+q_j)w_j \,=\, -f \qquad\text{in }\Rd \,.
  \end{equation*}
  This implies that
  \begin{subequations}
    \label{eq:ProofCorLocPot3}
    \begin{align}
      \Delta w_1 + k^2(1+q_2)w_1 &\,=\, -(f + k^2(q_1-q_2)w_1) 
                                   \qquad\text{in }\Rd \,,\\
      \Delta w_2 + k^2(1+q_1)w_2 &\,=\, -(f + k^2(q_2-q_1)w_2) 
                                   \qquad\text{in }\Rd \,.
    \end{align}
  \end{subequations}
  Since $q_1-q_2$ vanishes a.e.\ outside $D$, we find
  that 
  \begin{equation*}
    \Scal_{q_2}^* w_1^\infty \,=\, L_{q_2,D}^*(f+k^2(q_1-q_2)w_1) 
    \qquad\text{and}\qquad
    \Scal_{q_1}^* w_2^\infty \,=\, L_{q_1,D}^*(f+k^2(q_2-q_1)w_2) \,.
  \end{equation*}
  Combining \eqref{eq:ProofCorLocPot1} and \eqref{eq:ProofCorLocPot3},
  we obtain that
  $\Rcal(\Scal_{q_1}L_{q_1,D}^*)=\Rcal(\Scal_{q_2}L_{q_2,D}^*)$. 
  Since $\Scal_{q_1}$ and $\Scal_{q_2}$ are unitary operators, the
  assertion follows from Lemma~\ref{lmm:Workhorse}. 
\end{proof}

As a first application of Theorem~\ref{thm:LocPot1} we establish a
converse of \eqref{eq:Monotonicity1b} in
Theorem~\ref{thm:Monotonicity}. 

\begin{theorem}
  \label{thm:Converse}
  Suppose that $q_1,q_2\in\LinftyCRd$ with $\supp(q_j)\tm\BR$.
  If $O\tm\Rd$ is an unbounded domain such that
  \begin{equation*}
    q_1 \leq q_2 \qquad \text{a.e.\ in } O \,,
  \end{equation*}
  and if  $B\tm\BR\cap O$ is open with
  \begin{equation}
    \label{eq:Converse2}
    q_1 \leq q_2-c \qquad \text{a.e.\ in } B \text{ for some } c>0 \,,
  \end{equation}
  then
  \begin{equation*}
    \real(\Scal_{q_1}^*F_{q_1}) 
    \not\geqfin \real(\Scal_{q_1}^*F_{q_2}) \,,
  \end{equation*}
  i.e., the operator $\real(\Scal_{q_1}^*(F_{q_2}-F_{q_1}))$ has
  infinitely many positive eigenvalues. 
  In particular, this implies that $F_{q_1}\not=F_{q_2}$.
\end{theorem}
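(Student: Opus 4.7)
The plan is to show that the compact self-adjoint operator $T:=\real(\Scal_{q_1}^*(F_{q_2}-F_{q_1}))$ has infinitely many positive eigenvalues; by the contrapositive of Lemma~\ref{lmm:leqfin}, this amounts to producing, for every finite-dimensional subspace $V_0\tm\LtSd$, some $g\in V_0^\perp$ with $\real\langle Tg,g\rangle_{\LtSd}>0$. In fact, I will construct a sequence $(g_m)_{m\in\N}\tm V_0^\perp$ along which $\real\langle Tg_m,g_m\rangle_{\LtSd}\to+\infty$; since $F_{q_1}=F_{q_2}$ would force $T=0$, this also yields $F_{q_1}\neq F_{q_2}$.

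Given $V_0$, first apply Theorem~\ref{thm:Monotonicity} to obtain a finite-dimensional $V_1\tm\LtSd$ so that~\eqref{eq:Monotonicity1a} holds on $V_1^\perp$, and set $V:=V_0+V_1$. Choose an open ball $B'$ with $\ol{B'}\tm B\cap O$; by~\eqref{eq:Converse2} one still has $q_2-q_1\geq c$ on $B'$. For any $g\in V^\perp$, split the right-hand side of~\eqref{eq:Monotonicity1a} into contributions over $O$ and over $\Rd\setminus O$. Since $q_2-q_1\geq 0$ a.e.\ on $O$, the first satisfies $\int_O(q_2-q_1)|u_{q_1,g}|^2\dx \geq c\int_{B'}|u_{q_1,g}|^2\dx$, and since $\supp(q_2-q_1)\tm\BR$, the second satisfies
\begin{equation*}
\int_{\Rd\setminus O}(q_2-q_1)|u_{q_1,g}|^2\dx\,\geq\,-\|q_2-q_1\|_\infty\int_E|u_{q_1,g}|^2\dx,
\end{equation*}
where the compact set $E:=\ol{\BR}\setminus O$ is disjoint from $\ol{B'}\tm O$.

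The decisive step is then the construction of an open bounded set $D\tm\Rd$ with $E\tm D$, $B'\not\tm D$, and $\Rd\setminus\ol D$ connected. Granting this, Theorem~\ref{thm:LocPot1} (applied with contrast $q_1$, probing sets $B'$ and $D$, and subspace $V$) produces a sequence $(g_m)\tm V^\perp$ such that $\|u_{q_1,g_m}\|_{\Lt(B')}^2\to\infty$ and $\|u_{q_1,g_m}\|_{\Lt(D)}^2\to 0$. Combining the two estimates above with~\eqref{eq:Monotonicity1a} and using $E\tm D$ yields
\begin{equation*}
\real\langle Tg_m,g_m\rangle_{\LtSd}\,\geq\,k^2c\,\|u_{q_1,g_m}\|_{\Lt(B')}^2-k^2\|q_2-q_1\|_\infty\,\|u_{q_1,g_m}\|_{\Lt(D)}^2\,\longrightarrow\,+\infty,
\end{equation*}
and since $g_m\in V^\perp\tm V_0^\perp$, the proof is complete.

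The main obstacle is the purely geometric construction of $D$: $\Rd\setminus O$ may have several bounded and irregularly shaped components, and all of $E$ must be enclosed in a single open bounded set whose exterior is connected while still avoiding $\ol{B'}$. The natural approach is to first fill in the bounded holes by passing to $\widetilde E:=\Rd\setminus U$, where $U$ denotes the unique unbounded connected component of $\Rd\setminus E$. Since $O$ is open, connected, unbounded, and disjoint from $E$, we have $O\tm U$, hence $\ol{B'}\tm U$ is disjoint from the compact set $\widetilde E$. Taking $D$ to be a sufficiently small open thickening of $\widetilde E$ inside $U\cup\widetilde E$ then satisfies all three properties; the connectedness of $\Rd\setminus\ol D$ is inherited from the path-connectedness of $U$ by a standard thickening argument. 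This topological step, not the analysis, is where the hypotheses that $O$ be connected and unbounded enter in an essential way.
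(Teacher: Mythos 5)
Your argument is correct and follows essentially the same route as the paper: combine the monotonicity estimate \eqref{eq:Monotonicity1a} on the complement of a finite-dimensional subspace with the localized wave functions of Theorem~\ref{thm:LocPot1}, splitting the integral of $(q_2-q_1)|u_{q_1,g}|^2$ into the part over $O$ (bounded below by $c\int_{B}|u_{q_1,g}|^2$) and the part over the remainder of $\BR$. The only real divergence is your hole-filling/thickening construction of $D$, which the paper avoids by simply taking $D=\BR\setminus\ol{O}$ in Theorem~\ref{thm:LocPot1} (this choice already satisfies the hypotheses, since $\Rd\setminus\ol{D}\supseteq O\cup(\Rd\setminus\ol{\BR})$ is connected and Theorem~\ref{thm:LocPot1} internally shrinks $B$ to a suitable ball anyway), so your extra topological step is careful but not needed.
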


\begin{proof}
  We prove the result by contradiction and assume that 
  \begin{equation}
    \label{eq:ProofConverse1}
    \real(\Scal_{q_1}^*(F_{q_2}-F_{q_1})) \,\leqfin\, 0 \,.
  \end{equation}
  Using the monotonicity relation \eqref{eq:Monotonicity1a} in
  Theorem~\ref{thm:Monotonicity}, we find 
  that there exists a finite dimensional subspace $V\tm\LtSd$ such
  that
  \begin{equation}
    \label{eq:ProofConverse2}
    \real\left(\int_\Sd g\, 
      \ol{\Scal_{q_1}^*(F_{q_2}-F_{q_1})g} \ds\right)
    \,\geq\, k^2 \int_\BR (q_2-q_1) |u_{q_1,g}|^2 \dx
    \qquad\text{for all } g\in\Vperp \,.
  \end{equation}
  Combining \eqref{eq:ProofConverse1}, \eqref{eq:ProofConverse2}, and
  \eqref{eq:Converse2} we obtain that there exists a finite
  dimensional subspace $\Vtilde\tm\LtSd$ such that, for any
  $g\in\Vtilde^\perp$, 
  \begin{equation*}
    \begin{split}
      0
      &\,\geq\, \real\left(\int_\Sd g\,
        \ol{\Scal_{q_1}^*(F_{q_2}-F_{q_1})g} \ds\right)
      \,\geq\, k^2 \int_\BR (q_2-q_1) |u_{q_1,g}|^2 \dx\\
      &\,=\, k^2 \int_{O\cap\BR} (q_2-q_1) |u_{q_1,g}|^2 \dx
      + k^2 \int_{\BR\setminus \ol{O}} (q_2-q_1) |u_{q_1,g}|^2 \dx\\
      &\,\geq\, c k^2 \int_{B} |u_{q_1,g}|^2 \dx
      - C k^2 \int_{\BR\setminus \ol{O}} |u_{q_1,g}|^2 \dx \,,
    \end{split}
  \end{equation*}
  where $C:=\|q_1\|_{\Linfty(\Rd)}+\|q_2\|_{\Linfty(\Rd)}$. 
  However, this contradicts Theorem~\ref{thm:LocPot1} with
  ${D=\BR\setminus\ol{O}}$ and $q=q_1$, which guarantees the existence
  of ${(g_m)_{m\in\N}\tm\Vtilde^\perp}$ with 
  \begin{equation*}
    \int_B|u_{q_1,g_m}|^2\dx \to \infty
    \qquad\text{and}\qquad
    \int_{\BR\setminus \ol{O}}|u_{q_1,g_m}|^2\dx \to 0 
    \qquad\text{as } m\to\infty \,.
  \end{equation*}
  Consequently, $\real(\Scal_{q_1}^*(F_{q_2}-F_{q_1})) \not\leqfin 0$.
\end{proof}

\section{Monotonicity based shape reconstruction}
\label{sec:ShapeReconstruction}
Given any open and bounded subset $B\tm\Rd$, we define the operator 
$T_B:\LtSd \to \LtSd$ by
\begin{equation}
  \label{eq:DefTB1}
  T_Bg \,:=\, k^2 H_B^*H_Bg \,,
\end{equation}
where $H_B:L^2(\Sd)\to L^2(B)$ denotes the \emph{Herglotz operator}
given by
\begin{equation*}
  (H_Bg)(x) 
  \,:=\, \int_\Sd e^{\rmi k x\cdot\theta} g(\theta) \ds(\theta) \,,
  \qquad x\in B \,.
\end{equation*}
Accordingly, 
\begin{equation*}
  \int_\Sd g\ol{T_Bg}\ds 
  \,=\, k^2\int_B |\ui_g|^2 \dx \qquad 
  \text{for all } g\in\LtSd \,,
\end{equation*}
where $u^i_g$ denotes the Herglotz wave function with density
$g$ from \eqref{eq:HerglotzA}. 
The operator~$T_B$ is bounded, compact and self-adjoint, and it
coincides with the Born approximation of the far field operator $F_q$
with contrast function $q=\chi_B$, where $\chi_B$ denotes the
characteristic function of $B$ (see, e.g., \cite{kirsch2017remarks}). 

In the following we discuss criteria to determine the
support $\supp(q)$ of an unknown scattering object in terms of the
corresponding far field operator $F_q$.  
To begin with we discuss the case when the contrast function $q$ is
positive a.e.\ on its support.

\begin{theorem}
  \label{thm:Shape1}
  Let $B,D\tm\Rd$ be open and bounded such that
  $\Rd\setminus\ol{D}$ is connected, and let ${q\in\LinftyCRd}$ with
  $\supp(q)=\ol{D}$.
  Suppose that $0\leq\qmin\leq q\leq\qmax<\infty$ a.e.\ in $D$ for some
  constants $\qmin,\qmax\in\R$.
  \begin{itemize}
  \item[(a)] If $B\tm D$, then
    \begin{equation*}
      \alpha T_B \,\leqfin\, \real(F_q) 
      \qquad\text{for all } \alpha\leq\qmin \,.
    \end{equation*}
  \item[(b)] If $B\not\tm D$, then
    \begin{equation*}
      \alpha T_B \,\not\leqfin\, \real(F_q) 
      \qquad\text{for any } \alpha>0 \,,
    \end{equation*}
    i.e., the operator $\real(F_q)-\alpha T_B$ has infinitely many
    negative eigenvalues for all $\alpha>0$.
  \end{itemize}
\end{theorem}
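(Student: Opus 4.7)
For part (a), I will invoke Theorem~\ref{thm:Monotonicity} with $q_1=0$ and $q_2=q$. Since $F_0=0$ we have $\Scal_0=I$, so the monotonicity inequality \eqref{eq:Monotonicity1a} reduces to
\begin{equation*}
  \real\Bigl(\int_\Sd g\,\ol{F_q g}\ds\Bigr)
  \,\geq\, k^2 \int_\Rd q\,|\ui_g|^2 \dx
  \qquad \text{for all } g\in V^\perp,
\end{equation*}
for some finite-dimensional $V\tm\LtSd$. Because $\supp(q)=\ol{D}$ and $0\leq\qmin\leq q$ a.e.\ on $D$, while $q=0$ off $\ol{D}$ and $B\tm D$, the pointwise bound $q\geq\qmin\chi_B\geq\alpha\chi_B$ holds a.e.\ on $\Rd$ for any $\alpha\leq\qmin$. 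Hence the RHS above is at least $\alpha k^2\int_B|\ui_g|^2\dx=\alpha\int_\Sd g\,\ol{T_B g}\ds$, and Lemma~\ref{lmm:leqfin} yields $\alpha T_B\leqfin\real(F_q)$.

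For part (b), I will argue by contradiction. Assume $\alpha T_B\leqfin\real(F_q)$ for some $\alpha>0$. By Lemma~\ref{lmm:leqfin} there is a finite-dimensional $V_1\tm\LtSd$ such that
\begin{equation*}
  \real\Bigl(\int_\Sd g\,\ol{F_q g}\ds\Bigr)
  \,\geq\, \alpha k^2 \int_B |\ui_g|^2 \dx
  \qquad \text{for all } g\in V_1^\perp.
\end{equation*}
On the other hand, Corollary~\ref{cor:Monotonicity} applied with $q_1=0,\,q_2=q$ produces a finite-dimensional $V_2\tm\LtSd$ with
\begin{equation*}
  \real\Bigl(\int_\Sd g\,\ol{F_q g}\ds\Bigr)
  \,\leq\, k^2 \int_D q\,|u_{q,g}|^2\dx
  \,\leq\, \qmax k^2 \int_D |u_{q,g}|^2 \dx
  \qquad \text{for all } g\in V_2^\perp.
\end{equation*}
Then I will use Theorem~\ref{thm:LocPot2} (valid since $q_1\equiv 0$ and $q_2=q$ agree a.e.\ outside $D$) to replace $|u_{q,g}|^2$ by $|\ui_g|^2$ on $D$ up to a constant. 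Combining the three bounds gives $\alpha\int_B|\ui_g|^2\dx\leq C\int_D|\ui_g|^2\dx$ for all $g\in(V_1+V_2)^\perp$, with a universal constant $C>0$.

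Finally, I will invoke the localized wave function result (Theorem~\ref{thm:LocPot1}) with $q=0$ and exceptional subspace $V_1+V_2$. Since $B\not\tm D$ and $\Rd\setminus\ol{D}$ is connected by hypothesis, there exists a sequence $(g_m)\tm(V_1+V_2)^\perp$ with $\int_B|\ui_{g_m}|^2\dx\to\infty$ and $\int_D|\ui_{g_m}|^2\dx\to0$, which contradicts the combined estimate above. The main obstacle is administrative rather than conceptual: I need to carefully accumulate the finite-dimensional exceptional subspaces produced by Theorem~\ref{thm:Monotonicity}, Corollary~\ref{cor:Monotonicity}, and the hypothesized $\leqfin$ relation into a single codimension-finite complement on which all three inequalities hold simultaneously, so that the localized wave functions from Theorem~\ref{thm:LocPot1} can be chosen in that complement to trigger the contradiction.
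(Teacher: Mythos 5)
Your proposal is correct and follows essentially the same route as the paper: part (a) is the monotonicity relation of Theorem~\ref{thm:Monotonicity} with $q_1=0$, $q_2=q$ (so that $\Scal_0=I$, $F_0=0$, $u_{0,g}=\ui_g$) combined with the pointwise bound $q\geq\alpha\chi_B$, and part (b) is the same contradiction argument via Corollary~\ref{cor:Monotonicity}, Theorem~\ref{thm:LocPot2} to pass from $u_{q,g}$ back to $\ui_g$ on $D$, and the localized wave functions of Theorem~\ref{thm:LocPot1} with $q=0$. The only cosmetic difference is that you track the accumulated exceptional subspace $V_1+V_2$ explicitly, which the paper handles implicitly by invoking transitivity of $\leqfin$.
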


\begin{proof}
  From Theorem~\ref{thm:Monotonicity} with $q_1=0$ and $q_2=q$ we
  obtain that there exists a finite dimensional subspace $V\tm\LtSd$
  such that
  \begin{equation*}
    \real\biggl(\int_\Sd g\, \ol{F_qg}\ds\biggr)
    \,\geq\, k^2 \int_D q|\ui_g|^2\dx
    \qquad \text{for all } g\in\Vperp \,.
  \end{equation*}
  Moreover, if $B\tm D$ and $\alpha\leq\qmin$, then 
  \begin{equation*}
    \alpha\int_\Sd g\ol{T_Bg}\ds
    \,=\, k^2\int_B \alpha|\ui_g|^2\dx
    \,\leq\, k^2\int_D q|\ui_g|^2 \dx \,,
  \end{equation*}
  which shows part (a).

  We prove part (b) by contradiction.  Let $B\not\tm D$, $\alpha>0$,
  and assume that
  \begin{equation}
    \label{eq:ProofShape1-3}
    \alpha T_B \,\leqfin\, \real(F_q) \,.
  \end{equation}
  Using the monotonicity relation \eqref{eq:Monotonicity1c} in
  Corollary~\ref{cor:Monotonicity} with $q_1=0$ and $q_2=q$, we find
  that there exists a finite dimensional subspace $V\tm\LtSd$ such
  that 
  \begin{equation}
    \label{eq:ProofShape1-4}
    \real\biggl(\int_\Sd g \, \ol{F_qg} \ds\biggr)
    \,\leq\, 
    k^2 \int_D q |u_{q,g}|^2\dx 
    \qquad\text{for all } g\in\Vperp \,.  
  \end{equation}
  Combining \eqref{eq:ProofShape1-3} and \eqref{eq:ProofShape1-4}, we
  obtain that there exists a finite dimensional
  subspace~$\Vtilde\in\LtSd$ such that 
  \begin{equation*}
    k^2 \alpha \int_B |\ui_g|^2 \dx
    \,\leq\, k^2 \int_D q |u_{q,g}|^2\dx 
    \,\leq\, k^2 \qmax \int_D |u_{q,g}|^2\dx 
    \qquad\text{for all } g\in\Vtilde^\perp \,. 
  \end{equation*}
  Applying Theorem~\ref{thm:LocPot2} with $q_1=0$ and $q_2=q$, this
  implies that there exists a constant~$C>0$ such that
  \begin{equation*}
    k^2 \alpha \int_B |\ui_g|^2 \dx
    \,\leq\, C k^2 \qmax \int_D |\ui_g|^2 \dx
    \qquad\text{for all } g\in\Vtilde^\perp \,.
  \end{equation*}
  However, this contradicts Theorem~\ref{thm:LocPot1} with $q=0$, which
  guarantees the existence of a sequence
  $(g_m)_{m\in\N}\tm\Vtilde^\perp$ with 
  \begin{equation*}
    \int_B|\ui_{g_m}|^2\dx \to \infty
    \qquad\text{and}\qquad
    \int_D|\ui_{g_m}|^2\dx \to 0 
    \qquad\text{as } m\to\infty \,.
  \end{equation*}
  Hence, $\real(F_q)-\alpha T_B$ must have infinitely many negative
  eigenvalues.  
\end{proof}

The next result is analogous to Theorem~\ref{thm:Shape2}, but with
contrast functions being negative on the support of the
scattering objects, instead of being positive.

\begin{theorem}
  \label{thm:Shape2}
  Let $B,D\tm\Rd$ be open and bounded such that
  $\Rd\setminus\ol{D}$ is connected, and let ${q\in\LinftyCRd}$ with
  $\supp(q)=\ol{D}$.
  Suppose that $-1<\qmin\leq q\leq\qmax\leq0$ a.e.\ in $D$ for some
  constants $\qmin,\qmax\in\R$.
  \begin{itemize}
  \item[(a)] If $B\tm D$, then there exists a constant $C>0$ such that
    \begin{equation*}
      \alpha T_B \,\geqfin\, \real(F_q) 
      \qquad\text{for all } \alpha\geq C\qmax \,.
    \end{equation*}
  \item[(b)]  If $B\not\tm D$, then
    \begin{equation*}
      \alpha T_B \,\not\geqfin\, \real(F_q) 
      \qquad\text{for any } \alpha<0 \,,
    \end{equation*}
    i.e., the operator $\real(F_q)-\alpha T_B$ has infinitely many
    positive eigenvalues.
  \end{itemize}
\end{theorem}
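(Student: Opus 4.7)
The proof mirrors that of Theorem~\ref{thm:Shape1}, but the roles of the monotonicity inequalities \eqref{eq:Monotonicity1a} and \eqref{eq:Monotonicity1c} are swapped, and additional care is required for the sign reversals introduced by $\qmax\leq 0$ in part (a) and by $\alpha<0$ in part (b).

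For part (a), I would apply Corollary~\ref{cor:Monotonicity} with $q_1=0$ and $q_2=q$ to obtain a finite dimensional $V\subset\LtSd$ with
\begin{equation*}
\real\Bigl(\int_\Sd g\,\ol{F_q g}\ds\Bigr) \leq k^2\int_D q|u_{q,g}|^2\dx\qquad\text{for all }g\in V^\perp.
\end{equation*}
Unlike in the positive-contrast case the integrand on the right is non-positive, so one cannot simply drop the region $D\setminus B$. Instead I would chain the estimates
\begin{equation*}
\int_D q|u_{q,g}|^2\dx \leq \qmax\int_D|u_{q,g}|^2\dx \leq c\qmax\int_D|u^i_g|^2\dx \leq c\qmax\int_B|u^i_g|^2\dx,
\end{equation*}
where the first step uses $q\leq\qmax$; the second uses the lower bound $c\int_D|u^i_g|^2\leq\int_D|u_{q,g}|^2$ from Theorem~\ref{thm:LocPot2} (applied to $q_1=0$, $q_2=q$, which agree off $\ol D$), multiplied by $\qmax\leq 0$; and the third uses $B\subset D$, multiplied by $c\qmax\leq 0$. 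For any $\alpha\geq C\qmax$ with $C:=c$, the non-negativity of $\int_B|u^i_g|^2$ forces $c\qmax\int_B|u^i_g|^2\leq\alpha\int_B|u^i_g|^2$. Multiplying by $k^2$ and combining with the monotonicity estimate above yields $\real\langle g,F_qg\rangle\leq\alpha\langle g,T_Bg\rangle$ on $V^\perp$, i.e.\ $\alpha T_B\geqfin\real(F_q)$.

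For part (b), I would argue by contradiction. Assume $\alpha T_B\geqfin\real(F_q)$ for some $\alpha<0$. By Lemma~\ref{lmm:leqfin} together with the monotonicity inequality \eqref{eq:Monotonicity1a} applied with $q_1=0$ and $q_2=q$, and the pointwise bound $q\geq\qmin$, there is a finite dimensional $V\subset\LtSd$ such that
\begin{equation*}
k^2\alpha\int_B|u^i_g|^2\dx \geq \real\langle g,F_qg\rangle \geq k^2\qmin\int_D|u^i_g|^2\dx\qquad\text{for all }g\in V^\perp.
\end{equation*}
Since $B\not\subset D$ and $\R^d\setminus\ol D$ is connected, Theorem~\ref{thm:LocPot1} applied with $q=0$ and subspace $V$ produces a sequence $(g_m)\subset V^\perp$ along which $\int_B|u^i_{g_m}|^2\to\infty$ and $\int_D|u^i_{g_m}|^2\to 0$. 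As $\alpha<0$, the left-hand side tends to $-\infty$ while the right-hand side tends to $0$, which is the desired contradiction.

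The principal obstacle is keeping track of the sign reversals produced by multiplying inequalities by the non-positive scalars $\qmax$ and $\alpha$. In particular, it is essential in part (a) to use Theorem~\ref{thm:LocPot2} in its \emph{lower} direction $c\int_D|u^i_g|^2\leq\int_D|u_{q,g}|^2$ rather than its upper direction used for the positive-contrast case, and the constant $C$ in the statement comes precisely from that lower bound. In part (b), the sign assumption $\alpha<0$ (replacing $\alpha>0$ from the positive-contrast analogue) is exactly what makes the localized-wave blow-up argument deliver a contradiction.
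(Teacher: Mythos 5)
Your proposal is correct and follows essentially the same route as the paper: part (a) chains Corollary~\ref{cor:Monotonicity} with the lower bound of Theorem~\ref{thm:LocPot2} (the sign flips from multiplying by $\qmax\leq 0$ being exactly the point), and part (b) combines \eqref{eq:Monotonicity1a} with the localized wave functions of Theorem~\ref{thm:LocPot1} for a contradiction driven by $\alpha<0$. Your write-up is in fact slightly more explicit than the paper's about which direction of the two-sided estimate in Theorem~\ref{thm:LocPot2} supplies the constant $C$.
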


\begin{proof}
  If $B\tm D$, then Corollary~\ref{cor:Monotonicity} and
  Theorem~\ref{thm:LocPot2} with $q_1=0$ and $q_2=q$ show that there
  exists a constant $C>0$ and a finite dimensional subspace
  $V\tm\LtSd$ such that, for any $g\in\Vperp$,
  \begin{equation*}
    \begin{split}
      \real\biggl(\int_\Sd g \, \ol{F_qg}\ds\biggr)
      &\,\leq\, k^2 \int_D q |u_{q,g}|^2 \dx\\
      &\,\leq\, k^2 \qmax \int_D |u_{q,g}|^2 \dx
      \,\leq\, C k^2 \qmax \int_D |\ui_{g}|^2 \dx \,.
    \end{split}
  \end{equation*}
  In particular, 
  \begin{equation*}
    \real(F_q) \,\leqfin\, \alpha T_B 
    \qquad \text{for all } \alpha\geq C\qmax \,,
  \end{equation*}
  and part (a) is proven.

  We prove part (b) by contradiction.  
  Let $B\not\tm D$, $\alpha<0$, and assume that
  \begin{equation}
    \label{eq:ProofShape2-3}
    \alpha T_B \,\geqfin\, \real(F_q) \,.
  \end{equation}
  Using the monotonicity relation \eqref{eq:Monotonicity1a} in
  Theorem~\ref{thm:Monotonicity} with $q_1=0$ and $q_2=q$, we find
  that there exists a finite dimensional subspace $V\tm\LtSd$ such
  that 
  \begin{equation}
    \label{eq:ProofShape2-4}
    \real\biggl(\int_\Sd g\, \ol{F_qg}\ds\biggr)
    \,\geq\, k^2\int_D q |\ui_g|^2 \dx 
    \qquad \text{for all } g\in\Vperp \,. 
  \end{equation}
  Combining \eqref{eq:ProofShape2-3} and \eqref{eq:ProofShape2-4}
  shows that there exists a finite dimensional subspace
  ${\Vtilde\in\LtSd}$ such that
  \begin{equation*}
    k^2 \alpha \int_B |\ui_g|^2 \dx
    \,\geq\, k^2\int_D q|\ui_g|^2 \dx
    \,\geq\, k^2 \qmin \int_D |\ui_g|^2 \dx 
    \qquad \text{for all } g\in\Vtilde^\perp \,.
  \end{equation*}
  However, since $\alpha<0$, this contradicts
  Theorem~\ref{thm:LocPot1} with $q=0$, which guarantees the existence
  of a sequence $(g_m)_{m\in\N}\tm\Vtilde^\perp$ such that 
  \begin{equation*}
    \alpha \int_B|\ui_{g_m}|^2\dx \to -\infty 
    \qquad\text{and}\qquad
    \int_D|\ui_{g_m}|^2\dx \to 0 \,.
  \end{equation*}
  Hence, $\real(F_q)-\alpha T_B$ must have infinitely many positive
  eigenvalues for all $\alpha<0$.
\end{proof}

Next we consider the general case, i.e., the contrast function $q$ is
no longer required to be either positive or negative
a.e.\ on the support of all scattering objects.  
While in the \emph{sign definite case} the criteria developed in
Theorems~\ref{thm:Shape1}--\ref{thm:Shape2} determine whether a
certain \emph{probing domain} $B$ is contained in the support $D$ of
the scattering objects or not, the criterion for the \emph{indefinite
  case} established in Theorem~\ref{thm:Shape3} below characterizes
whether a certain probing domain $B$ contains the support $D$ of the
scattering objects or not. 

\begin{theorem}
  \label{thm:Shape3}
  Let $B,D\tm\Rd$ be open and bounded such that $\R^d\setminus\ol{D}$
  is connected, and let ${q\in\LinftyCRd}$ with $\supp(q)=\ol{D}$. 
  Suppose that $-1<\qmin\leq q\leq\qmax<\infty$ a.e.\ on $D$ for
  some constants $\qmin,\qmax\in\R$.

  Furthermore, we assume that for any point $x\in\di D$ on the
  boundary of $D$, and for any neighborhood $U\tm D$ of $x$ in $D$, there
  exists an unbounded neighborhood $O\tm\Rd$ of $x$ with 
  $O\cap D\tm U$, and an open subset $E\tm O\cap D$, such
  that\,\footnote{As usual, the inequalities in
    \eqref{eq:LocalDefiniteness} are to be understood pointwise almost
    everywhere.} 
  \begin{equation}
    \label{eq:LocalDefiniteness}
    q|_O\geq0 \;\text{ and }\; q|_E\geq\qminE>0
    \qquad\text{or}\qquad 
    q|_O\leq0 \;\text{ and }\; q|_E\leq\qmaxE<0 
  \end{equation}
  for some constants $\qminE,\qmaxE\in\R$.
  \begin{itemize}
  \item[(a)] If $D\tm B$, then there exists a constant $C>0$ such that
    \begin{equation*}
      \alpha T_B \,\leqfin\, \real(F_q) \,\leqfin\, \beta T_B
      \quad\text{for all } \alpha \leq \min\{0,\qmin\}\,,\; 
      \beta \geq \max\{0,C\qmax\} \,.
    \end{equation*}
  \item[(b)]  If $D\not\tm B$, then
    \begin{equation*}
      \alpha T_B \,\not\leqfin\, \real(F_q) 
      \quad\text{for any } \alpha\in\R
      \quad\text{or}\quad
      \real(F_q) \,\not\leqfin\, \beta T_B
      \quad\text{for any } \beta\in\R \,.
    \end{equation*}
  \end{itemize}
\end{theorem}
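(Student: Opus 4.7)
The plan is to combine the monotonicity relations of Theorem~\ref{thm:Monotonicity} and Corollary~\ref{cor:Monotonicity} (applied with $q_1=0$, $q_2=q$), the localized wave functions of Theorem~\ref{thm:LocPot1}, and the norm equivalence of Theorem~\ref{thm:LocPot2}, adapting the arguments of Theorems~\ref{thm:Shape1} and~\ref{thm:Shape2} to the indefinite case by invoking the local sign hypothesis \eqref{eq:LocalDefiniteness} near $\partial D$. For part~(a), if $D\tm B$, the two monotonicity relations give, modulo a finite-dimensional $V\tm\LtSd$,
\begin{equation*}
k^2\!\int_D q\,|\ui_g|^2\dx \;\leq\; \real\langle F_qg,g\rangle_{\LtSd} \;\leq\; k^2\!\int_D q\,|u_{q,g}|^2\dx,\qquad g\in\Vperp.
\end{equation*}
Bounding $q$ by $\qmin$ and $\qmax$, using Theorem~\ref{thm:LocPot2} to replace $\int_D|u_{q,g}|^2$ by $C\!\int_D|\ui_g|^2$, and enlarging integrals from $D$ to $B$ yields the two-sided bound with the constants $\min\{0,\qmin\}$ and $\max\{0,C\qmax\}$; the sign-case distinctions are handled as in Theorems~\ref{thm:Shape1}(a) and~\ref{thm:Shape2}(a).

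For part~(b), assume $D\not\tm B$, and suppose for contradiction that $\alpha T_B\leqfin\real(F_q)\leqfin\beta T_B$ for some $\alpha,\beta\in\R$. A short topological argument produces a point $x\in\partial D$ with $x\notin\ol B$ (starting from $y\in D\setminus\ol B$ and following its path component in $D\setminus\ol B$ to the boundary). Pick a neighborhood $U\tm D$ of $x$ with $\ol U\cap\ol B=\emptyset$ and invoke \eqref{eq:LocalDefiniteness} to obtain an unbounded $O\tm\Rd$ with $O\cap D\tm U$ and an open $E\tm O\cap D$ of one of the two signed types; after shrinking $O$ outside $D$ we may assume $O\cap\ol B=\emptyset$ while $O$ still reaches infinity through the connected set $\Rd\setminus\ol D$. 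In the \emph{positive local case} ($q|_O\geq 0$, $q|_E\geq\qminE>0$), combining $\real(F_q)\leqfin\beta T_B$ with Theorem~\ref{thm:Monotonicity} and splitting $D=(D\cap O)\cup(D\setminus O)$ yields, modulo finite dimensions,
\begin{equation*}
\qminE\!\int_E|\ui_g|^2\dx \;\leq\; \beta\!\int_B|\ui_g|^2\dx + \|q\|_{\Linfty}\!\int_{D\setminus O}|\ui_g|^2\dx.
\end{equation*}
Applying Theorem~\ref{thm:LocPot1} with $q=0$, blow-up region $E$, and vanishing region an open bounded neighborhood of $B\cup(D\setminus O)$ (whose exterior is connected thanks to the corridor provided by the unbounded $O$) produces a sequence driving the left-hand side to infinity and the right-hand side to zero, giving the desired contradiction and showing $\real(F_q)\not\leqfin\beta T_B$ for every $\beta\in\R$.

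The \emph{negative local case} ($q|_O\leq 0$, $q|_E\leq\qmaxE<0$) is the main obstacle. Combining $\alpha T_B\leqfin\real(F_q)$ with Corollary~\ref{cor:Monotonicity} and the same splitting gives, modulo finite dimensions,
\begin{equation*}
|\qmaxE|\!\int_E|u_{q,g}|^2\dx + \alpha\!\int_B|\ui_g|^2\dx \;\leq\; \|q\|_{\Linfty}\!\int_{D\setminus O}|u_{q,g}|^2\dx,
\end{equation*}
which now couples the \emph{total} field $u_{q,g}$ on $E$ and $D\setminus O$ to the \emph{incident} field $\ui_g$ on $B$. To force a contradiction one needs a sequence $g_m\in\Vperp$ for which $\int_E|u_{q,g_m}|^2\dx\to\infty$, $\int_{D\setminus O}|u_{q,g_m}|^2\dx\to 0$, \emph{and} $\int_B|\ui_{g_m}|^2\dx$ remains uniformly bounded, so that the $\alpha$-term cannot swamp the blow-up when $\alpha<0$ is very negative. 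The plan is to refine Lemma~\ref{lmm:LocPot2} to a three-operator range non-inclusion: for $\tilde D$ an open bounded neighborhood of $D\setminus O$ with $\Rd\setminus\overline{\tilde D}$ connected, the subspace $\Rcal(L_{q,E}^*)$ is not contained in $\Rcal(L_{q,\tilde D}^*)+\Rcal(H_B^*)+V$ for any finite-dimensional $V\tm\LtSd$, where $H_B=L_{0,B}$ is the Herglotz operator of the probing domain. The proof should follow the Rellich/unique-continuation scheme of Lemma~\ref{lmm:LocPot2} applied to three radiating solutions (two with contrast $q$, one with $q=0$), glued across $\partial D$ through the unbounded corridor in $O$. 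Invoking Lemma~\ref{lmm:Workhorse} then produces the required simultaneous control $\|L_{q,E}g_m\|\to\infty$ and $\|L_{q,\tilde D}g_m\|,\|H_Bg_m\|,\|P_Vg_m\|\to 0$, yielding $\alpha T_B\not\leqfin\real(F_q)$ for every $\alpha\in\R$ and completing part~(b). Verifying this three-operator extension and the connectedness of the probing geometries (which is precisely what the unboundedness of $O$ in \eqref{eq:LocalDefiniteness} is designed to provide) is where the bulk of the remaining work lies.
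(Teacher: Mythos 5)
Parts (a) and the positive local case of part (b) follow the paper's argument essentially verbatim (the paper takes the vanishing region to be $\BR\setminus\ol{O}$ rather than a neighborhood of $B\cup(D\setminus O)$, which is a cosmetic difference, and both treatments gloss over the same topological fine points about locating $x\in\di D$ away from $B$). The genuine gap is the negative local case, which you yourself flag as unfinished: there your argument rests on a new ``three-operator'' range non-inclusion, $\Rcal(L_{q,E}^*)\not\tm\Rcal(L_{q,\widetilde D}^*)+\Rcal(H_B^*)+V$, which you only sketch and defer. This is not a routine adaptation of Lemma~\ref{lmm:LocPot2}: by Lemma~\ref{lmm:LocPot1} the elements of $\Rcal(L_{q,E}^*)$ and $\Rcal(L_{q,\widetilde D}^*)$ carry the prefactor $\Scal_q^*$, whereas $\Rcal(H_B^*)=\Rcal(L_{0,B}^*)$ consists of plain far field patterns (since $\Scal_0=I$), and $B$ may moreover overlap $D$; so the Rellich/unique-continuation gluing used in Lemma~\ref{lmm:LocPot2} does not transfer directly to an identity of the form $\Scal_q^*w_E^\infty=\Scal_q^*w_{\widetilde D}^\infty+w_B^\infty$. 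As written, the case $\alpha T_B\leqfin\real(F_q)$ with $q$ locally negative is therefore not proved.

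The obstruction you invoke to justify this detour --- that one would need $\int_B|\ui_{g_m}|^2\dx$ to stay bounded while $\int_E|u_{q,g_m}|^2\dx\to\infty$ --- is not how the paper proceeds. The paper first uses Theorem~\ref{thm:LocPot2} (with $q_1=0$, $q_2=q$) to replace the total-field integrals over $\BR\setminus\ol{O}$ and over $E$ by the corresponding incident-field integrals, and only then applies Theorem~\ref{thm:LocPot1} with $q=0$, blow-up region $E$ and vanishing region $\BR\setminus\ol{O}$, which contains $B$; hence $\int_B|\ui_{g_m}|^2\dx\leq\int_{\BR\setminus\ol{O}}|\ui_{g_m}|^2\dx\to 0$, and no uniform boundedness of the $B$-term is needed, whatever the sign and size of $\alpha$. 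In other words, the same two tools you already combine in the positive case suffice for the negative case as well, and no new localization lemma is required. (Your unease does have a kernel of truth: the subdomain-wise use of Theorem~\ref{thm:LocPot2} on $\BR\setminus\ol{O}$ and $E$ is not literally covered by its stated hypothesis that $q_1=q_2$ outside the domain in question; but that is the paper's route, whereas your route leaves its key lemma unestablished.)
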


\begin{remark}
  The \emph{local definiteness property} \eqref{eq:LocalDefiniteness}
  in Theorem~\ref{thm:Shape3} is, e.g., always satisfied, if the
  contrast function is piecewise analytic (see Appendix~A of
  \cite{harrach2013monotonicity}) or if the supports of the positive part and of the
  negative part of the constrast function are well-separated from each
  other. 
\hfill$\lozenge$
\end{remark}

\begin{proof}[Proof of Theorem~\ref{thm:Shape3}]
  If $D\tm B$, then Corollary~\ref{cor:Monotonicity} and
  Theorem~\ref{thm:LocPot2} with $q_1=0$ and $q_2=q$ show that there
  exists a constant $C>0$ and a finite dimensional subspace
  $V\tm\LtSd$ such that, for all $g\in\Vperp$ and any
  $\beta\geq\max\{0,C\qmax\}$, 
  \begin{equation*}
    \begin{split}
      \real\biggl(\int_\Sd g \, \ol{F_qg}\ds\biggr)
      &\,\leq\, k^2 \int_D q |u_{q,g}|^2 \dx
      \,\leq\, k^2 \qmax \int_D |u_{q,g}|^2 \dx\\
      &\,\leq\, k^2 C \qmax \int_D |\ui_{g}|^2 \dx
      \,\leq\, k^2 \beta \int_B |\ui_{g}|^2 \dx \,.
    \end{split}
  \end{equation*}

  Similarly, Theorem~\ref{thm:Monotonicity} with $q_1=0$ and $q_2=q$
  shows that there exists a finite dimensional subspace $V\tm\LtSd$ 
  such that, for all $g\in\Vperp$ and any $\alpha\leq\min\{0,\qmin\}$, 
  \begin{equation*}
    \begin{split}
      \real\biggl(\int_\Sd g\, \ol{F_qg}\ds\biggr)
      &\,\geq\, k^2 \int_D q|\ui_g|^2\dx 
      \,\geq\, k^2 \qmin \int_D |\ui_g|^2\dx 
      \,\geq\, k^2 \alpha \int_B |\ui_g|^2\dx \,,
    \end{split}
  \end{equation*}
  and part (a) is proven.
  
  We prove part (b) by contradiction.  Since $D\not\tm B$,
  $U:=D\setminus B$ is not empty, and there exists 
  $x\in \ol{U}\cap\di D$ as well as an unbounded open neighborhood
  $O\tm\Rd$ of $x$ with $O\cap D\tm U$, and an open subset $E\tm O\cap
  D$ such that \eqref{eq:LocalDefiniteness} is satisfied.
  Furthermore, let $R>0$ be large enough such that $B,D\tm\BR$.
  
  We first assume that $q|_O\geq0$ and $q|_{B}\geq\qminE>0$, and that
  $\real(F_q)\leqfin\beta T_B$ for some $\beta\in\R$.
  Using the monotonicity relation \eqref{eq:Monotonicity1a} in
  Theorem~\ref{thm:Monotonicity} with $q_1=0$ and $q_2=q$, we find
  that there exists a finite dimensional subspace $V\tm\LtSd$ such
  that, for any $g\in\Vperp$,
  \begin{equation*}
    \begin{split}
      0
      &\,\geq\, \int_\Sd g(\ol{\real(F_q)g - \beta T_B g})\ds
      \,\geq\, k^2 \int_\BR (q-\beta\chi_B) |\ui_g|^2 \dx\\
      &\,=\, k^2 \int_{\BR\setminus\ol{O}} (q-\beta\chi_B) |\ui_g|^2 \dx
      + k^2 \int_{\BR\cap O} (q-\beta\chi_B) |\ui_g|^2 \dx\\
      &\,\geq\, -k^2(\|q\|_{\Linfty(\Rd)}+|\beta|)
      \int_{\BR\setminus\ol{O}} |\ui_g|^2 \dx
      + k^2 \qminE \int_{E} |\ui_g|^2 \dx \,. 
    \end{split}
  \end{equation*}
  However, this contradicts Theorem~\ref{thm:LocPot1} with $B=E$, 
  $D=\BR\setminus\ol{O}$, and $q=0$, which guarantees the existence of
  a sequence $(g_m)_{m\in\N}\tm\Vperp$ with 
  \begin{equation*}
    \int_E|\ui_{g_m}|^2\dx \to \infty
    \qquad\text{and}\qquad
    \int_{\BR\setminus\ol{O}}|\ui_{g_m}|^2\dx \to 0 
    \qquad\text{as } m\to\infty \,.
  \end{equation*}
  Consequently, $\real(F_q)\not\leqfin\beta T_B$ for all $\beta\in\R$.

  On the other hand, if $q|_O\leq0$ and $q|_{E}\leq\qmaxE<0$, and if
  $\alpha T_B\leqfin\real(F_q)$ for some $\alpha\in\R$, then the
  monotonicity relation \eqref{eq:Monotonicity1c} in
  Corollary~\ref{cor:Monotonicity} with 
  $q_1=0$ and $q_2=q$ shows that there exists a finite dimensional
  subspace $V\tm\LtSd$ such that, for any $g\in\Vperp$,
  \begin{equation*}
    \begin{split}
      0
      &\,\leq\, \int_\Sd g(\ol{\real(F_q)g - \alpha T_B g})\ds
      \,\leq\, k^2 \int_\BR (q |u_{q,g}|^2 
      - \alpha \chi_B|\ui_g|^2) \dx\\
      &\,=\, k^2 \int_{\BR\setminus\ol{O}} 
      (q |u_{q,g}|^2 - \alpha \chi_B|\ui_g|^2) \dx
      + k^2 \int_{\BR\cap O} 
      (q |u_{q,g}|^2 - \alpha \chi_B|\ui_g|^2) \dx\\
      &\,\leq\, k^2 \qmax \int_{\BR\setminus\ol{O}} |u_{q,g}|^2 \dx
      + k^2  |\alpha| \int_{\BR\setminus\ol{O}} |\ui_g|^2 \dx
      + k^2 \qmaxE \int_{E} |u_{q,g}|^2 \dx \,.
    \end{split}
  \end{equation*}
  Applying Theorem~\ref{thm:LocPot2} with $D=\BR\setminus\ol{O}$,
  $q_1=0$, and $q_2=q$ we find that there exists a constant $C>0$ such
  that
  \begin{equation*}
    \begin{split}
      0
      &\,\leq\, k^2 (C\qmax + |\alpha|) 
      \int_{\BR\setminus\ol{O}} |\ui_g|^2 \dx 
      + k^2 C \qmaxE \int_{E} |\ui_g|^2 \dx \,.
    \end{split}
  \end{equation*}
  However, since $\qmaxE<0$, this contradicts Theorem~\ref{thm:LocPot1}
  with $B=E$, $D=\BR\setminus\ol{O}$, and $q=0$, which guarantees the
  existence of a sequence $(g_m)_{m\in\N}\tm V^\perp$ with 
  \begin{equation*}
    \int_E|\ui_{g_m}|^2\dx \to \infty
    \qquad\text{and}\qquad
    \int_{\BR\setminus\ol{O}}|\ui_{g_m}|^2\dx \to 0 
    \qquad\text{as } m\to\infty \,.
  \end{equation*}
  Consequently, $\alpha T_B \not\leqfin \real(F_q)$ for all $\alpha\in\R$, 
  which ends the proof of part (b).
\end{proof}

\section{Numerical examples}
\label{sec:NumericalExamples}
In the following we discuss two numerical examples for the
two-dimensional sign-definite case to illustrate the theoretical
results developed in Theorems~\ref{thm:Shape1}--\ref{thm:Shape2}.
The algorithm suggested below is preliminary, and does not immediately
extend to the indefinite case considered in Theorem~\ref{thm:Shape3}.

We assume that far field observations $\uinfty(\xhat_l;\theta_m)$
are available for $N$ equidistant observation and incident directions
\begin{equation}
  \label{eq:ObservationGrid}
  \xhat_l,\theta_m 
  \in \{(\cos\phi_n,\sin\phi_n)\in S^1 \;|\;
  \phi_n=(n-1)2\pi/N \,,\; n=0,\ldots,N-1 \} \,, 
\end{equation}
$1\leq l,m\leq N$.
Accordingly, the matrix
\begin{equation}
  \label{eq:FMatrix}
  \bfF_q 
  \,=\, \frac{2\pi}N [\uinfty(\xhat_l;\theta_m)]_{1\leq l,m\leq N} 
  \in \C^{N\times N} \,,
\end{equation}
approximates the far field operator $F_q$ from
\eqref{eq:FarfieldOperator}. 
If the support of the contrast function $q$, i.e., of the scattering
objects, is contained in the ball $\BR$ for some~$R>0$, then it is
appropriate to choose 
\begin{equation}
  \label{eq:SamplingCondition}
  N \,\gtrsim\, 2 kR \,.
\end{equation}
where as before $k$ denotes the wave number, to fully resolve the
relevant information contained in the far field patterns (see, e.g.,
\cite{griesmaier2017uncertainty1}). 

We consider an equidistant grid on the region of interest
\begin{equation}
  \label{eq:Grid}
  [-R,R]^2 \,=\, \bigcup_{j=1}^JP_j \,, \qquad R>0 \,,
\end{equation}
with quadratic pixels $P_j = z_j + [-\frac{h}2,\frac{h}2]^2$,
$1\leq j\leq J$, where $z_j\in\R^2$ denotes the center of $P_j$ and
$h$ is its side length.
In this case a short computation shows that for each pixel $P_j$ the
operator $T_{P_j}$ from \eqref{eq:DefTB1} is approximated by the
matrix 
\begin{equation*}
  \bfT_{P_j}
  = \frac{2\pi}N 
  \Bigl[\! (kh)^2
  e^{\rmi k z_j\cdot(\theta_m-\theta_l)}
  \sinc\Bigl(\frac{kh}{2}(\theta_m-\theta_l)_1\Bigr)
  \sinc\Bigl(\frac{kh}{2}(\theta_m-\theta_l)_2\Bigr)
  \!\Bigr]_{1\leq l,m\leq N} 
  \!\in \C^{N\times N} .
\end{equation*}
Therewith, we compute the eigenvalues
$\lambda_1^{(j)},\ldots,\lambda_N^{(j)}\in\R$ of the self-adjoint
matrix 
\begin{equation}
  \label{eq:AMatrix}
  \bfA_{P_j}
  \,=\, \sign(q) (\real(\bfF_q) - \alpha \bfT_{P_j}) \,, 
  \qquad 1\leq j\leq J \,.
\end{equation}

For numerical stabilization, we discard those eigenvalues whose
absolute values are smaller than some threshold.
This number depends on the quality of the data.
If there are good reasons to believe that $\bfA_{P_j}$ is known up to
a perturbation of size~$\delta>0$ (with respect to the spectral norm),
then we can only trust in those eigenvalues with magnitude larger than
$\delta$ (see, e.g., \cite[Thm.~7.2.2]{GolVan96}).
To obtain a reasonable estimate for $\delta$, we use the
magnitude of the non-unitary part of 
$\bfS_q := (\bfI_N+\rmi/(4\pi)\bfF_q)$, i.e. we take 
$\delta = \|\bfS_q^*\bfS_q-\bfI_N\|_2$, since this quantity should be
zero for exact data and be of the order of the data error, otherwise.

Assuming that the contrast function $q$ is either larger or 
smaller than zero a.e.\ in $\supp(q)$, and that the parameter
$\alpha\in\R$ satisfies the conditions in part (a) of
Theorems~\ref{thm:Shape1} or~\ref{thm:Shape2}, respectively, we then
simply count for each pixel $P_j$ the number of negative eigenvalues
of $\bfA_{P_j}$, and define the \emph{indicator function} 
$I_\alpha:[-R,R]^2\to\N$,
\begin{equation}
  \label{eq:Itilde}
  I_\alpha(x) 
  \,=\, \# \{\lambda_n^{(j)} \;|\; 
  \lambda_n^{(j)}<-\delta \,,\; 1\leq n\leq N \} \,,
  \qquad \text{if } x\in P_j \,.
\end{equation}
Theorems~\ref{thm:Shape1}--\ref{thm:Shape2} suggest that $I_\alpha$ is
larger on pixels $P_j$ that do not intersect the support $\supp(q)$ of
the scattering object than on pixels $P_j$ contained in $\supp(q)$. 

\begin{example}
\label{exa:1}
  \begin{figure}[t]
    \centering
    \includegraphics[height=3.4cm]{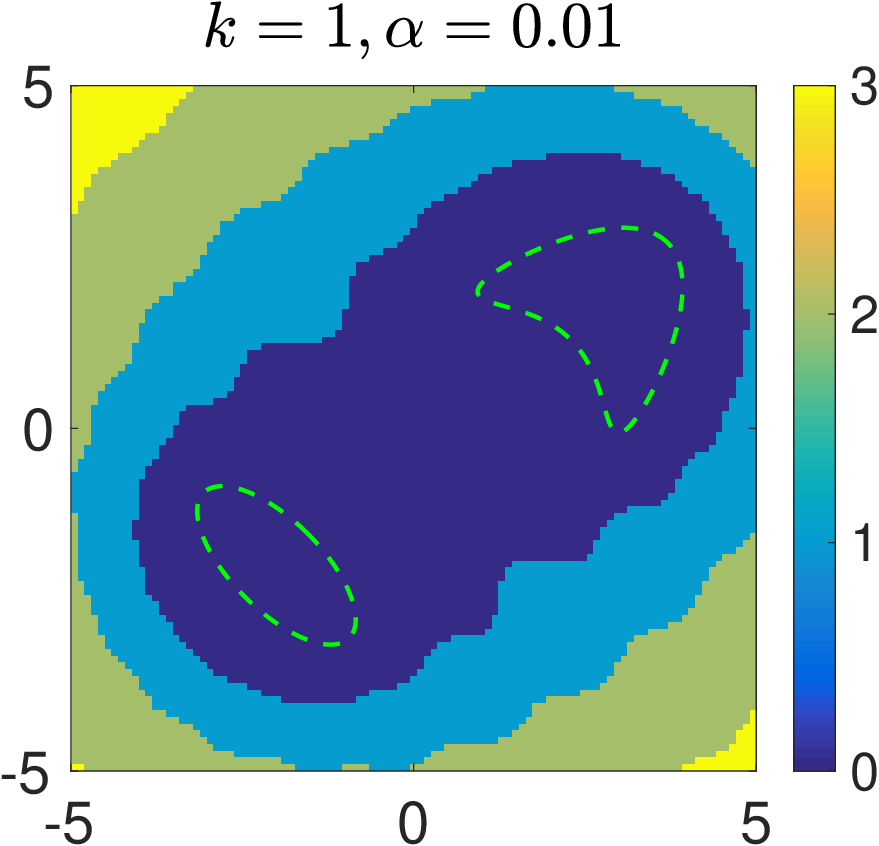}
    \hspace*{1.0em}
    \includegraphics[height=3.4cm]{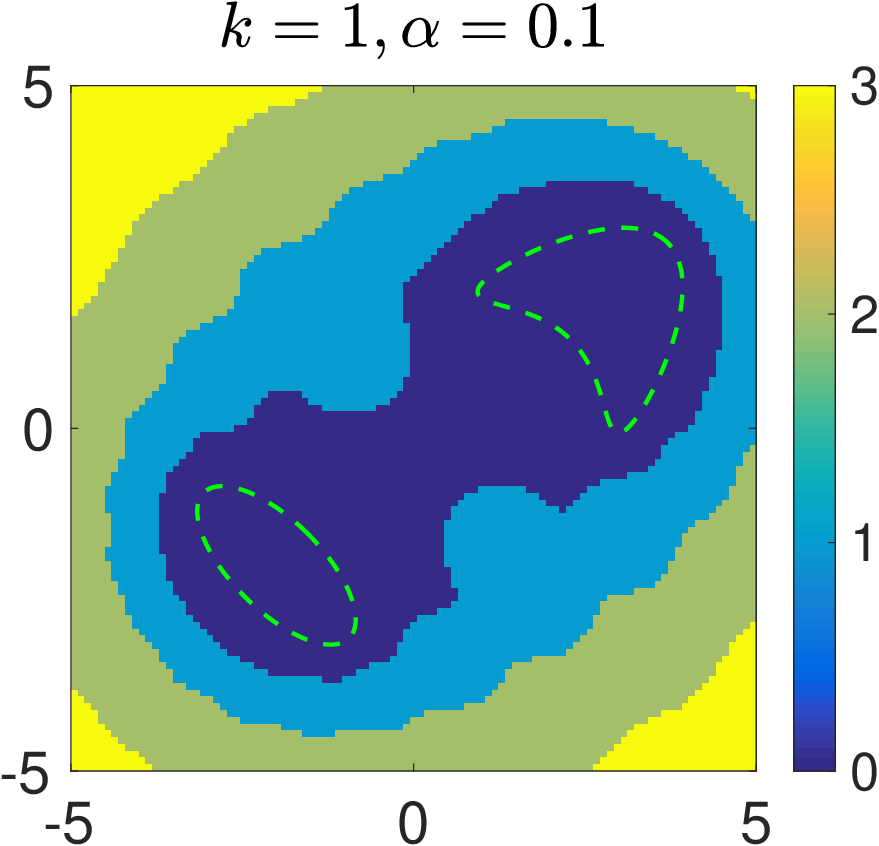}
    \hspace*{1.0em}
    \includegraphics[height=3.4cm]{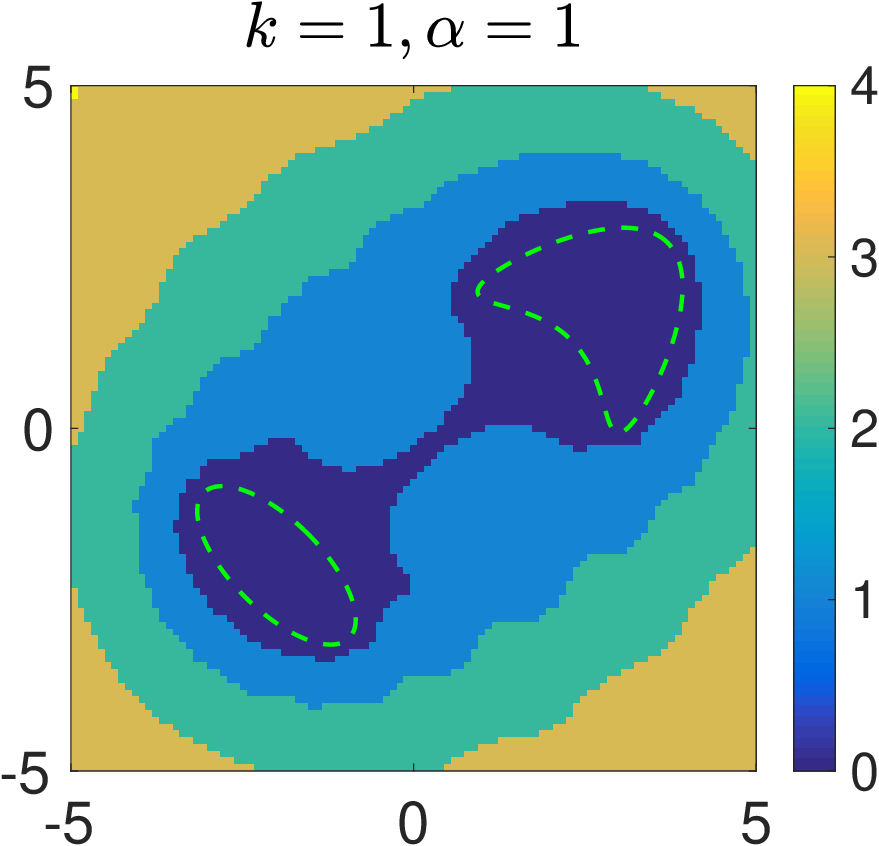}\\[0.5em]
    \includegraphics[height=3.4cm]{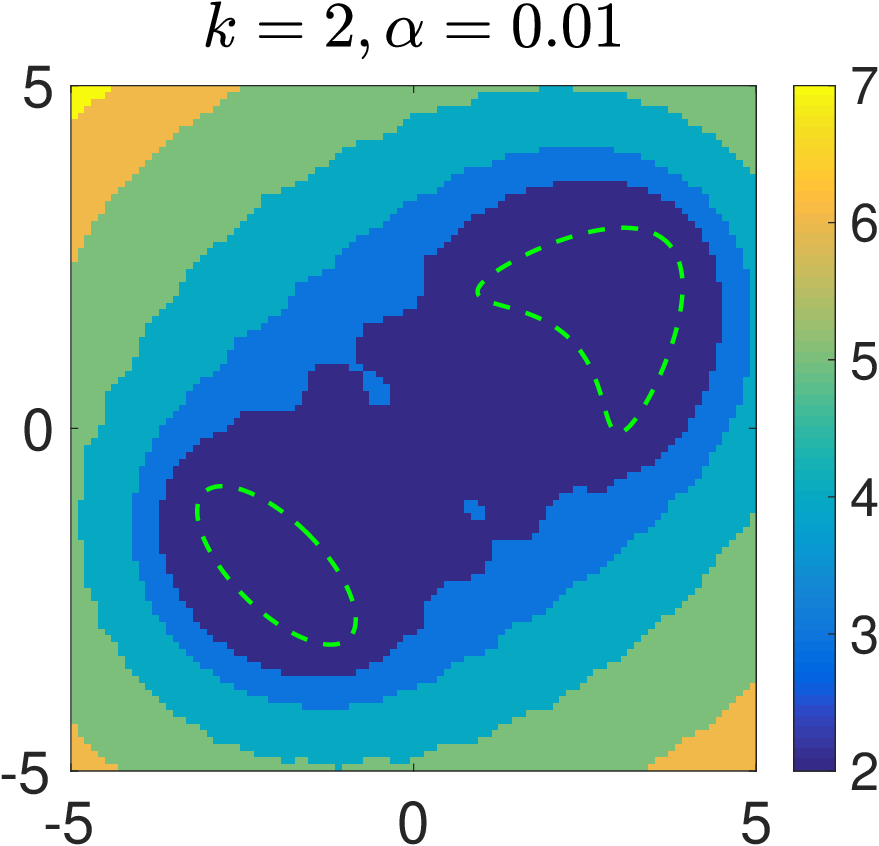}
    \hspace*{1.0em}
    \includegraphics[height=3.4cm]{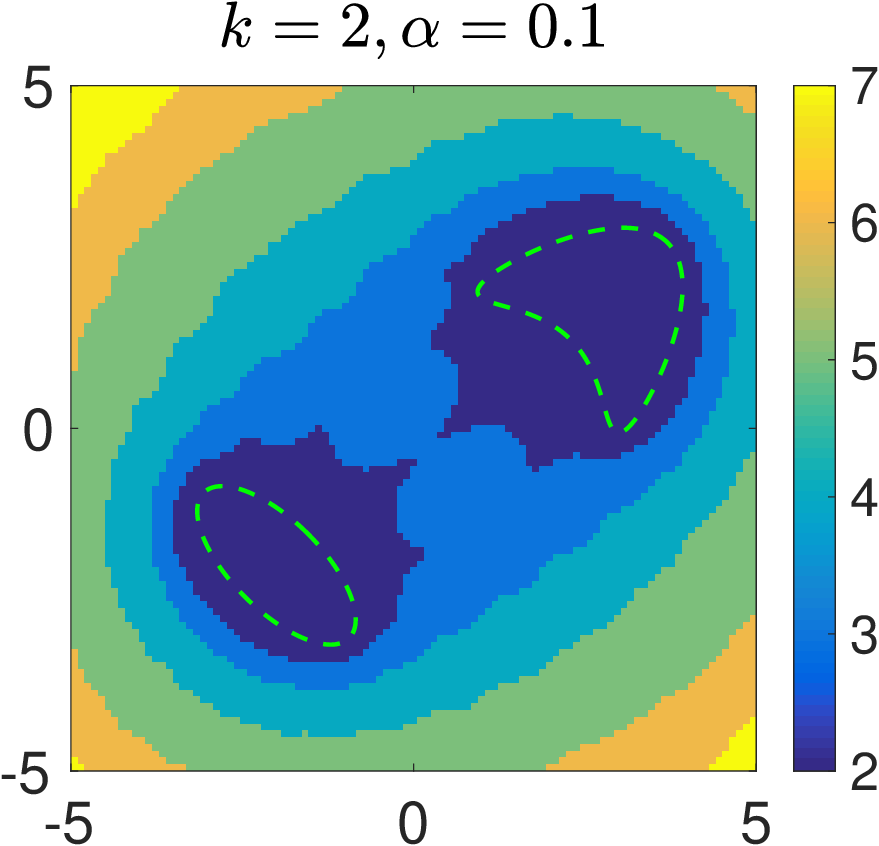}
    \hspace*{1.0em}
    \includegraphics[height=3.4cm]{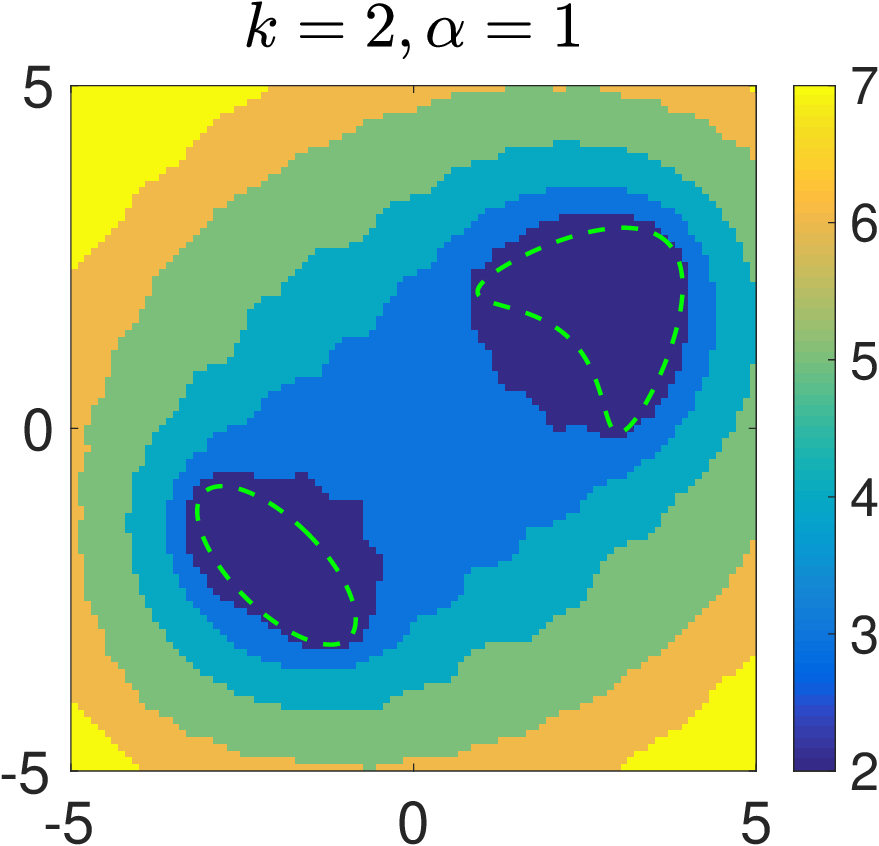}\\[0.5em]
    \hspace*{-0.1em}
    \includegraphics[height=3.4cm]{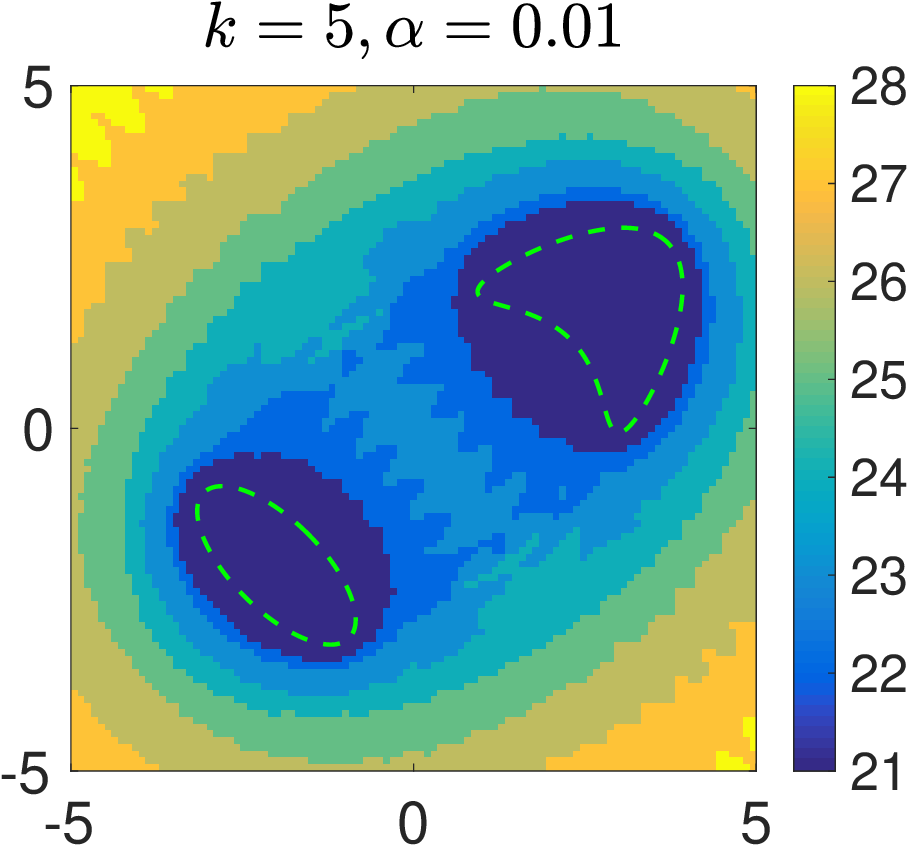}
    \hspace*{0.7em}
    \includegraphics[height=3.4cm]{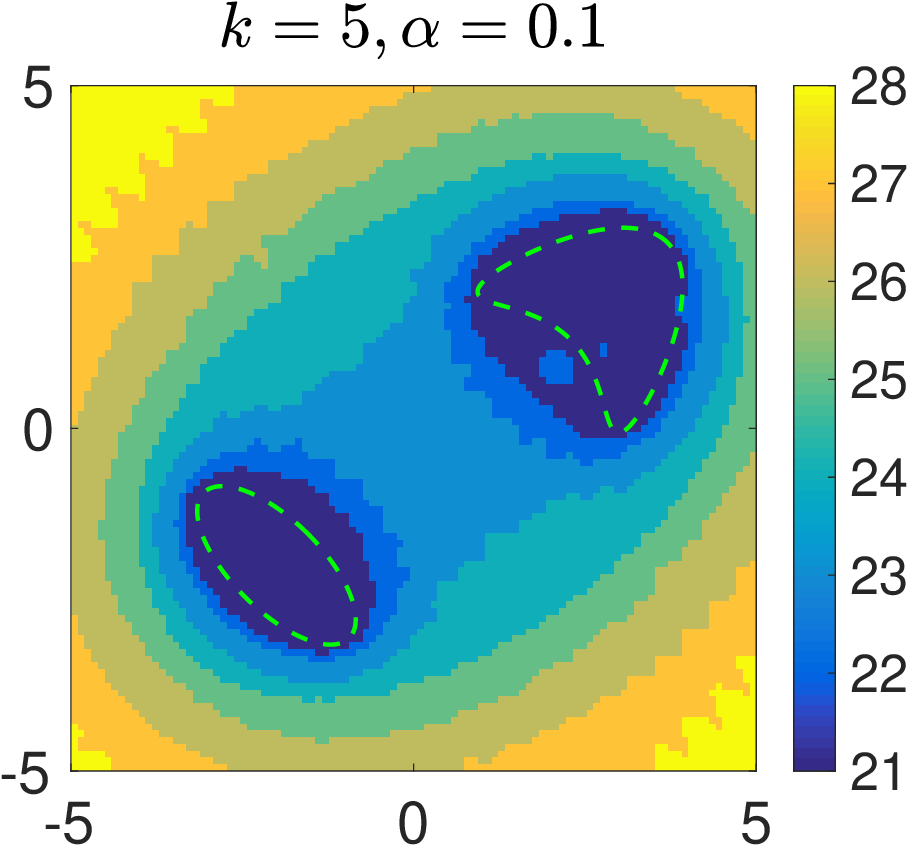}
    \hspace*{0.7em}
    \includegraphics[height=3.4cm]{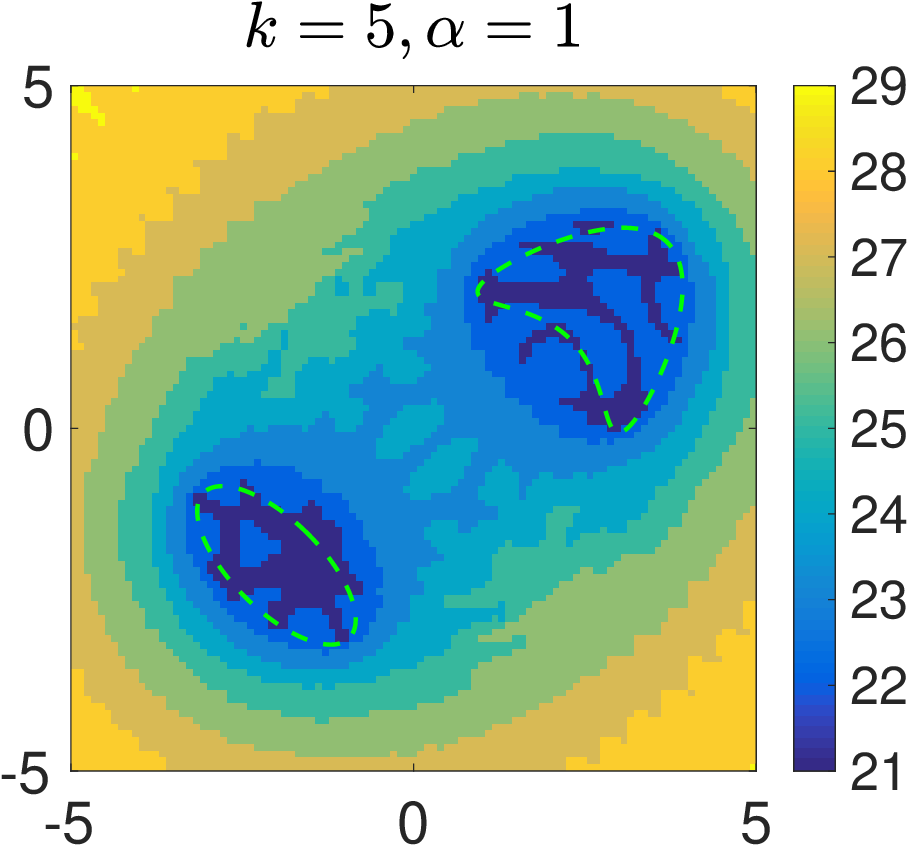}
    \caption{\small Scatterers with positive contrast functions: 
      Visualization of the indicator function $I_\alpha$
      for three different parameters
      $\alpha=0.01,0.1$, and $\alpha=1$ (left to right) and three different
      wave numbers $k=1,2$, and $k=5$ (top down).  Exact shape of the
      scatterers is shown as dashed lines.
    } 
    \label{fig:Exa1-1}
  \end{figure}

  We consider two penetrable scatterers, a kite and an ellipse, with
  positive constant contrast functions $q=1$ (kite) and $q=2$
  (ellipse) as sketched in Figure~\ref{fig:Exa1-1} (dashed lines), and
  simulate the corresponding far field matrix
  $\bfF_q\in\C^{64\times64}$ for $N=64$ observation and incident
  directions as in \eqref{eq:ObservationGrid} using a Nystr\"om method
  for a boundary integral formulation of the scattering problem with
  three different wave numbers $k=1,2$, and $k=5$. 

  In Figure~\ref{fig:Exa1-1}, we show color coded plots of the
  indicator function $I_\alpha$ from \eqref{eq:Itilde} with threshold
  parameter $\delta=10^{-14}$ (i.e., the number of negative
  eigenvalues smaller than $-\delta=-10^{-14}$ of the matrix
  $\bfA_{P_j}$ from 
  \eqref{eq:AMatrix} on each pixel $P_j$) in the region of interest
  ${[-5,5]^2\tm\R^2}$ for three different parameters
  $\alpha=0.01,0.1$, and $\alpha=1$ (left to right) and three different
  wave numbers $k=1,2$, and $k=5$ (top down).
  The equidistant rectangular sampling grid on the region of interest
  from~\eqref{eq:Grid} consists of~$100$ pixels in each direction.

  Overall, the number of negative eigenvalues of the matrix
  $\bfA_{P_j}$ increases with increasing wave number, and it is larger
  on pixels $P_j$ sufficiently far away from the support of the
  scatterers than on pixels $P_j$ inside, as suggested by
  Theorems~\ref{thm:Shape1}--\ref{thm:Shape2}.
  The lower value always coincides with the number of negative
  eigenvalues of the real part $\real(\bfF_q)$ of the far field matrix
  from \eqref{eq:FMatrix} that are smaller than the threshold~$-\delta$. 
  The number of eigenvalues of $\bfA_{P_j}$, $j=1,\ldots,J$, whose
  absolute values are larger than~$\delta$ is approximately (on
  average) $25$ (for $k=1$), $36$ (for $k=2$), and $62$ (for $k=5$),
  independent of $\alpha$. 

  If the parameter $\alpha$ is suitably chosen, depending on the wave
  number, then the lowest level set of the indicator function
  $I_\alpha$ nicely approximates the support of the two scatterers.
\hfill$\lozenge$
\end{example}

\begin{example}
\label{exa:2}
  \begin{figure}[t]
    \centering
    \includegraphics[height=3.4cm]{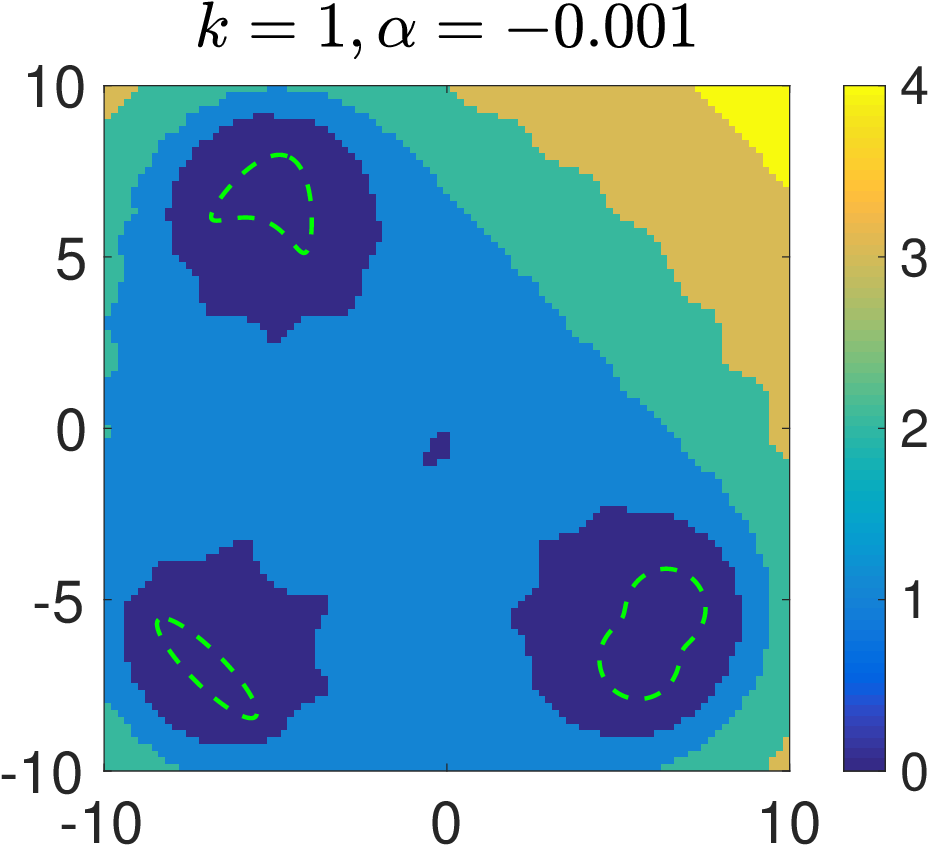}
    \hspace*{1.0em}
    \includegraphics[height=3.4cm]{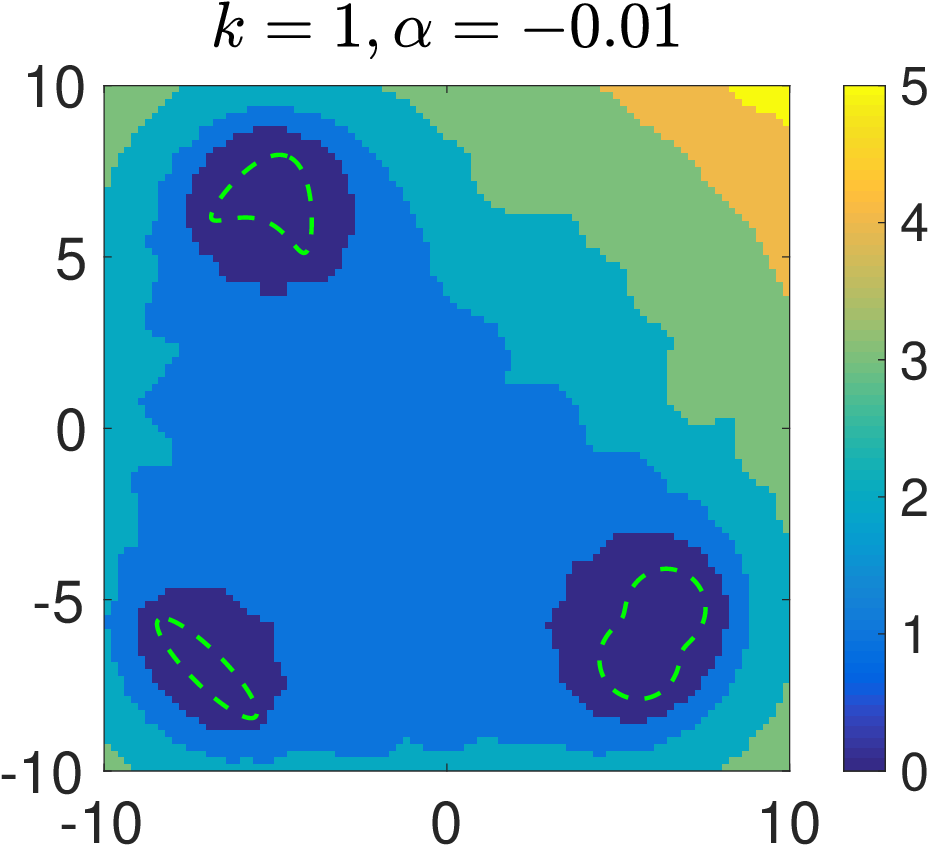}
    \hspace*{1.0em}
    \includegraphics[height=3.4cm]{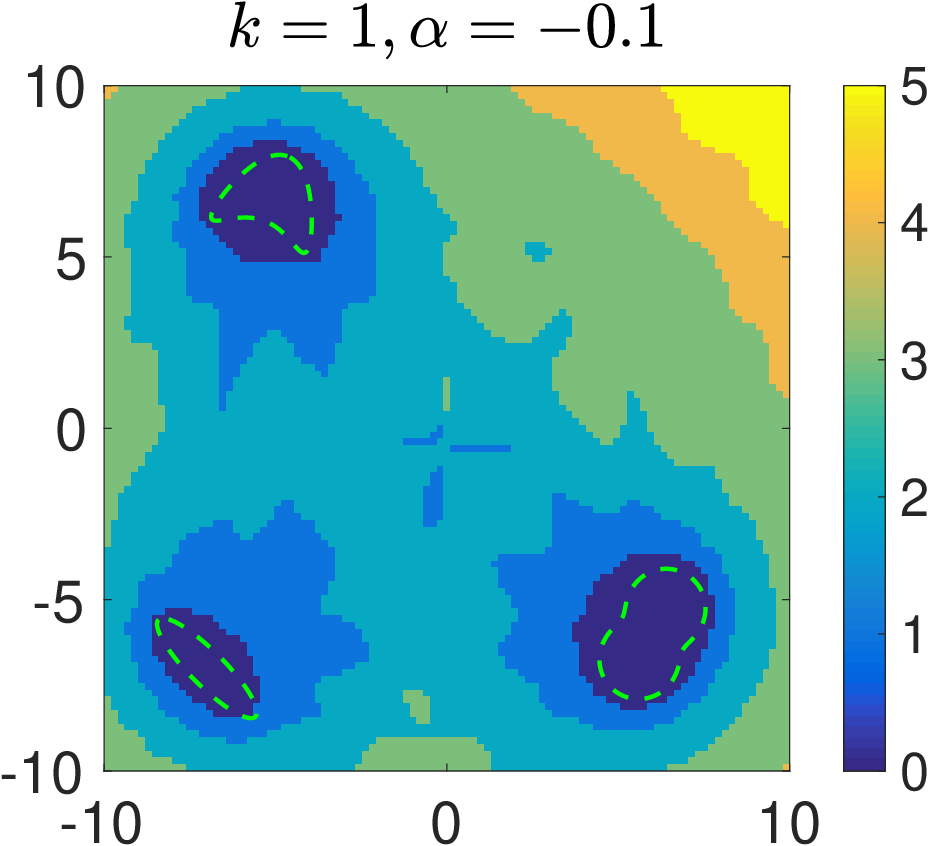}\\[0.5em]
    \includegraphics[height=3.4cm]{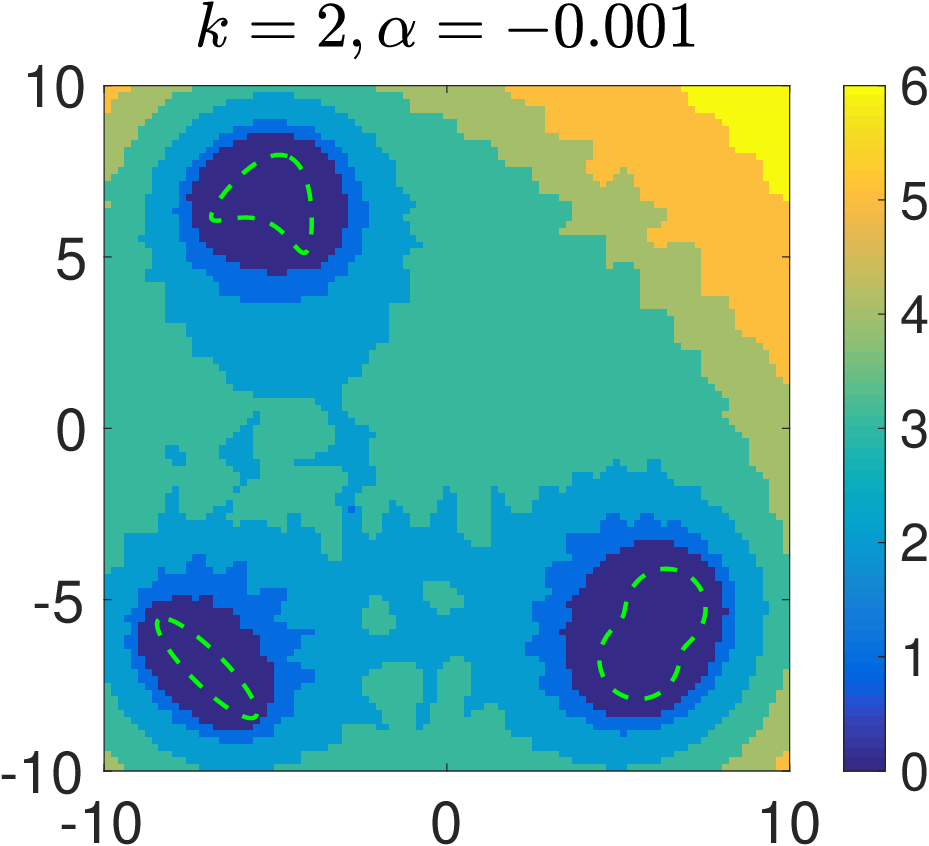}
    \hspace*{1.0em}
    \includegraphics[height=3.4cm]{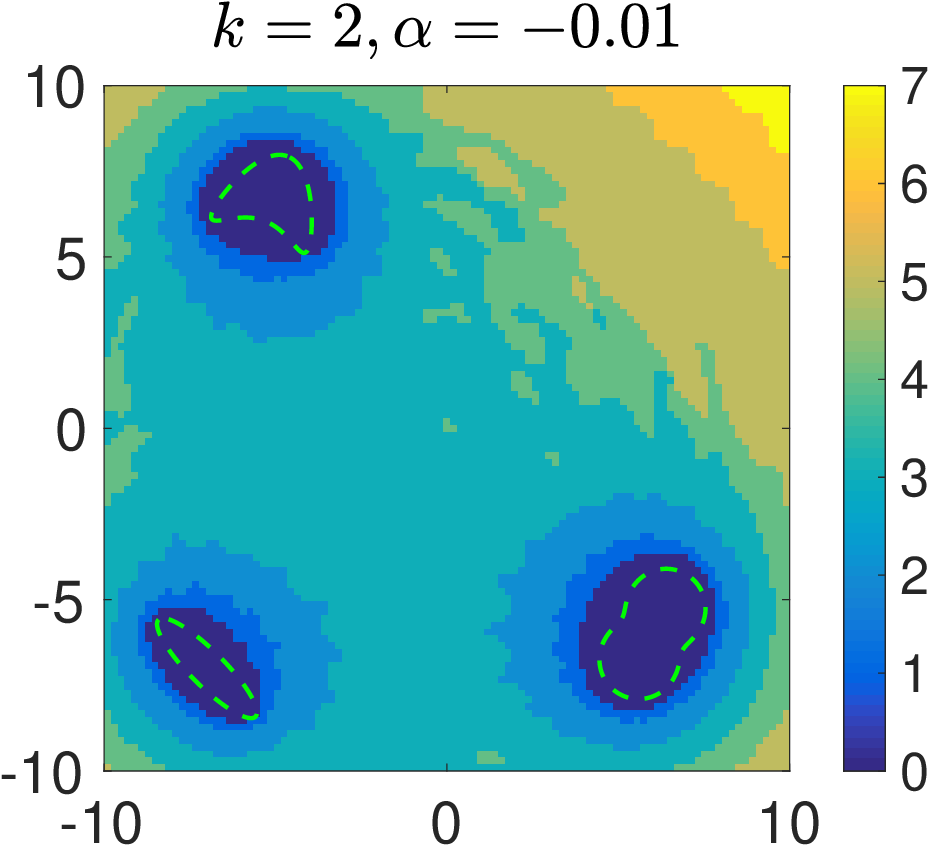}
    \hspace*{1.0em}
    \includegraphics[height=3.4cm]{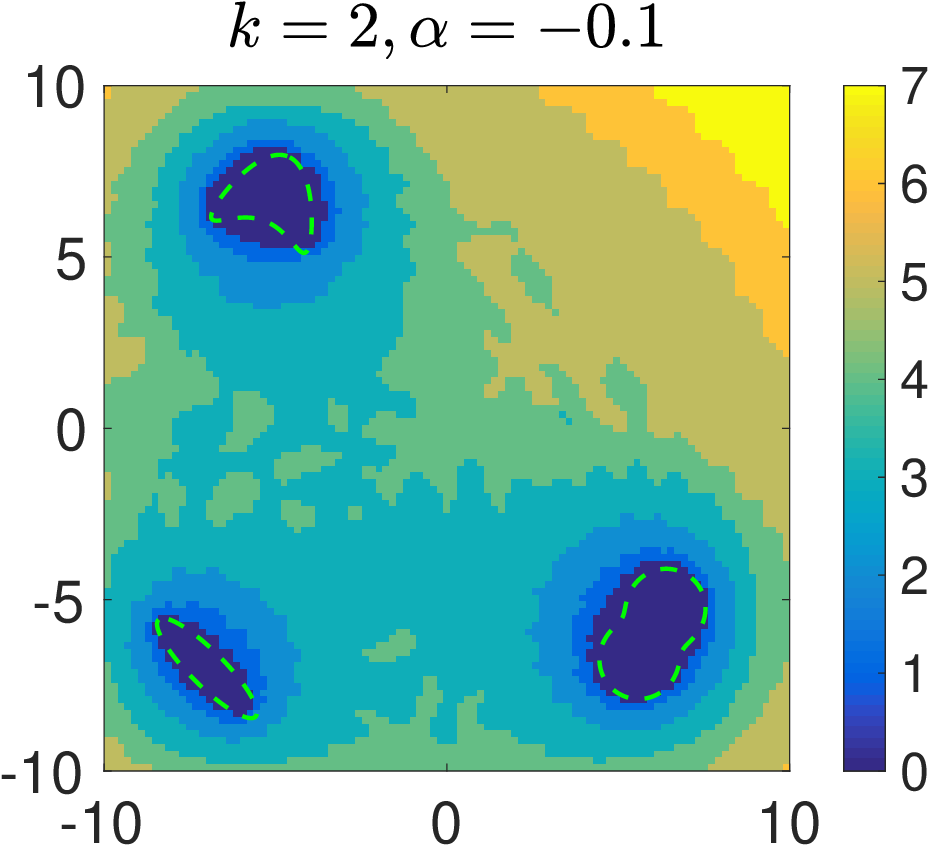}\\[0.5em]
    \hspace*{-0.1em}
    \includegraphics[height=3.4cm]{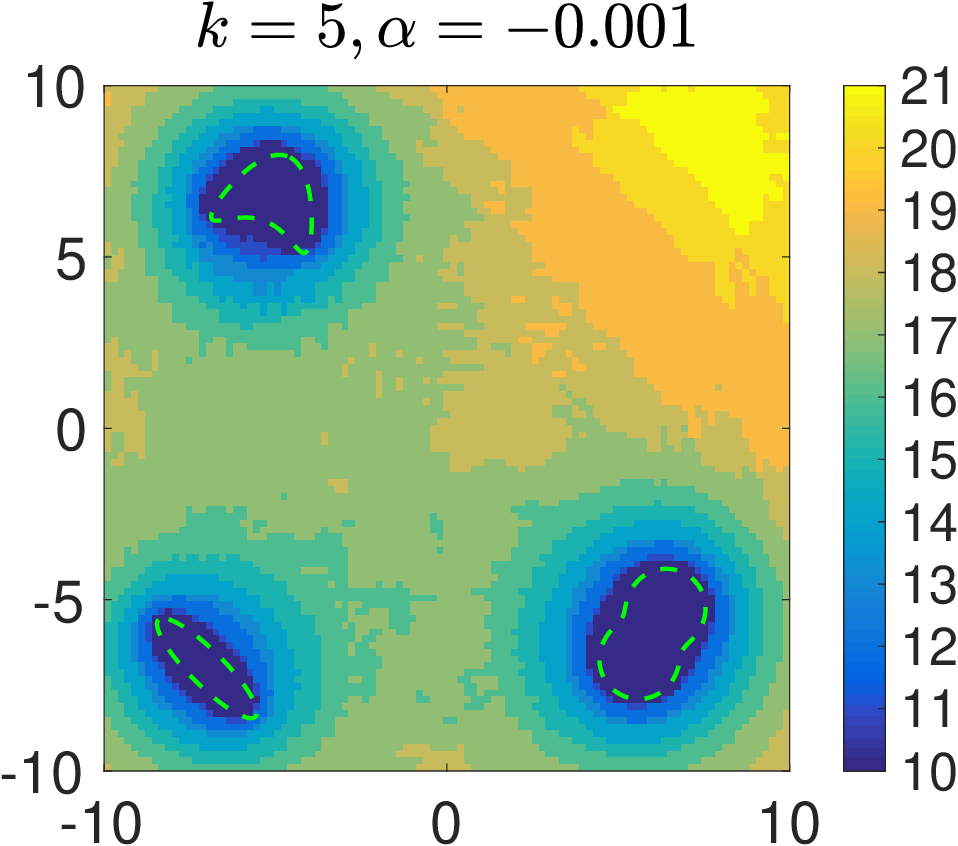}
    \hspace*{0.7em}
    \includegraphics[height=3.4cm]{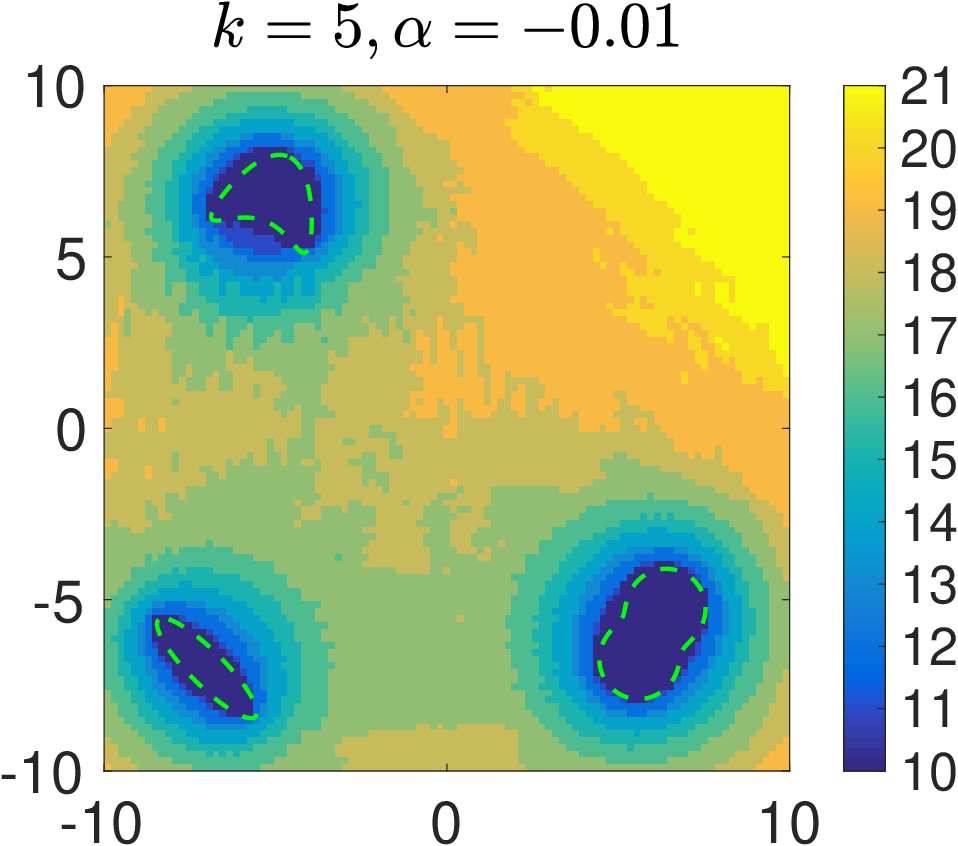}
    \hspace*{0.7em}
    \includegraphics[height=3.4cm]{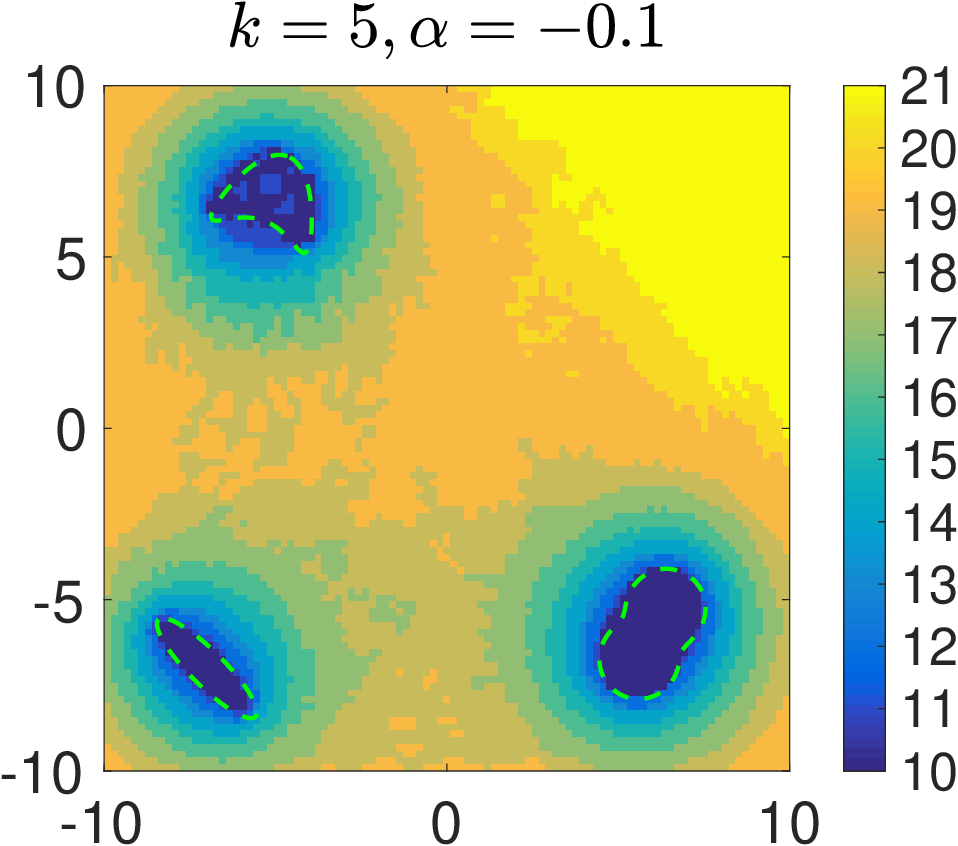}
    \caption{\small Scatterers with negative contrast functions: 
      Visualization of the indicator function $I_\alpha$
      for three different parameters $\alpha=-0.001,-0.01$, and
      $\alpha=-0.1$ (left to right) and three different wave numbers
      $k=1,2$, and $k=5$ (top down).  Exact shape of the scatterers is
      shown as dashed lines.
    } 
    \label{fig:Exa2-1}
  \end{figure}
  
  In the second example, we consider three penetrable scatterers, a
  kite, an ellipse and a nut-shaped scatterer, with negative
  constant contrasts $q=-0.8$ (kite), $q=-0.4$ (nut), and $q=-0.2$
  (ellipse) as sketched in Figure~\ref{fig:Exa2-1} (dashed lines), and
  simulate the corresponding far field matrix
  $\bfF_q\in\C^{128\times128}$ for $N=128$ observation and incident
  directions for three different wave numbers $k=1,2$, and $k=5$. 
  We increase the number of discretization points because the diameter
  of the support of this configuration of scattering objects is
  roughly twice as large as in the previous example (i.e., to fulfill
  the sampling condition \eqref{eq:SamplingCondition}). 

  In Figure~\ref{fig:Exa1-1}, we show color coded plots of the
  indicator function $I_\alpha$ from \eqref{eq:Itilde} with threshold
  parameter $\delta=10^{-14}$ in the region of interest 
  ${[-10,10]^2\tm\R^2}$ for three different parameters
  $\alpha=-0.001,-0.01$, and $\alpha=-0.1$ (left to right) and three
  different wave numbers $k=1,2$, and $k=5$ (top down).
  The equidistant rectangular sampling grid on the region of interest
  from~\eqref{eq:Grid} on this region of interest consists of $100$
  pixels in each direction.

  Again, the number of negative eigenvalues of the matrix $\bfA_{P_j}$
  increases with increasing wave number, and it is larger on pixels
  $P_j$ sufficiently far away from the support of the scatterers than
  on pixels $P_j$ inside, in compliance with
  Theorems~\ref{thm:Shape1}--\ref{thm:Shape2}. 
  The lower value always coincides with the number of positive
  eigenvalues of the matrix $\real(\bfF_q)$ from \eqref{eq:FMatrix}
  that are larger than the threshold $\delta=10^{-14}$.
  The number of eigenvalues of $\bfA_{P_j}$, $j=1,\ldots,J$, whose
  absolute values are larger than $\delta$ is
  approximately (on average) $39$ (for $k=1$), $60$ (for $k=2$), and
  $115$ (for $k=5$), independent of $\alpha$.

  If the parameter $\alpha$ is suitably chosen, depending on the wave
  number, then the support of the indicator function $I_\alpha$
  approximates the support of the three scatterers rather well.
\hfill$\lozenge$
\end{example}

An efficient and suitably regularized numerical implementation of the
theoretical results developed in
Theorems~\ref{thm:Shape1}--\ref{thm:Shape3} is beyond the scope of
this article, and the preliminary algorithm discussed in this section
cannot be considered competitive when compared against
state-of-the-art implementations of linear sampling or factorization
methods. 
The numerical results in the Examples~\ref{exa:1}--\ref{exa:2} have
been obtained for highly accurate simulated far field data.
Further numerical tests showed that the algorithm is rather
sensitive to noise in the data.

\section*{Conclusions}
We have derived new monotonicity relations for the far field operator
for the inverse medium scattering problem with compactly supported
scattering objects, and we used them to provide novel monotonicity
tests to determine the support of unknown scattering objects from
far field observations of scattered waves corresponding to infinitely
many plane wave incident fields. 
Along the way we have shown the existence of localized wave functions
that have arbitrarily large norm in some prescribed region while
having arbitrarily small norm in some other prescribed region. 

When compared to traditional qualitative reconstructions methods,
advantages of these new characterizations are that they apply to 
indefinite scattering configurations. 
Moreover, these characterizations are independent of transmission
eigenvalues.
However, although we presented some preliminary numerical examples for
the sign definite case, a stable numerical implementation of these
monotonicity tests still needs to be developed.

\section*{Acknowledgments}
This research was initiated at the Oberwolfach Workshop
``Computational Inverse Problems for Partial Differential Equations''
in May 2017 organized by Liliana Borcea, Thorsten Hohage, and Barbara
Kaltenbacher.  We thank the organizers and the Oberwolfach Research
Institute for Mathematics~(MFO) for the kind invitation.

  \bibliographystyle{abbrv}
  \bibliography{literaturliste}



\newpage

\thispagestyle{empty}
\phantom{empty page}

\newpage

\thispagestyle{empty}

\begin{center}
{\bfseries\MakeUppercase{Erratum: Monotonicity in inverse medium scattering on unbounded domains}}\\[+2ex]

\footnotesize
\MakeUppercase{Roland Griesmaier}\footnote[1]{Institut f\"ur Angewandte und Numerische Mathematik, Karlsruher Institut f\"ur Technologie, 76049 Karlsruhe, Germany (\email{roland.griesmaier@kit.edu}).} 
and
\MakeUppercase{Bastian Harrach}\footnote[2]{Institut f\"ur Mathematik, Universit\"at Frankfurt, 60325 Frankfurt am Main, Germany (\email{harrach@math.uni-frankfurt.de}).}\\[+2ex]
\end{center}

\let\thefootnote\relax\footnotetext{\hrule \vspace{1ex} \centering This is a preprint version of an article
that is accepted for publication in \emph{SIAM J. Appl. Math.}
}

\setcounter{page}{1}
\setcounter{section}{0}

\begin{abstract}
  We correct a mistake in the proof of Theorem~5.3 in 
  [R.~Griesmaier and B.~Harrach. SIAM J. Appl. Math., 
  78(5):2533--2557, 2018].
\end{abstract}

\begin{keywords}
  Inverse scattering, Helmholtz equation, monotonicity,
  far field operator, inhomogeneous medium
\end{keywords}

\begin{AMS}
  35R30, 65N21
\end{AMS}

\section{An error in the proof of Theorem~5.3 in {[3]}}
\label{erratum:sec:err}
At the end of the proof of Theorem~5.3 in \protect\citeerratum{GriHar18} 
``Applying Theorem~4.5 with $D=\BR\setminus\ol{O}$, $q_1=0$, and
$q_2=q$~\ldots'' is not possible, because the assumption of
Theorem~4.5 in \protect\citeerratum{GriHar18} that
$q_1(x)=q_2(x)$ for a.e.~$x\in\Rd\setminus\ol{D}$ is not satisfied for
this choice of $D$, $q_1$ and $q_2$.

To fix this issue we will extend the results on localized wave
functions from Section~4 of \protect\citeerratum{GriHar18} in
Section~\ref{erratum:sec:localized} below.  
Then, in Section~\ref{erratum:sec:correction} we will reformulate Theorem~5.3
of \protect\citeerratum{GriHar18}, making stronger assumptions on the domains and on
the index of refraction, and we will correct the final argument in the
original proof in \protect\citeerratum{GriHar18}.

\section{Simultaneously localized wave functions}
\label{erratum:sec:localized}
We establish the existence of 
simultaneously localized wave functions that have arbitrarily
large norm on some prescribed region~$E\tm\Rd$ while at the same time
having arbitrarily small norm in a different region $M\tm\Rd$,
assuming among others that $\Rd\setminus(\ol{E}\cup\ol{M})$ is
connected. 
The result generalizes Theorem~4.1 in \protect\citeerratum{GriHar18} in the sense
that we not only control the total field but also the incident field.
Similar results have recently been established for the Schr\"odinger
equation in \citeerratum[Thm.~3.11]{harrach2020monotonicity} and for the
Helmholtz obstacle scattering problem in \citeerratum[Thm.~4.5]{AlbGri20}. 

\begin{theorem}
  \label{erratum:thm:LocPot1}
  Suppose that $q\in\LinftyCRd$, and let $E,M\tm\Rd$ be open and
  Lipschitz bounded such that $\supp(q)\tm\ol{E}\cup\ol{M}$, 
  $\Rd\setminus(\ol{E}\cup\ol{M})$ is connected, and 
  $E\cap M=\emptyset$. 
  Assume furthermore that there is a connected subset
  $\Gamma\tm\di E\setminus\ol{M}$ that is relatively open and
  $C^{1,1}$ smooth. 

  Then for any finite dimensional subspace
  $V\tm\LtSd$ there exists a sequence $(g_m)_{m\in\N}\tm\Vperp$ such
  that 
  \begin{equation*}
    \int_E |u_{q,g_m}|^2\dx \to \infty 
    \qquad\text{and}\qquad
    \int_M \bigl( |u_{q,g_m}|^2 + |\ui_{g_m}|^2 \bigr) \dx \to 0 
    \qquad \text{as } m\to\infty \,,
  \end{equation*}
  where $\ui_{g_m}, u_{q,g_m}\in \Heinsloc$ are given by
  {\rm (2.8a)--(2.8b)} in {\rm \protect\citeerratum{GriHar18}} with $g=g_m$.
\end{theorem}

The proof of Theorem~\ref{erratum:thm:LocPot1} relies on the following
three lemmas. 

\begin{lemma}
  \label{erratum:lmm:LocPot1}
  Suppose that $q\in\LinftyCRd$, let $n^2=1+q$, and assume that
  $D\tm\Rd$ is open and bounded.  We define  
  \begin{equation*}
    L_{q,D}:\LtSd\to H^1(D) \,, \quad g\mapsto u_{q,g}|_D \,,
  \end{equation*}
  where $u_{q,g}\in \Heinsloc$ is given by {\rm (2.8b)}
  in~{\rm\citeerratum{GriHar18}}. 
  Then $L_{q,D}$ is a linear operator and its adjoint is given by
  \begin{equation*}
    L_{q,D}^*: H^1(D)^*\to\LtSd \,, \quad f\mapsto \Scal_q^*w^\infty \,,
  \end{equation*}
  where $H^1(D)^*$ is the dual of $H^1(D)$, $\Scal_q^*$ denotes the
  adjoint of the scattering operator from {\rm (2.7)}
  in~{\rm\citeerratum{GriHar18}}, and ${w^\infty\in\LtSd}$ is the far field
  pattern of the radiating solution ${w\in\Heinsloc}$ to 
  \begin{equation}
    \label{erratum:eq:Defw}
    \Delta w+k^2n^2w \,=\, -f \qquad\text{in }\Rd \,.
  \end{equation}
\end{lemma}

\begin{proof}
  This follows from the same arguments that have been used in the
  proof of Lemma~4.2 in \protect\citeerratum{GriHar18}.
\end{proof}

\begin{lemma}
  \label{erratum:lmm:LocPot2}
  Suppose that $q\in\LinftyCRd$, and let $E,M\tm\Rd$ be open and
  Lipschitz bounded such that $\supp(q)\tm\ol{E}\cup\ol{M}$, 
  $\Rd\setminus(\ol{E}\cup\ol{M})$ is connected, and 
  $E\cap M=\emptyset$. 
  Assume furthermore that there is a connected subset
  $\Gamma\tm\di E\setminus\ol{M}$ that is relatively open and
  $C^{1,1}$ smooth. 
  Then,
  \begin{equation*}
    \Rcal(L_{q,E}^*)\not\tm \Rcal\bigl(
    \begin{pmatrix}
      L_{q,M}^* & L_{0,M}^*
    \end{pmatrix}\bigr)
  \end{equation*}
  and there exists an infinite dimensional subspace
  $Z\tm\Rcal(L_{q,E}^*)$ such that
  \begin{equation*}
    Z \cap \Rcal\bigl(
    \begin{pmatrix}
      L_{q,M}^* & L_{0,M}^*
    \end{pmatrix}\bigr) \,=\, \{0\} \,.
  \end{equation*}
\end{lemma}

\begin{proof}
  Let $h\in \Rcal(L_{q,E}^*)\cap \Rcal\bigl(
  \begin{pmatrix}
    L_{q,M}^* & L_{0,M}^*
  \end{pmatrix}\bigr)$.  
  Then
  Lemma~\ref{erratum:lmm:LocPot1} shows that there exist $f_{q,E}\in H^1(E)^*$ and
  $f_{q,M}, f_{0,M}\in H^1(M)^*$ such that the far field 
  patterns $w^\infty_{q,E}, w^\infty_{q,M}, w^\infty_{0,M}$ of the
  radiating solutions $w_{q,E},w_{q,M},w_{0,M}\in\Heinsloc$ to 
  \begin{align*}
    \Delta w_{q,E}+k^2(1+q)w_{q,E} &\,=\, -f_{q,E} &&\text{in } \Rd \,,\\
    \Delta w_{q,M}+k^2(1+q)w_{q,M} &\,=\, -f_{q,M} &&\text{in } \Rd \,,\\
    \Delta w_{0,M}+k^2w_{0,M} &\,=\, -f_{0,M} &&\text{in } \Rd \,,
  \end{align*}
  satisfy
  \begin{equation*}
    h 
    \,=\, \Scal_q^* w^\infty_{q,E} 
    \,=\, w^\infty_{0,M} + \Scal_q^* w^\infty_{q,M} 
    \,.
  \end{equation*}
  Here we used that $\Scal_0$ is the identity operator.
  Accordingly, using the definition of the scattering operator in
  (2.7) of \protect\citeerratum{GriHar18}, we find that
  \begin{equation*}
    \begin{split}
      0
      &\,=\, w^\infty_{q,E} - w^\infty_{q,M} - \Scal_q w^\infty_{0,M}\\ 
      &\,=\, w^\infty_{q,E} - w^\infty_{q,M} 
      - w^\infty_{0,M} - 2\rmi k|C_d|^2F_q w^\infty_{0,M}\\ 
      &\,=\, w^\infty_{q,E} - (w^\infty_{q,M} 
      + w^\infty_{0,M} + v^\infty_{q}) \,,
    \end{split}
  \end{equation*}
  where $v^\infty_{q}$ is the far field of a radiating solution 
  $v_{q}\in\Heinsloc$ to
  \begin{equation*}
    \Delta v_{q}+k^2(1+q)v_{q} \,=\, 0 \qquad\text{in } \Rd \,.
  \end{equation*}
  Since $\supp(q)\tm\ol{E}\cup\ol{M}$ and
  $\Rd\setminus(\ol{E}\cup\ol{M})$ is connected, 
  Rellich's lemma and unique continuation guarantee that
  \begin{equation}
    \label{erratum:eq:IdentityWs}
    w_{q,E} - (w_{q,M} + w_{0,M} + v_{q}) \,=\, 0 \qquad 
    \text{in } \Rd\setminus(\ol{E}\cup\ol{M})
  \end{equation}
  (cf., e.g., \citeerratum[Thm.~2.14]{erratumcolton2013inverse}).
  
  Next we discuss the regularity of the traces of $w_{q,E}$ and
  $w_{q,M}+w_{0,M}+v_{q}$ at the boundary segment 
  $\Gamma \tm \di E\setminus\ol{M}$.
  W.l.o.g.\ we may assume that $\Gamma$ is bounded away from~$\ol{M}$. 
  Since $\supp(f_{q,M}+f_{0,M})\tm\ol{M}$, interior regularity results
  (see, e.g., \citeerratum[Thm.~4.18]{McL00}) show that 
  $(w_{q,M}+w_{0,M}+v_{q})\big|_{\Gamma}\in H^{\frac32}(\Gamma)$.
  Thus \eqref{erratum:eq:IdentityWs} implies that 
  $w_{q,E}\big|_{\Gamma}^+\in H^{\frac32}(\Gamma)$ as well.

  On the other hand, let $\widetilde{H}^{\frac12}(\Gamma)$ be the
  closure of $\Dcal(\Gamma)$ in $H^{\frac12}(\Gamma)$ (see, e.g.,
  \citeerratum[p.~99]{McL00}). 
  We will construct sources $f\in H^1(E)^*$ such that 
  $L_{q,E}^*f \not\in \Rcal\bigl(
    \begin{pmatrix}
      L_{q,M}^* & L_{0,M}^*
    \end{pmatrix}\bigr)$.
  Given any $g \in \widetilde{H}^{\frac12}(\Gamma)$, we denote by 
  $\widetilde{g} \in H^{\frac12}(\di E)$ its extension to $\di E$ by
  zero. 
  Accordingly, let $u^+\in H^1_\loc(\Rd\setminus\ol{E})$ be the
  radiating solution to the exterior Dirichlet problem
  \begin{equation}
    \label{erratum:eq:ExtDirichlet}
    \Delta u^+ + k^2n^2u^+ 
    \,=\, 0 \quad \text{in $\Rd\setminus\ol{E}$} \,, \qquad
    u^+ \,=\, \widetilde{g} \quad \text{ on $\di E$} \,.
  \end{equation}
  Similarly, we define $u^-\in H^1(E)$ as the solution to the interior
  Dirichlet problem
  \begin{equation*}
    \Delta u^- 
    \,=\, 0 \quad \text{in $E$} \,, \qquad
    u^- \,=\, \widetilde{g} \quad \text{ on $\di E$} \,.
  \end{equation*}
  Therewith we introduce $u\in L^2_\loc(\Rd)$ by
  \begin{equation*}
    u \,:=\,
    \begin{cases}
      u^- &\text{in } E \,,\\
      u^+ &\text{in } \Rd\setminus\ol{E} \,,
    \end{cases}
  \end{equation*}
  and $f\in H^1(E)^*$ by
  \begin{equation*}
    f 
    \,:=\, - k^2n^2u^- 
    - \gamma^*\Bigl( \frac{\di u}{\di\nu}\Big|_{\di E}^+ 
    - \frac{\di u}{\di\nu}\Big|_{\di E}^- \Bigr) \,,
  \end{equation*}
  where $\gamma^*: H^{-\frac12}(\di E)\to H^1(E)^*$ denotes the
  adjoint of the interior trace operator 
  $\gamma: H^1(E)\to H^{\frac12}(\di E)$.
  Then $u\in H^1_\loc(\Rd)$ (see, e.g., \citeerratum[Lmm.~5.3]{Mon03}), and
  \begin{equation*}
    \Delta u + k^2n^2u
    \,=\, - f \quad \text{in $\Rd$}
  \end{equation*}
  (see, e.g., \citeerratum[Lmm.~6.9]{McL00}).
  Accordingly, $L_{q,E}^* f = \Scal_q^* u^\infty$, where 
  $u^\infty\in \Lt(\Sd)$ coincides with the far field of the radiating
  solution $u^+$ to the exterior Dirichlet
  problem~\eqref{erratum:eq:ExtDirichlet}. 
  If $\widetilde{g} \not\in H^{\frac32}(\di E)$, then our regularity
  considerations above show that
  $L_{q,E}^*f \not\in \Rcal\bigl(
    \begin{pmatrix}
      L_{q,M}^* & L_{0,M}^*
    \end{pmatrix}\bigr)$.
  
  Now let $X\tm \widetilde{H}^{\frac12}(\Gamma)$ be an infinite
  dimensional subspace of $\widetilde{H}^{\frac12}(\Gamma)$ such that 
  $X\cap H^{\frac32}(\Gamma) = \{0\}$ (e.g., the subspace
  of piecewise linear functions on $\Gamma$ that vanish on~$\di\Gamma$
  as considered in the proof of Lemma~4.6 in \protect\citeerratum{AlbGri20}).
  Let $G_E: H^{\frac12}(\Gamma)\to L^2(\Sd)$ be the operator that maps
  $g\in H^{\frac12}(\Gamma)$ to the far field pattern of the radiating
  solution~$u^+$ of \eqref{erratum:eq:ExtDirichlet}, where 
  $\widetilde{g} \in H^{\frac12}(\di E)$ is again the extension of $g$
  to $\di E$ by zero.
  Then $G_E$ is one-to-one (see, e.g., \citeerratum[Thm.~3.2]{AlbGri20}), and
  thus $Z:=\Scal_q^* G_E(X)$ is infinite dimensional. 
  Furthermore, we have just shown that
  \begin{equation*}
    Z \tm \Rcal(L_{q,E}^*) 
    \qquad\text{and}\qquad
    Z \cap \Rcal\bigl(
    \begin{pmatrix}
      L_{q,M}^* & L_{0,M}^*
    \end{pmatrix}\bigr) \,=\, \{0\} \,.
  \end{equation*}
  \mbox{}
\end{proof}

In the next lemma we quote a special case of Lemma~2.5 in
\citeerratum{erratumharrach2013monotonicity}.
\begin{lemma}
  \label{erratum:lmm:Workhorse}
  Let $X,Y$ and $Z$ be Hilbert spaces, and let $A:X\to Y$ and $B:X\to
  Z$ be bounded linear operators.  Then, 
  \begin{equation*}
    \exists C>0:\; \|Ax\|\leq C\|Bx\| \quad \forall x\in X 
    \qquad\text{if and only if}\qquad
    \Rcal(A^*)\tm\Rcal(B^*) \,.
  \end{equation*}
\end{lemma}

Now we give the proof of Theorem~\ref{erratum:thm:LocPot1}. 

\begin{proof}[Proof of Theorem~\ref{erratum:thm:LocPot1}]
  Let $V\tm\LtSd$ be a finite dimensional subspace. 
  We denote by $P_V:\LtSd\to\LtSd$ the orthogonal projection on $V$.
  Combining Lemma~\ref{erratum:lmm:LocPot2} with a simple dimensionality
  argument (see~\citeerratum[Lmm.~4.7]{erratumharrach2017monotonicity})  shows that 
  \begin{equation*}
    Z 
    \,\not\tm\, \Rcal\bigl(
    \begin{pmatrix}
      L_{q,M}^* & L_{0,M}^*
    \end{pmatrix}\bigr) + V 
    \,=\, \Rcal(
    \begin{pmatrix}
      L_{q,M}^* & L_{0,M}^* & P_V
    \end{pmatrix}) \,,
  \end{equation*}
  where $Z\tm \Rcal(L_{q,E}^*)$ denotes the subspace in
  Lemma~\ref{erratum:lmm:LocPot2}. 
  Thus,
  \begin{equation*}
    \Rcal(L_{q,E}^*) 
    \,\not\tm\, \Rcal\bigl(
    \begin{pmatrix}
      L_{q,M}^* & L_{0,M}^*
    \end{pmatrix}\bigr) + V 
    \,=\, \Rcal(
    \begin{pmatrix}
      L_{q,M}^* & L_{0,M}^* & P_V
    \end{pmatrix}) \,,
  \end{equation*}
  and accordingly Lemma~\ref{erratum:lmm:Workhorse} implies that there is no
  constant $C>0$ such that 
  \begin{equation*}
    \begin{split}
      \|L_{q,E}g\|^2_{\Lt(E)}
      &\,\leq\, C^2\biggl\|
      \begin{pmatrix}
        L_{q,M} \\ L_{0,M} \\ P_V
      \end{pmatrix}
      g\biggr\|^2_{\Lt(M)\times\Lt(M)\times\Lt(\Sd)}\\
      &\,=\, C^2\bigl( \|L_{q,M}g\|^2_{\Lt(M)} 
      + \|L_{0,M}g\|^2_{\Lt(M)} + \|P_Vg\|^2_{\Lt(\Sd)} \bigr)
    \end{split}
  \end{equation*}
  for all $g\in\LtSd$.
  Hence, there exists as sequence $(\gtilde_m)_{m\in\N}\tm\LtSd$ such
  that 
  \begin{equation*}
    \begin{split}
      \|L_{q,E}\gtilde_m\|_{\Lt(E)}&\to\infty \,,\\
      \|L_{q,M}\gtilde_m\|_{\Lt(M)} 
      + \|L_{0,M}\gtilde_m\|_{\Lt(M)} + \|P_V\gtilde_m\|_{\LtSd}&\to 0 
      \qquad\text{as } m\to\infty\,.
    \end{split}
  \end{equation*}
  Setting $g_m:=\gtilde_m-P_V\gtilde_m\in \Vperp\tm\LtSd$ for any
  $m\in\N$, we finally obtain
  \begin{equation*}
    \|L_{q,E}g_m\|_{\Lt(E)} 
    \,\geq\,
    \|L_{q,E}\gtilde_m\|_{\Lt(E)}-\|L_{q,E}\|\|P_V\gtilde_m\|_{\LtSd}
    \,\to\,\infty \qquad
    \text{as } m\to\infty \,,
  \end{equation*}
  and
  \begin{multline*}
    \|L_{q,M}g_m\|_{\Lt(M)} + \|L_{0,M}g_m\|_{\Lt(M)}
    \,\leq\, \|L_{q,M}\gtilde_m\|_{\Lt(M)} 
    + \|L_{0,M}\gtilde_m\|_{\Lt(M)}\\
    + (\|L_{q,M}\|+\|L_{0,M}\|)\|P_V\gtilde_m\|_{\LtSd} 
    \,\to\, 0 \qquad
    \text{as } m\to\infty \,.
  \end{multline*}
  Since $L_{q,E}g_m=u_{q,g_m}|_E$, $L_{q,M}g_m=u_{q,g_m}|_M$, and 
  $L_{0,M}g_m=\ui_{g_m}|_M$, this ends the proof. 
\end{proof}

\section{Correction of the statement and of the proof of Theorem~5.3 in {[3]}}
  
\label{erratum:sec:correction}

\begin{theorem}
  \label{erratum:thm:Shape3}
  Let $B,D\tm\Rd$ be open and Lipschitz bounded such that $\di D$ is
  piecewise $C^{1,1}$ smooth, and $\R^d\setminus\ol{B}$ as well as
  $\R^d\setminus\ol{D}$ are connected.
  Let ${q\in\LinftyCRd}$ with $\supp(q)=\ol{D}$, and suppose that
  $-1<\qmin\leq q\leq\qmax<\infty$ a.e.\ on $D$ for some constants
  $\qmin,\qmax\in\R$. 

  Furthermore, we assume that for any point $x\in\di D$ on the
  boundary of $D$, there exists a connected unbounded neighborhood
  $O\tm\Rd$ of $x$ such that, for $E:=O\cap D$, 
  \begin{equation}
    \label{erratum:eq:LocalDefiniteness}
    q|_E\geq\qminE>0
    \qquad\text{or}\qquad 
    q|_E\leq\qmaxE<0 
  \end{equation}
  for some constants $\qminE,\qmaxE\in\R$.
  \begin{itemize}
  \item[(a)] If $D\tm B$, then there exists a constant $C>0$ such that
    \begin{equation*}
      \alpha T_B \,\leqfin\, \real(F_q) \,\leqfin\, \beta T_B
      \quad\text{for all } \alpha \leq \min\{0,\qmin\}\,,\; 
      \beta \geq \max\{0,C\qmax\} \,.
    \end{equation*}
  \item[(b)]  If $D\not\tm B$, then
    \begin{equation*}
      \alpha T_B \,\not\leqfin\, \real(F_q) 
      \quad\text{for any } \alpha\in\R
      \quad\text{or}\quad
      \real(F_q) \,\not\leqfin\, \beta T_B
      \quad\text{for any } \beta\in\R \,.
    \end{equation*}
  \end{itemize}
\end{theorem}

\begin{remark}
  The assumptions on $B$ and $D$ as well as the 
  \emph{local definiteness assumption} \eqref{erratum:eq:LocalDefiniteness} in
  Theorem~\ref{erratum:thm:Shape3} are stronger than in the original version
  of Theorem~5.3 in \protect\citeerratum{GriHar18}. 
  \hfill$\lozenge$
\end{remark}

\begin{proof}[Proof of Theorem~\ref{erratum:thm:Shape3}]
  If $D\tm B$, then Corollary~3.4 and
  Theorem~4.5 in \protect\citeerratum{GriHar18} with $q_1=0$ and $q_2=q$ show that
  there exists a constant $C>0$ and a finite dimensional subspace
  $V\tm\LtSd$ such that, for all $g\in\Vperp$ and any
  $\beta\geq\max\{0,C\qmax\}$, 
  \begin{equation*}
    \begin{split}
      \real\biggl(\int_\Sd g \, \ol{F_qg}\ds\biggr)
      &\,\leq\, k^2 \int_D q |u_{q,g}|^2 \dx
      \,\leq\, k^2 \qmax \int_D |u_{q,g}|^2 \dx\\
      &\,\leq\, k^2 C \qmax \int_D |\ui_{g}|^2 \dx
      \,\leq\, k^2 \beta \int_B |\ui_{g}|^2 \dx \,.
    \end{split}
  \end{equation*}

  Similarly, Theorem~3.2 in \protect\citeerratum{GriHar18} with $q_1=0$ and $q_2=q$
  shows that there exists a finite dimensional subspace $V\tm\LtSd$ 
  such that, for all $g\in\Vperp$ and any $\alpha\leq\min\{0,\qmin\}$, 
  \begin{equation*}
    \begin{split}
      \real\biggl(\int_\Sd g\, \ol{F_qg}\ds\biggr)
      &\,\geq\, k^2 \int_D q|\ui_g|^2\dx 
      \,\geq\, k^2 \qmin \int_D |\ui_g|^2\dx 
      \,\geq\, k^2 \alpha \int_B |\ui_g|^2\dx \,,
    \end{split}
  \end{equation*}
  and part (a) is proven.
  
  We prove part (b) by contradiction.  
  Since $D\not\tm B$, $U:=D\setminus B$ is not empty, and there exists 
  $x\in \ol{U}\cap\di D$ as well as a connected unbounded open
  neighborhood $O\tm\Rd$ of $x$ with $O\cap D\tm U$ and 
  $O\cap B=\emptyset$, such that \eqref{erratum:eq:LocalDefiniteness} is
  satisfied with $E:=O\cap D$. 
  Furthermore, let $R>0$ be large enough such that $B,D\tm\BR$.
  Without loss of generality we assume that $O\cap\BR$, and 
  $\BR\setminus \ol{O}$ are connected. 
  
  We first assume that $q|_E\geq\qminE>0$, and that
  $\real(F_q)\leqfin\beta T_B$ for some $\beta\in\R$.
  Using the monotonicity relation (3.1) in Theorem~3.2 of
  \protect\citeerratum{GriHar18} with $q_1=0$ and $q_2=q$, we find that there exists
  a finite dimensional subspace $V\tm\LtSd$ such that, for any
  $g\in\Vperp$, 
  \begin{equation*}
    \begin{split}
      0
      &\,\geq\, \int_\Sd g(\ol{\real(F_q)g - \beta T_B g})\ds
      \,\geq\, k^2 \int_\BR (q-\beta\chi_B) |\ui_g|^2 \dx\\
      &\,=\, k^2 \int_{\BR\setminus\ol{O}} (q-\beta\chi_B) |\ui_g|^2 \dx
      + k^2 \int_{\BR\cap O} (q-\beta\chi_B) |\ui_g|^2 \dx\\
      &\,\geq\, -k^2(\|q\|_{\Linfty(\Rd)}+|\beta|)
      \int_{\BR\setminus\ol{O}} |\ui_g|^2 \dx
      + k^2 \qminE \int_{E} |\ui_g|^2 \dx \,. 
    \end{split}
  \end{equation*}
  However, this contradicts Theorem~4.1 in \protect\citeerratum{GriHar18} with $B=E$, 
  $D=\BR\setminus\ol{O}$, and $q=0$, which guarantees the existence of
  a sequence $(g_m)_{m\in\N}\tm\Vperp$ with 
  \begin{equation*}
    \int_E|\ui_{g_m}|^2\dx \to \infty
    \qquad\text{and}\qquad
    \int_{\BR\setminus\ol{O}}|\ui_{g_m}|^2\dx \to 0 
    \qquad\text{as } m\to\infty \,.
  \end{equation*}
  Consequently, $\real(F_q)\not\leqfin\beta T_B$ for all $\beta\in\R$.

  On the other hand, if $q|_{E}\leq\qmaxE<0$, and if
  $\alpha T_B\leqfin\real(F_q)$ for some $\alpha\in\R$, then the
  monotonicity relation (3.3) in Corollary~3.4 of \protect\citeerratum{GriHar18} with
  $q_1=0$ and $q_2=q$ shows that there exists a finite dimensional
  subspace $V\tm\LtSd$ such that, for any $g\in\Vperp$, 
  \begin{equation*}
    \begin{split}
      0
      &\,\leq\, \int_\Sd g(\ol{\real(F_q)g - \alpha T_B g})\ds
      \,\leq\, k^2 \int_\BR (q |u_{q,g}|^2 
      - \alpha \chi_B|\ui_g|^2) \dx\\
      &\,=\, k^2 \int_{\BR\setminus\ol{O}} 
      (q |u_{q,g}|^2 - \alpha \chi_B|\ui_g|^2) \dx
      + k^2 \int_{\BR\cap O} 
      (q |u_{q,g}|^2 - \alpha \chi_B|\ui_g|^2) \dx\\
      &\,\leq\, k^2 \qmax \int_{\BR\setminus\ol{O}} |u_{q,g}|^2 \dx
      + k^2  |\alpha| \int_{\BR\setminus\ol{O}} |\ui_g|^2 \dx
      + k^2 \qmaxE \int_{E} |u_{q,g}|^2 \dx \,.
    \end{split}
  \end{equation*}
  Let $M:=\BR\setminus\ol{O}$.
  Since $\di D$ is piecewise $C^{1,1}$ smooth, there is a connected
  subset $\Gamma\tm\di E\setminus\ol{M}$ that is relatively open and
  $C^{1,1}$ smooth.
  Applying Theorem~\ref{erratum:thm:LocPot1} we find that there exists a sequence
  $(g_m)_{m\in\N}\tm\Vperp$ such that
  \begin{equation*}
    \int_E |u_{q,g_m}|^2\dx \to \infty 
    \qquad\text{and}\qquad
    \int_{\BR\setminus\ol{O}} |u_{q,g_m}|^2 + |\ui_{g_m}|^2\dx \to 0 \qquad 
    \text{as } m\to\infty \,.
  \end{equation*}  
  However, since $\qmaxE<0$, this gives a contradiction.
  Consequently, $\alpha T_B \not\leqfin \real(F_q)$ for all
  $\alpha\in\R$, which ends the proof of part (b).
\end{proof}


\bibliographystyleerratum{abbrv}
\bibliographyerratum{literaturliste_erratum.bib}

\end{document}